\newcommand{\beq}{\begin{equation}}
\newcommand{\eeq}{\end{equation}}
\newcommand{\beqa}{\begin{eqnarray}}
\newcommand{\eeqa}{\end{eqnarray}}
\newcommand{\beqas}{\begin{eqnarray*}}
\newcommand{\eeqas}{\end{eqnarray*}}
\newcommand{\ba}{\begin{array}}
\newcommand{\ea}{\end{array}}
\newcommand{\bi}{\begin{itemize}}
\newcommand{\ei}{\end{itemize}}
\newcommand{\mcK}{{\mathcal K}}
\newcommand{\mcL}{{\mathcal L}}
\newcommand{\mcN}{{\mathcal N}}
\newcommand{\mcJ}{{\mathcal J}}
\newcommand{\mcT}{{\mathcal T}}
\newcommand{\mcI}{{\mathcal I}}
\newcommand{\cS}{{\mathcal S}}
\newcommand{\mcQ}{{\mathcal Q}}
\newcommand{\chS}{{\widehat{\mathcal{S}}}}
\newcommand{\prox}{\mathrm{prox}}
\newcommand{\dom}{\mathrm{dom}}
\newcommand{\dist}{\mathrm{dist}}
\newcommand{\argmin}{\arg\min}
\newtheorem{lemma}{Lemma}
\newtheorem{thm}{Theorem}
\newtheorem{defi}{Definition}
\newtheorem{ass}{Assumption}
\newtheorem{rem}{Remark}
\newcounter{spb}
\def\br{{\bar r}}
\def\cC{{\Omega}}
\def\cl{{\rm cl}}
\def\cN{{\cal N}}
\def\cO{{\cal O}}
\def\cU{{\cal U}}
\def\tv{{\tilde v}}
\def\tx{{\tilde x}}
\def\tg{{\tilde \gamma}}
\def\tn{{\tilde n}}
\def\wC{{\widehat C}}
\def\tL{{\widehat L}}
\def\tB{{\widehat B}}
\def\tC{{\widetilde C}}
\def\tU{{\widetilde U}}
\def\tr0{{\tilde r_0}}
\def\rr{{\mathbb{R}}}
\def\bbK{{\mathbb{K}}}
\def\bbZ{{\mathbb{Z}}}
\def\ctQ{{\widetilde \mcQ}}
\def\chQ{{\widehat \mcQ}}
\title{Accelerated first-order methods for convex optimization with locally Lipschitz continuous gradient}
\author{
Zhaosong Lu
\thanks{
Department of Industrial and Systems Engineering, University of Minnesota, USA (email: {\tt zhaosong@umn.edu}, {\tt mei00035@umn.edu}).}
\and
Sanyou Mei
\footnotemark[1]
}
\date{June 1, 2022 (Revised: April 8, 2023)}
\begin{document}
\maketitle

\begin{abstract}
In this paper we develop accelerated first-order methods for convex optimization with \emph{locally Lipschitz} continuous gradient (LLCG), which is beyond the well-studied class of convex optimization with  
Lipschitz continuous gradient. In particular, we first consider unconstrained convex optimization with LLCG and propose accelerated proximal gradient (APG) methods for solving it. The proposed APG methods are equipped with a verifiable termination criterion and enjoy an operation complexity of $\cO(\varepsilon^{-1/2}\log \varepsilon^{-1})$ and $\cO(\log \varepsilon^{-1})$ for finding an $\varepsilon$-residual solution of an unconstrained convex and strongly convex optimization problem, respectively. We then consider constrained convex optimization with LLCG and propose an first-order proximal augmented Lagrangian method for solving it by applying one of our proposed APG methods to approximately solve a sequence of proximal augmented Lagrangian subproblems. The resulting method is equipped with a verifiable termination criterion and enjoys an operation complexity of $\cO(\varepsilon^{-1}\log \varepsilon^{-1})$ and $\cO(\varepsilon^{-1/2}\log \varepsilon^{-1})$ for finding an $\varepsilon$-KKT solution of a constrained convex and strongly convex optimization problem, respectively. All the proposed methods in this paper are \emph{parameter-free} or \emph{almost parameter-free} except that the knowledge on convexity parameter is required. In addition, preliminary numerical results are presented to demonstrate the performance of our proposed methods. To the best of our knowledge, no prior studies were conducted to investigate accelerated first-order methods with complexity guarantees for convex optimization with LLCG.   All the complexity results obtained in this paper are new.
\end{abstract}

\noindent {\bf Keywords:} Convex optimization, locally Lipschitz continuous gradient, proximal gradient method, proximal augmented Lagrangian method, accelerated first-order methods, iteration complexity, operation complexity

\medskip

\noindent {\bf Mathematics Subject Classification:} 90C25, 90C30, 90C46, 49M37

\section{Introduction}

In this paper we first consider unconstrained convex optimization\footnote{We refer to problem \eqref{unc-prob} as an unconstrained optimization problem just for convenience. Strictly speaking, it can be a constrained optimization problem. For example, when $P$ is the indicator function of a closed convex set, it reduces to the problem of minimizing $f$ over this set.}
\begin{equation}\label{unc-prob}
F^*=\min_{x} \ \{F(x)\coloneqq f(x)+ P(x)\}, 
\end{equation}
where $F^*\in \rr$, $f,  P:\rr^n\to(-\infty,\infty]$ are proper closed convex functions, $f$ is differentiable on $\cl(\dom(P))$, and $\nabla f$ is \emph{locally Lipschitz} continuous\footnote{See Subsection \ref{notation} for the definition of locally Lipschitz continuity.} on $\cl(\dom(P))$, where 
$\dom(P)$ denotes the domain of $P$ and $\cl(\dom(P))$ denotes 
its closure.  It shall be mentioned that $\dom(P)$ is possibly \emph{unbounded}.
Problem \eqref{unc-prob} is beyond the well-studied class of problems in the form of \eqref{unc-prob} yet with $\nabla f$ being (\emph{globally}) \emph{Lipschitz} continuous on $\cl(\dom(P))$ or $\rr^n$.  For example, the problem of minimizing a convex high-degree polynomial function over a closed unbounded convex set is a special case of \eqref{unc-prob}, but it does not belong to the latter class in general. In addition, it is sometimes easier to verify 
local Lipschitz continuity than Lipschitz continuity of $\nabla f$ on $\cl(\dom(P))$. For example, when $f$ is twice differentiable in an open set containing $\cl(\dom(P))$, it is straightforward to see that 
$\nabla f$ is locally Lipschitz continuous on $\cl(\dom(P))$; however, verifying Lipschitz continuity of $\nabla f$ may require exploring the expression of $\nabla f$ and can be a nontrivial task.   

The well-known special case of problem \eqref{unc-prob} with $\nabla f$ being 
 \emph{Lipschitz} continuous on $\cl(\dom(P))$ or $\rr^n$ has been extensively studied in the literature. In particular, accelerated proximal gradient (APG) methods \cite{BecTeb09,Nest13} and their variants \cite{FeRiTa15,LiLuXi14,Tse08} were proposed for solving it. From theoretical perspective, these methods enjoy an optimal iteration complexity of $\cO(\varepsilon^{-1/2})$  for finding an $\varepsilon$-gap solution of \eqref{unc-prob}, namely, a point $x$ satisfying $F(x)-F^* \leq\varepsilon$. However, since $F^*$ is typically unknown, there is a lack of a verifiable termination criterion for them to find an $\varepsilon$-gap solution of \eqref{unc-prob} in general. To overcome this issue, a nearly optimal proximal gradient method was recently proposed in \cite{ito2021nearly} for solving such a special case of \eqref{unc-prob}. This method is equipped with a verifiable termination criterion based on the norm of a gradient mapping of \eqref{unc-prob} and enjoys an iteration complexity of $\cO(\varepsilon^{-1/2}\log \varepsilon^{-1})$ for finding an $\varepsilon$-norm solution of \eqref{unc-prob}, namely, a point  at which the norm of a gradient mapping of \eqref{unc-prob} is no more than $\varepsilon$. It shall be mentioned that these methods \cite{BecTeb09,FeRiTa15,ito2021nearly,LiLuXi14,Nest13,Tse08} and their analysis rely on the \emph{Lipschitz} continuity of $\nabla f$ on $\cl(\dom(P))$ or $\rr^n$.  
Indeed, they require either an explicitly known global Lipschitz constant of $\nabla f$ \cite{FeRiTa15,LiLuXi14,Tse08} or an estimated one obtained by a backtracking line search scheme \cite{BecTeb09,ito2021nearly,Nest13}. When $\nabla f$ is merely locally Lipschitz continuous, a global Lipschitz constant of $\nabla f$ clearly does not exist and also the sequence of estimated Lipschitz constants in \cite{BecTeb09,ito2021nearly,Nest13} can blow up because the solution sequence is possibly unbounded. If the latter case occurs, the methods may not converge and the complexity analysis of the methods in \cite{BecTeb09,ito2021nearly,Nest13} will no longer hold. As a result, these methods are not applicable to \eqref{unc-prob} or lack complexity guarantees in general when $\nabla f$ is merely \emph{locally Lipschitz} continuous on $\cl(\dom(P))$. 

To handle the challenge of the local Lipschitz continuity of $\nabla f$, we modify \cite[Algorithm 1 with a single block]{LiLuXi14} by incorporating a backtracking line search scheme and an adaptive update strategy on the algorithm parameters to propose an APG method (see Algorithm~\ref{alg-acc}) for solving problem \eqref{unc-prob}. Interestingly, the solution sequence and the sequence of estimated  (local) Lipschitz constants obtained by the proposed APG method can be proved to be bounded, which overcome the aforementioned issues of the methods in \cite{BecTeb09,ito2021nearly,Nest13}. 
Moreover, this method is shown to enjoy a nice iteration complexity of $\cO(\varepsilon^{-1/2})$ and $\cO(\log\varepsilon^{-1})$ for finding an $\varepsilon$-gap solution of \eqref{unc-prob} when $f$ is convex and strongly convex, respectively. Yet, since $F^*$ is typically unknown, it is difficult to come up with a verifiable termination criterion for this method to find an $\varepsilon$-gap solution of \eqref{unc-prob}. To circumvent this issue, we further propose an APG method with a \emph{verifiable} termination criterion (see Algorithm~\ref{alg-acc-term}) for \eqref{unc-prob} with a \emph{strongly convex} $f$, and show that it enjoys an iteration and operation complexity\footnote{The operation complexity of a proximal gradient method for problem \eqref{unc-prob} is measured by the amount of its fundamental operations consisting of evaluations of $\nabla f$ and proximal operator of $P$.}  of $\cO(\log \varepsilon^{-1})$ for finding an $\varepsilon$-residual solution of \eqref{unc-prob}, namely, a point $x$ satisfying $\dist(0,\partial F(x))\leq\varepsilon$.\footnote{$\dist(z,\Omega)=\min_y\{\|z-y\|: y\in\Omega\}$ for any $z\in\rr^n$ and closed set $\Omega\subseteq \rr^n$. In addition, an $\varepsilon$-residual solution $x$ of \eqref{unc-prob} satisfying $\|x\|\leq \Delta$ for some $\Delta>0$ independent on $\varepsilon$ is an $\cO(\varepsilon)$-gap solution, because $F(x)-F^* \leq \|x-x^*\| \dist(0,\partial F(x)) \leq (\Delta+\|x^*\|)\varepsilon$ for any optimal solution $x^*$ of \eqref{unc-prob}. However, the converse may not be true.} We also propose an APG method with a \emph{verifiable} termination criterion (see Algorithm \ref{PPA-sp}) for \eqref{unc-prob} with a \emph{convex but non-strongly convex} $f$ by applying Algorithm~\ref{alg-acc-term} to a sequence of strongly convex optimization problems arising from a perturbation of \eqref{unc-prob}, and show that it enjoys an operation complexity of $\cO(\varepsilon^{-1/2}\log \varepsilon^{-1})$ for finding an $\varepsilon$-residual solution of \eqref{unc-prob}. All the proposed APG methods are \emph{parameter-free} or \emph{almost parameter-free} except that the knowledge on convexity parameter of $f$ is required.

Secondly, we consider constrained convex optimization in the form of
\begin{equation}\label{conic-p}
\begin{array}{rl}
\bar F^*=\min & \left\{F(x) := f(x) +  P(x)\right\} \\
\mbox{s.t.} &  -g(x)\in \mcK,
\end{array}
\end{equation}
where $\mcK\subseteq \rr^m$ is a closed convex cone, $f,P:\rr^n\to(-\infty,\infty]$ are proper closed convex functions, $f$ and $g$ are differentiable on $\cl(\dom(P))$, $\nabla f$ and $\nabla g$ are \emph{locally Lipschitz} continuous on $\cl(\dom(P))$, and $g$ is $\mcK$-convex, that is, 
\begin{equation*}
 \alpha g(x) + (1-\alpha)g(y) - g(\alpha x + (1-\alpha)y) \in \mcK, \quad\forall x,y\in\rr^n, \; \alpha\in[0,1].
\end{equation*}
 It shall be mentioned that $\dom(P)$ is possibly \emph{unbounded}. 
 
Problem \eqref{conic-p} includes a rich class of problems as a special case. For example, when $\mathcal{K} = \rr_{+}^{m_1}\times\{0\}^{m_2}$ for some $m_1$ and $m_2$, $g(x) = (g_1(x),\ldots,g_{m_1}(x), h_1(x),\ldots,h_{m_2}(x))^T$ with convex $g_i$'s and affine $h_j$'s, and $P(x)$ is the indicator function of a simple convex set $X\subseteq\rr^n$, problem \eqref{conic-p} reduces to an ordinary convex optimization problem
\[
\begin{array}{rl}
\min\limits_{x \in X} \{f(x): g_i(x) \leq 0, \ i=1,\ldots,m_1; h_j(x) = 0, \ j=1,\ldots,m_2\}.
\end{array}
\]

Numerous first-order methods were developed for solving some special cases of \eqref{conic-p} in the literature. For example, a variant of Tseng's modified forward-backward splitting method was proposed in \cite{monteiro2011complexity} for \eqref{conic-p} with $g$ being an affine map, $\mcK=\{0\}^m$, and $\nabla f$ being \emph{Lipschitz} continuous on $\cl(\dom(P))$. Also, first-order penalty methods were proposed in \cite{LaMo13} for \eqref{conic-p} with $g$ being an affine map, $P$ being the indicator function of a simple \emph{compact} convex set, and $\nabla f$ being \emph{Lipschitz} continuous on this set. In addition, first-order augmented Lagrangian (AL) methods were developed in \cite{AyGa13,NePaGl19} for \eqref{conic-p} with $g$ being an affine map, $P$ having a \emph{bounded} domain or being the indicator function of a simple \emph{compact} convex set, and $\nabla f$ being \emph{Lipschitz} continuous on $\rr^n$. Also, first-order AL methods were proposed in \cite{Lan16,LLM17,PNT17} with $\mcK = \{0\}^m$, $g$ being an affine map, $P$ having \emph{bounded} domain or being the indicator function of a simple \emph{compact} convex set, and $\nabla f$ being \emph{Lipschitz} continuous on this set or $\rr^n$. For these special cases,  first-order iteration complexity was established for the methods  \cite{AyGa13,NePaGl19,PNT17} for finding an $\varepsilon$-gap solution\footnote{An $\varepsilon$-gap solution of 
problem \eqref{conic-p} is a point $x$ satisfying $|F(x) - \bar F^*| \leq \varepsilon$ and $\dist(g(x),-\mcK) \leq \varepsilon$.} of \eqref{conic-p} and for the methods  \cite{LaMo13,Lan16,monteiro2011complexity} for finding an $\varepsilon$-KKT type solution, which is similar to the one introduced in Definition \ref{approx-KKT-soln} in Section \ref{alg-constr}. Since $F^*$ is typically unknown, there is a lack of a verifiable termination criterion 
for the methods \cite{AyGa13,NePaGl19,PNT17} to find an $\varepsilon$-gap solution of 
\eqref{conic-p} in general. In contrast, $\varepsilon$-KKT type of solutions can generally be verified and the methods  \cite{LaMo13,Lan16,monteiro2011complexity} are equipped with a usually verifiable termination criterion for finding an $\varepsilon$-KKT type solution of the aforementioned special cases of \eqref{conic-p}.   

In addition to the above methods,  a first-order proximal AL method was recently proposed in \cite[Algorithm 2]{lu2018iteration} for solving a special case of problem \eqref{conic-p} with $P$ having a \emph{compact} domain and $\nabla f$ and $\nabla g$ being \emph{Lipschitz} continuous on $\dom(P)$. At each iteration, this method applies a variant of Nesterov's optimal first-order method \cite[Algorithm 3]{lu2018iteration} to approximately solve a proximal AL subproblem and then updates the Lagrangian multiplier by a classical scheme. This method enjoys two nice features: (i) it is equipped with a verifiable termination criterion; (ii) it achieves a best-known operation complexity of  $\cO(\varepsilon^{-1}\log \varepsilon^{-1})$ for finding an $\varepsilon$-KKT solution\footnote{An $\varepsilon$-KKT solution of \eqref{conic-p} is generally an $\cO(\varepsilon)$-gap solution of \eqref{conic-p} (see Theorems 3 and 6 of \cite{lu2018iteration}). However, the converse may not be true.} of such a special case of \eqref{conic-p}. 


It shall be mentioned that the aforementioned methods in \cite{AyGa13,LaMo13,Lan16,LLM17,lu2018iteration,monteiro2011complexity,NePaGl19,PNT17}   
 and their analysis rely on \emph{boundedness} of $\dom(P)$ and/or \emph{Lipschitz} continuity of $\nabla f$ and $\nabla g$ on $\cl(\dom(P))$ or $\rr^n$. Indeed, these methods use the APG method \cite{Nest13} or its variant as a subproblem solver. Based on the above discussion, such a subproblem solver is not applicable or lacks complexity guarantees in general when $\dom(P)$ is unbounded or $\nabla f$ and $\nabla g$ are merely locally Lipschitz continuous on $\cl(\dom(P))$, because the gradient of the smooth component in the objective function of the subproblems is merely locally Lipschitz continuous. As a result,  these methods are not applicable or lack complexity guarantees  in general when $\dom(P)$ is \emph{unbounded} or $\nabla f$ and $\nabla g$ are merely \emph{locally Lipschitz} continuous on $\cl(\dom(P))$.

In this paper we propose a first-order proximal AL method for solving problem \eqref{conic-p} by following the same framework as \cite[Algorithm 2]{lu2018iteration} except that the proximal AL subproblems are approximately solved by our APG method, namely, Algorithm~\ref{alg-acc-term}. Though the gradient of the smooth component in the objective function of these subproblems is merely locally Lipschitz continuous,  their approximate solutions can be found by our APG method with complexity guarantees. As a result, our  first-order proximal AL method overcomes the aforementioned issue faced by the methods in \cite{AyGa13,LaMo13,Lan16,LLM17,lu2018iteration,monteiro2011complexity,NePaGl19,PNT17}.  
Besides, our method is equipped with a \emph{verifiable} termination criterion and  \emph{almost parameter-free} except that the knowledge on convexity parameter of $f$ is required. Moreover, we show that it achieves an operation complexity of  $\cO(\varepsilon^{-1}\log \varepsilon^{-1})$ and $\cO(\varepsilon^{-1/2}\log \varepsilon^{-1})$ for finding an $\varepsilon$-KKT solution of \eqref{conic-p} when $f$ is convex and strongly convex, respectively. 

The main contributions of our paper are summarized as follows.

\bi
\item 
We propose and analyze APG methods for solving problem \eqref{unc-prob} under \emph{local Lipschitz} continuity of $\nabla f$ on $\cl(\dom(P))$ for the first time. Our proposed methods are almost parameter-free, equipped with a verifiable termination criterion, and enjoy 
an operation complexity of $\cO(\varepsilon^{-1/2}\log \varepsilon^{-1})$ and $\cO(\log \varepsilon^{-1})$ for finding an $\varepsilon$-residual solution of \eqref{unc-prob} when $f$ is convex and strongly convex, respectively.
\item 
We propose and analyze a first-order proximal AL method for solving problem \eqref{conic-p} under \emph{local Lipschitz} continuity of $\nabla f$ and $\nabla g$ on $\cl(\dom(P))$ and possible \emph{unboundedness} of $\dom(P)$ for the first time. Our proposed method is almost parameter-free, equipped with a verifiable termination criterion, and enjoys 
an operation complexity of $\cO(\varepsilon^{-1}\log \varepsilon^{-1})$ and $\cO(\varepsilon^{-1/2}\log \varepsilon^{-1})$ for finding an $\varepsilon$-KKT solution of \eqref{conic-p} when $f$ is convex and strongly convex, respectively. 
\ei

The rest of this paper is organized as follows. In Subsection \ref{notation} we introduce some notation and terminology. In Section~\ref{alg-unconstr} we propose accelerated proximal gradient methods for problem \eqref{unc-prob} and study their worst-case complexity. In Section~\ref{alg-constr} we propose a first-order proximal augmented Lagrangian method for problem \eqref{conic-p} and study its worst-case complexity. In addition, we present some preliminary numerical results and the proofs of the main results in Sections \ref{sec:exp} and \ref{sec:proof}. 
Finally, we make some concluding remarks in Section \ref{sec:conclude}.

\subsection{Notation and terminology} \label{notation}
The following notation will be used throughout this paper. Let $\rr^n$ denote the Euclidean space of dimension $n$, $\langle\cdot,\cdot\rangle$ denote the standard inner product, and $\|\cdot\|$ stand for the Euclidean norm or its induced matrix norm. For any $\omega\in\rr$, let $\omega_+=\max\{\omega,0\}$ and $\lceil \omega \rceil$ denote the least integer number greater than or equal to $\omega$. Let $\bbZ_+$ denote the set of positive integers. For any $t,M\in\bbZ_+$, ${\rm mod}(t,M)$ denotes the remainder of $t$ when divided by $ M$.

For a closed convex function $P:\rr^n\rightarrow(-\infty,\infty]$, let $\partial P$ and $\dom(P)$ denote the subdifferential and domain of $P$, respectively.  The proximal operator associated with $P$ is denoted by  
$\prox_P$,  that is,
\begin{equation*}
\label{eq:def-prox}
\prox_P(z) = \argmin_{x\in\rr^n} \left\{ \frac{1}{2}\|x - z\|^2 + P(x) \right\} \quad \forall z \in \rr^n.
\end{equation*}
Since evaluation of $\prox_{\gamma P}(z)$ is often as cheap as that of $\prox_P(z)$, we count evaluation of $\prox_{\gamma P}(z)$ as one evaluation of proximal operator of $P$ for any $\gamma>0$ and $z\in\rr^n$. For a mapping $h:\rr^n \to \rr^l$, $\nabla h$ denotes the transpose of the Jacobian of $h$. $\nabla h$ is called $L$-Lipschitz continuous on a set $\Omega$ for some constant $L>0$ if $\|\nabla h(x)-\nabla h(y)\| \le L\|x-y\|$ for all $x,y\in\Omega$. In addition, $\nabla h$ is called \emph{locally Lipschitz continuous} on $\Omega$ if for any $x\in\Omega$, there exist $L_x>0$ and an open set $\cU_x$ containing $x$ such that $\nabla h$ is $L_x$-Lipschitz continuous on $\cU_x$. 

Given a nonempty closed convex set $\cC\subseteq\rr^n$, $\dist(x,\cC)$ stands for the Euclidean distance from $x$ to $\cC$, and $\Pi_\cC(x)$ denotes the Euclidean projection of $x$ onto $C$.
The normal cone of $\cC$ at any $x\in \cC$ is denoted by $\mcN_\cC(x)$. For a closed convex cone $\mcK\subseteq \rr^m$, we use $\mcK^*$ to denote the dual cone of $\mcK$, that is,  $\mcK^* = \{y\in\rr^m: \langle y,x\rangle\geq 0, \; \forall x\in\mcK\}$.

\section{Accelerated proximal gradient methods for unconstrained convex optimization}
\label{alg-unconstr}


In this section we consider problem \eqref{unc-prob} and propose accelerated proximal gradient (APG) methods for solving it. In particular, we aim to find an \emph{$\epsilon$-residual solution} of \eqref{unc-prob}, which is defined below.

\begin{defi} \label{approx-residual-soln}
Given any $\epsilon>0$, we say $x\in\rr^n$ is an $\epsilon$-residual solution of problem \eqref{unc-prob} if it satisfies $\dist(0,\partial F(x))\leq \epsilon$.
\end{defi}

To proceed, let $\mu\geq0$ denote the  \emph{convexity parameter} of $f$ on $\dom( P)$, that is, 
\begin{equation}\label{convex-1}
f(y)\geq f(x)+\langle\nabla f(x), y-x\rangle+\frac{\mu}{2}\|x-y\|^2,\quad\forall x\in \dom( P), y\in \rr^n.
\end{equation}
Clearly, $f$ is strongly convex on $\dom( P)$ when $\mu>0$. 
In addition, we assume that the proximal operator associated with $P$ can be exactly evaluated and problem \eqref{unc-prob} has at least one optimal solution. Let $x^*$ be an arbitrary optimal solution of \eqref{unc-prob} and fixed throughout this section.

\subsection{An APG method without a termination criterion for problem \eqref{unc-prob}}
\label{alg-unconstr1}

We propose an APG method for \eqref{unc-prob} as follows, which is a modification of \cite[Algorithm 1 with a single block]{LiLuXi14}  by incorporating a backtracking line search scheme and an adaptive update strategy on the algorithm parameters. 

\begin{algorithm}[H]
\caption{An APG method without a termination criterion for problem \eqref{unc-prob}}
\label{alg-acc}
\begin{algorithmic}[1]
\REQUIRE  $\gamma_0\in(0,1/\mu]$,\footnote{} $0<\alpha_0\in[\sqrt{\mu\gamma_0},1]$, $\delta\in(0,1)$, and $x^1=z^1\in \dom( P)$.
\FOR{$t=1,2,\dots$}
\STATE Compute
\begin{align}
\label{unc-iter}
&y^t=\left((1-\alpha_t)x^t+\alpha_t(1-\beta_t)z^t\right)/(1-\alpha_t \beta_t),\\
\label{unc-prox}
&z^{t+1}=\argmin_{x}\left\{\gamma_t[\langle\nabla f(y^t),x\rangle+ P(x)]+\frac{\alpha_t}{2}\|x-\beta_ty^t-(1-\beta_t)z^t\|^2\right\},\\
\label{unc-updx}
&x^{t+1}=(1-\alpha_t)x^t+\alpha_tz^{t+1},
\end{align}
where $\gamma_t=\gamma_0\delta^{n_t}$ and $\beta_t=\mu\gamma_t\alpha_t^{-1}$ with $\alpha_t\in(0,1]$ being the solution of
\begin{equation}\label{unc-equa}
\gamma_{t-1}\alpha_t^2=(1-\alpha_t)\alpha_{t-1}^2\gamma_t+\mu\alpha_t\gamma_t\gamma_{t-1},
\end{equation}
and $n_t$ being the smallest non-negative integer such that
\begin{equation}\label{unc-line}
2\gamma_t\left(f(x^{t+1})-f(y^t)-\langle\nabla f(y^t),x^{t+1}-y^t\rangle\right)\leq\|x^{t+1}-y^t\|^2.
\end{equation}
\ENDFOR
\end{algorithmic}
\end{algorithm}
\footnotetext{By convention, we define $1/0=\infty$. Consequently, when $\mu=0$, $\gamma_0$ can be any positive number.}

\begin{rem}
(i) Algorithm \ref{alg-acc} is almost parameter-free except that the convexity parameter $\mu$ of $f$ is required.

(ii) 
One can observe that the fundamental operations of Algorithm \ref{alg-acc} consist of evaluations of $\nabla f$ and proximal operator of $P$. Specifically, at iteration $t$, Algorithm \ref{alg-acc} requires $n_t+1$ evaluations of $\nabla f$ and proximal operator of $P$ for finding $x^{t+1}$ satisfying \eqref{unc-line}.  

(iii) Notice from Algorithm \ref{alg-acc} that $0<\alpha_0\in[\sqrt{\mu\gamma_0},1]$, which implies $\alpha_0\in(0,1]$ regardless of $\mu=0$ or $\mu>0$.
Suppose that $\alpha_{t-1}\in(0,1]$ and $\gamma_{t-1}, \gamma_t\in (0, \gamma_0]$ are given  for some $t \geq 1$. Then $\alpha_t\in(0,1]$ is well defined by the equation \eqref{unc-equa}. Indeed, let  
$\phi(\alpha)=\gamma_{t-1}\alpha^2-(1-\alpha)\alpha_{t-1}^2\gamma_t-\mu\alpha\gamma_t\gamma_{t-1}$.
Observe that $\phi(0)=-\alpha_{t-1}^2\gamma_t<0$ and $\phi(1)=\gamma_{t-1}(1-\mu\gamma_t)\geq\gamma_{t-1}(1-\mu\gamma_0)\geq0$ due to $\gamma_0\in(0,1/\mu]$. Hence, \eqref{unc-equa} has a solution in $(0,1]$ and $\alpha_t$ is well-defined.
\end{rem}

We next study \emph{well-definedness} of Algorithm \ref{alg-acc} and also its \emph{convergence rate} in terms of $F(x^t)-F(x^*)$. 
To proceed,  we define
\begin{equation}\label{def-S}
 r_0=\sqrt{F(x^1)-F(x^*)+\frac{\alpha^2_{0}}{2\gamma_{0}}\|x^1-x^*\|^2},\;\;\cS=\left\{x\in \dom( P):\|x-x^*\| \leq\frac{\sqrt{2\gamma_0} r_0}{\alpha_0}\right\}.
\end{equation}

The following lemma establishes that $\nabla f$ is \emph{Lipschitz} continuous on $\cS$ and also on an enlarged set induced by $\alpha_0$, $\gamma_0$, $r_0$, $x^*$, 
 $f$ and $\cS$, albeit $\nabla f$ is \emph{locally Lipschitz} continuous on $\cl(\dom(P))$. This result will play an important role in this section.

\begin{lemma} \label{F-Lipschitz}
Let $r_0$ and $\cS$ be defined in \eqref{def-S}, and let $\gamma_0$ and $\alpha_0$ be the input parameters of Algorithm \ref{alg-acc}. Then the following statements hold.
\begin{enumerate}[label=(\roman*)]
\item $\nabla f$ is $L_\cS$-Lipschitz continuous on $\cS$ for some constant $L_\cS>0$.
\item $\nabla f$ is $L_\chS$-Lipschitz continuous on $\chS$ for some constant $L_\chS>0$, where
\begin{equation}\label{def-hS}
\chS=\left\{x\in \dom( P):\|x-x^*\| \leq\left(1+\gamma_0L_\cS\right)\frac{\sqrt{2\gamma_0} r_0}{\alpha_0}\right\}.
\end{equation}
 \end{enumerate}
\end{lemma}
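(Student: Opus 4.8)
The plan is to prove both statements as consequences of a single observation: local Lipschitz continuity of $\nabla f$ on $\cl(\dom(P))$ upgrades to genuine Lipschitz continuity on any bounded subset of $\cl(\dom(P))$, and both $\cS$ and $\chS$ are bounded subsets of $\dom(P)\subseteq\cl(\dom(P))$. So the first step is to recall the standard compactness argument. Fix a bounded convex set $\Omega\subseteq\cl(\dom(P))$; its closure $\overline{\Omega}$ is compact. For each $x\in\overline{\Omega}$, local Lipschitz continuity gives an open ball $\cU_x$ and a constant $L_x$ with $\nabla f$ being $L_x$-Lipschitz on $\cU_x\cap\cl(\dom(P))$ (more precisely on $\cU_x$, in the sense of the definition in Section~\ref{notation}). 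The collection $\{\cU_x\}_{x\in\overline{\Omega}}$ is an open cover of the compact set $\overline{\Omega}$; extract a finite subcover $\cU_{x_1},\dots,\cU_{x_k}$, and let $\lambda>0$ be a Lebesgue number for this subcover restricted to $\overline{\Omega}$, i.e.\ every subset of $\overline{\Omega}$ of diameter less than $\lambda$ lies inside some $\cU_{x_i}$. Set $L_0=\max_i L_{x_i}$ and $B=\sup\{\|\nabla f(x)\|:x\in\overline{\Omega}\}$, which is finite since $\nabla f$ is continuous on the compact set $\overline{\Omega}$.

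The second step is the chaining argument that converts the local bound into a global one on $\Omega$. Take arbitrary $x,y\in\Omega$. If $\|x-y\|<\lambda$, then $\{x,y\}$ lies in some $\cU_{x_i}$, so $\|\nabla f(x)-\nabla f(y)\|\le L_0\|x-y\|$. If $\|x-y\|\ge\lambda$, then simply $\|\nabla f(x)-\nabla f(y)\|\le 2B\le (2B/\lambda)\|x-y\|$. Hence $\nabla f$ is $L_\Omega$-Lipschitz on $\Omega$ with $L_\Omega=\max\{L_0,2B/\lambda\}$. Applying this with $\Omega=\cS$ yields statement~(i) with $L_\cS=L_\Omega$; note $\cS\subseteq\dom(P)$ is bounded by its very definition in \eqref{def-S} (a closed ball of radius $\sqrt{2\gamma_0}\,r_0/\alpha_0$ intersected with $\dom(P)$), and $r_0$ is a finite constant since \eqref{unc-prob} has an optimal solution $x^*$. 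Then applying the same lemma (or re-running the argument) with $\Omega=\chS$ — which, given the now-fixed constant $L_\cS$, is again a bounded subset of $\dom(P)$ by \eqref{def-hS} — yields statement~(ii) with the corresponding constant $L_\chS$. It is worth stating the bounded-set upgrade as an auxiliary claim first and then invoking it twice, to avoid repetition.

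The only subtlety, and what I expect to be the main (minor) obstacle, is a bookkeeping point about where the Lipschitz estimate is required to hold. The definition in Section~\ref{notation} gives, for each $x\in\Omega$, an open set $\cU_x$ on which $\nabla f$ is $L_x$-Lipschitz — but membership $x\in\Omega$ only guarantees $x\in\cl(\dom(P))$, and one must be slightly careful that the finite subcover covers the compact closure $\overline{\Omega}$ rather than just $\Omega$; since $\overline{\Omega}\subseteq\cl(\dom(P))$ (as $\cl(\dom(P))$ is closed and contains $\Omega$), the hypothesis applies at every point of $\overline{\Omega}$, so this is fine. One should also make sure the Lebesgue number is taken with respect to $\overline{\Omega}$ so that pairs $x,y\in\Omega$ with $\|x-y\|<\lambda$ are genuinely captured. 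Everything else is routine: no convexity of $f$, no properties of the algorithm iterates, and no use of \eqref{convex-1} are needed — the lemma is purely a statement about $\nabla f$ on bounded sets, and the specific forms of $\cS$ and $\chS$ enter only through their boundedness.
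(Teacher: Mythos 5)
Your proposal is correct and matches the paper's proof, which simply asserts that local Lipschitz continuity of $\nabla f$ on $\cl(\dom(P))$ together with boundedness (and convexity) of $\cS$ and $\chS$ yields Lipschitz continuity on these sets; you have merely supplied the standard compactness/Lebesgue-number details that the paper leaves as "not hard to observe." A minor bonus of your chaining-plus-crude-bound argument is that it does not use convexity of $\cS$ or $\chS$ at all, whereas the paper's one-line proof implicitly invokes it.
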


\begin{proof}
Notice that $\cS$ is a convex and bounded subset in $\dom( P)$. By this and the local Lipschitz continuity of $\nabla f$ on $\cl(\dom( P))$, it is not hard to observe that there exists some constant  $L_\cS>0$ such that $\nabla f$ is $L_\cS$-Lipschitz continuous on $\cS$. Hence, statement (i) holds and moreover the set $\chS$ is well-defined. By a similar argument, one can see that statement (ii) also holds.
\end{proof}

The following theorem shows that Algorithm \ref{alg-acc} is \emph{well-defined} at each iteration.  Its proof is deferred to Subsection \ref{sec:proof1}.

\begin{thm}\label{inner} 
Algorithm~\ref{alg-acc} is well-defined at each iteration. Moreover, $x^t,y^t, z^t\in\cS$ and  $n_t\leq N$ for all $t\geq 1$, where $\cS$ is defined in \eqref{def-S} and
\begin{equation}\label{def-N}
N=\left\lceil\frac{\log(\gamma_0L_\chS)}{\log(1/\delta)}\right\rceil_+.
\end{equation}
\end{thm}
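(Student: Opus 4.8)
The plan is to prove the three assertions --- well-definedness, the inclusion $x^t,y^t,z^t\in\cS$, and the bound $n_t\le N$ --- simultaneously by induction on $t$, since they are intertwined: the line search terminates only because the iterates stay in a region where $\nabla f$ is Lipschitz, and the iterates stay in that region only because the algorithm maintains a descent-type estimate sequence that is itself available only once the line search has succeeded. So the induction hypothesis at step $t$ should bundle: (a) $x^s,y^s,z^s\in\cS$ for $s\le t$; (b) the line search at step $s<t$ terminated with $n_s\le N$; and (c) a suitable ``potential'' inequality, something of the form $F(x^t)-F(x^*)+\tfrac{\alpha_{t-1}^2}{2\gamma_{t-1}}\|z^t-x^*\|^2\le r_0^2$, which is exactly the quantity under the square root in \eqref{def-S} at $t=1$.

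First I would record the consequences of Lemma~\ref{F-Lipschitz}: $\nabla f$ is $L_\cS$-Lipschitz on $\cS$ and $L_\chS$-Lipschitz on the enlarged set $\chS$. The key observation making the line search terminate is that once $y^t\in\cS$, the point $x^{t+1}=(1-\alpha_t)x^t+\alpha_t z^{t+1}$ lies in $\chS$ as soon as $\gamma_t$ is small enough; more precisely, from the prox step \eqref{unc-prox} one gets a bound on $\|z^{t+1}-x^*\|$ (hence on $\|x^{t+1}-y^t\|$) in terms of $\gamma_t\|\nabla f(y^t)\|$ and the radius of $\cS$, and the factor $(1+\gamma_0 L_\cS)$ in \eqref{def-hS} is chosen to absorb precisely this. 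Then, for any $x^{t+1}\in\chS$ and $y^t\in\cS\subseteq\chS$, the standard descent inequality gives $f(x^{t+1})-f(y^t)-\langle\nabla f(y^t),x^{t+1}-y^t\rangle\le \tfrac{L_\chS}{2}\|x^{t+1}-y^t\|^2$, so \eqref{unc-line} holds whenever $\gamma_t\le 1/L_\chS$, i.e. whenever $\delta^{n_t}\le 1/(\gamma_0 L_\chS)$, which is guaranteed once $n_t\ge \log(\gamma_0 L_\chS)/\log(1/\delta)$; taking the positive part and the ceiling yields $n_t\le N$. This also shows $\gamma_t\in(0,\gamma_0]$, so by Remark (iii) the parameter $\alpha_t$ is well defined, closing the well-definedness part.

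Next I would propagate the potential inequality, which is the real workhorse and essentially the content of the analysis in \cite{LiLuXi14} adapted to the backtracking/adaptive-$\gamma_t$ setting. Using convexity of $f$ via \eqref{convex-1}, the definition of $z^{t+1}$ as the minimizer of a strongly convex model, the line-search inequality \eqref{unc-line}, and the algebraic identity \eqref{unc-equa} relating $\alpha_t,\alpha_{t-1},\gamma_t,\gamma_{t-1}$ and $\beta_t=\mu\gamma_t/\alpha_t$, one derives the one-step estimate-sequence recursion
\begin{equation*}
F(x^{t+1})-F(x^*)+\frac{\alpha_t^2}{2\gamma_t}\|z^{t+1}-x^*\|^2\le (1-\alpha_t)\left(F(x^t)-F(x^*)+\frac{\alpha_{t-1}^2}{2\gamma_{t-1}}\|z^t-x^*\|^2\right).
\end{equation*}
Since $\alpha_t\in(0,1]$, the right-hand side is bounded by the parenthesized quantity, so by induction it stays $\le r_0^2$, and in particular $\tfrac{\alpha_t^2}{2\gamma_t}\|z^{t+1}-x^*\|^2\le r_0^2$. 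Because $\alpha_t\ge\sqrt{\mu\gamma_t}$ is maintained (one checks $\alpha_t\ge\sqrt{\mu\gamma_t}$ from \eqref{unc-equa} together with $\alpha_{t-1}\ge\sqrt{\mu\gamma_{t-1}}$, using $\gamma_t\le\gamma_0\le 1/\mu$; when $\mu=0$ the coefficient argument degenerates but $\alpha_t^2/\gamma_t\ge \alpha_{t-1}^2/\gamma_{t-1}$ still holds, and the chain back to $\alpha_0^2/\gamma_0$ works), we get $\|z^{t+1}-x^*\|\le \sqrt{2\gamma_t}\,r_0/\alpha_t\le \sqrt{2\gamma_0}\,r_0/\alpha_0$ after checking the monotonicity $\gamma/\alpha^2 \le \gamma_0/\alpha_0^2$ is not what we need --- rather we need $\gamma_t/\alpha_t^2$ bounded, which follows since $\alpha_t^2/\gamma_t$ is nondecreasing along the iterations. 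Hence $z^{t+1}\in\cS$, and since $x^{t+1}$ and $y^{t+1}$ are convex combinations of points already shown to be in the convex set $\cS$, they lie in $\cS$ too, completing the induction.

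The main obstacle I anticipate is the bookkeeping around the adaptive stepsizes: in the original \cite{LiLuXi14} the parameter $\gamma$ is fixed, whereas here $\gamma_t$ changes from iteration to iteration through the backtracking, so one must be careful that (i) the identity \eqref{unc-equa} still produces $\alpha_t\in(0,1]$ with $\alpha_t\ge\sqrt{\mu\gamma_t}$, (ii) the quantity $\alpha_t^2/\gamma_t$ remains monotone so the radius bound telescopes back to $\alpha_0^2/(2\gamma_0)$, and (iii) the circular dependence between ``$y^t\in\cS$'' (needed to run the line search) and ``the potential bound'' (needed to conclude $z^{t+1}\in\cS$) is resolved by ordering the induction correctly: first use the hypothesis to get $y^t\in\cS$, then run the line search to get $n_t\le N$ and $x^{t+1}\in\chS$ (this is where the enlarged set $\chS$ is essential), then invoke the descent bound on $\chS$ to validate \eqref{unc-line}, then derive the potential recursion, and only then conclude $z^{t+1},x^{t+1}\in\cS$.
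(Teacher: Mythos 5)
Your overall architecture matches the paper's: an induction in which membership of $x^t,z^t$ in $\cS$ yields $y^t\in\cS$, the enlarged set $\chS$ captures the candidate iterate $x^{t+1}(\gamma)$ for every trial stepsize $\gamma\in(0,\gamma_0]$, Lipschitz continuity of $\nabla f$ on $\chS$ forces the line search to stop once $\gamma_t\leq 1/L_\chS$ (hence $n_t\leq N$), and the estimate-sequence recursion then returns $z^{t+1}$ (and hence $x^{t+1}$) to $\cS$. The one genuine gap is in the last step. You discard the product $\prod_{i=1}^t(1-\alpha_i)$ from the right-hand side of the recursion, keep only $\frac{\alpha_t^2}{2\gamma_t}\|z^{t+1}-x^*\|^2\leq r_0^2$, and then claim $\sqrt{2\gamma_t}\,r_0/\alpha_t\leq\sqrt{2\gamma_0}\,r_0/\alpha_0$ because ``$\alpha_t^2/\gamma_t$ is nondecreasing.'' That monotonicity is backwards: \eqref{unc-equa} together with $\mu\gamma_{t-1}\leq\alpha_{t-1}^2$ gives $\gamma_{t-1}\alpha_t^2=(1-\alpha_t)\alpha_{t-1}^2\gamma_t+\mu\alpha_t\gamma_t\gamma_{t-1}\leq\gamma_t\alpha_{t-1}^2$, so $\alpha_t^2/\gamma_t$ is \emph{nonincreasing} and $\gamma_t/\alpha_t^2$ grows along the iterations (strictly when $\mu=0$, since then $\alpha_t^2/\gamma_t=(1-\alpha_t)\alpha_{t-1}^2/\gamma_{t-1}$). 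Hence the bound $\gamma_t/\alpha_t^2\leq\gamma_0/\alpha_0^2$ fails and your radius estimate does not close.

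The repair is precisely the factor you threw away. Writing $\lambda_t=\prod_{i=1}^t(1-\alpha_i)$, the recursion gives $\frac{\alpha_t^2}{2\gamma_t}\|z^{t+1}-x^*\|^2\leq\lambda_t r_0^2$, and \eqref{unc-equa} yields $\lambda_t=(1-\alpha_t)\lambda_{t-1}=\frac{\gamma_{t-1}\alpha_t^2-\mu\alpha_t\gamma_t\gamma_{t-1}}{\alpha_{t-1}^2\gamma_t}\lambda_{t-1}\leq\frac{\gamma_{t-1}\alpha_t^2}{\alpha_{t-1}^2\gamma_t}\lambda_{t-1}$; that is, the \emph{combined} quantity $\gamma_t\lambda_t/\alpha_t^2$ is nonincreasing and hence bounded by $\gamma_0/\alpha_0^2$, which is what gives $\|z^{t+1}-x^*\|\leq\sqrt{2\gamma_0}\,r_0/\alpha_0$ and $z^{t+1}\in\cS$. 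A further, smaller imprecision: the estimate that places $x^{t+1}(\gamma)$ in $\chS$ does not come from $\gamma\|\nabla f(y^t)\|$ alone; it comes from combining the optimality conditions of the prox step and of \eqref{unc-prob} at $x^*$ with the monotonicity of $\partial P$, which produces the term $\gamma\|\nabla f(y^t(\gamma))-\nabla f(x^*)\|\leq\gamma L_\cS\|y^t(\gamma)-x^*\|$ --- that is where the factor $1+\gamma_0L_\cS$ in \eqref{def-hS} actually originates. The rest of your plan (the ordering of the induction, the role of $\chS$, and the use of convexity of $\cS$ for the convex combinations $y^t$ and $x^{t+1}$) agrees with the paper's Lemmas \ref{l5} and \ref{l6}.
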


The next theorem presents a result regarding \emph{convergence rate} of Algorithm~\ref{alg-acc}, whose proof is deferred to Section \ref{sec:proof}.

\begin{thm}\label{t0} 
Let $\{x^t\}$ be generated by Algorithm~\ref{alg-acc}.  
Then for all  $t \ge 1$, it holds that 
\beq \label{opt-gap}
F(x^t)-F(x^*) \leq\min\left\{\left(1-\sqrt{\mu\min\left\{\gamma_0,\delta L_\chS^{-1}\right\}}\ \right)^{t-1},\; 4\left(2+(t-1)\alpha_0\sqrt{\min\left\{1,\delta \gamma_0^{-1}L_\chS^{-1}\right\}}\ \right)^{-2}\right\}r^2_0.
\eeq
\end{thm}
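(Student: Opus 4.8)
The plan is to establish a one-step Lyapunov recursion, unroll it into $F(x^t)-F(x^*)\le r_0^2\prod_{s=1}^{t-1}(1-\alpha_s)$, and then bound that product in two complementary ways. I would first rewrite the parameters conveniently: setting $a_t:=\alpha_t^2/\gamma_t$, dividing \eqref{unc-equa} by $\gamma_t\gamma_{t-1}$ yields $a_t=(1-\alpha_t)a_{t-1}+\mu\alpha_t$, and since $\beta_t=\mu\gamma_t/\alpha_t$ we get $a_t\beta_t=\mu\alpha_t$ and $a_t(1-\beta_t)=(1-\alpha_t)a_{t-1}$. With $v^t:=\beta_ty^t+(1-\beta_t)z^t$, one checks from \eqref{unc-iter}--\eqref{unc-updx} that $x^{t+1}-y^t=\alpha_t(z^{t+1}-v^t)$. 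The core computation combines four ingredients: (i) optimality of $z^{t+1}$ in \eqref{unc-prox}, whose objective is $\alpha_t$-strongly convex, giving the three-point inequality $\alpha_t[\langle\nabla f(y^t),z^{t+1}-x^*\rangle+P(z^{t+1})-P(x^*)]\le\tfrac{a_t}{2}(\|x^*-v^t\|^2-\|z^{t+1}-v^t\|^2-\|x^*-z^{t+1}\|^2)$; (ii) the line-search inequality \eqref{unc-line}, which after using $x^{t+1}-y^t=\alpha_t(z^{t+1}-v^t)$ reads $f(x^{t+1})\le f(y^t)+\langle\nabla f(y^t),x^{t+1}-y^t\rangle+\tfrac{a_t}{2}\|z^{t+1}-v^t\|^2$; (iii) convexity of $P$ with $x^{t+1}=(1-\alpha_t)x^t+\alpha_tz^{t+1}$; and (iv) the (strong) convexity \eqref{convex-1} of $f$ at $y^t$ evaluated at $y=x^t$ and $y=x^*$. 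Adding these with the right weights, the $\|z^{t+1}-v^t\|^2$ terms cancel, expanding $\|v^t-x^*\|^2$ and using $a_t\beta_t=\mu\alpha_t$ cancels the $\|y^t-x^*\|^2$ contribution, and $a_t(1-\beta_t)=(1-\alpha_t)a_{t-1}$ rewrites the $\|z^t-x^*\|^2$ contribution, yielding
\[
F(x^{t+1})-F(x^*)+\tfrac{a_t}{2}\|z^{t+1}-x^*\|^2\le(1-\alpha_t)\Big(F(x^t)-F(x^*)+\tfrac{a_{t-1}}{2}\|z^t-x^*\|^2\Big).
\]
Writing $\Delta_t:=F(x^t)-F(x^*)+\tfrac{a_{t-1}}{2}\|z^t-x^*\|^2$, with $z^1=x^1$ and $a_0=\alpha_0^2/\gamma_0$ we get $\Delta_1=r_0^2$ from \eqref{def-S}, hence $F(x^t)-F(x^*)\le\Delta_t\le r_0^2\prod_{s=1}^{t-1}(1-\alpha_s)$ for all $t\ge1$.

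To bound the product I would establish two parameter facts. First, $\alpha_t\ge\sqrt{\mu\gamma_t}$ for all $t$: at $t=0$ this is the input $\alpha_0\ge\sqrt{\mu\gamma_0}$ (i.e. $a_0\ge\mu$), and $a_{t-1}\ge\mu$ forces $a_t=(1-\alpha_t)a_{t-1}+\mu\alpha_t\ge\mu$, i.e. $\alpha_t^2=\gamma_ta_t\ge\mu\gamma_t$; this also gives $\beta_t\le1$, used above. Second, $\gamma_t\ge m:=\min\{\gamma_0,\delta L_\chS^{-1}\}$: by Theorem~\ref{inner} we have $n_t\le N$ with $N$ as in \eqref{def-N}, so $\gamma_t=\gamma_0\delta^{n_t}\ge\gamma_0\delta^N$, and a direct estimate of $\delta^N$ from \eqref{def-N} gives $\gamma_0\delta^N\ge m$. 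The linear bound is then immediate: $\alpha_s\ge\sqrt{\mu\gamma_s}\ge\sqrt{\mu m}$, so $\prod_{s=1}^{t-1}(1-\alpha_s)\le(1-\sqrt{\mu m})^{t-1}$. For the sublinear bound, let $\lambda_k:=\prod_{s=1}^{k}(1-\alpha_s)$ and $b_k:=\lambda_k^{-1/2}$, so $b_0=1$ and $b_k\ge b_{k-1}$; from $a_k\ge(1-\alpha_k)a_{k-1}$ one telescopes to $\lambda_k\le a_k/a_0$, whence with $\alpha_k=\sqrt{\gamma_ka_k}$ and $\gamma_k\ge m$,
\[
b_k-b_{k-1}=\frac{\alpha_kb_k^2}{b_k+b_{k-1}}\ge\frac{\alpha_kb_k}{2}\ge\frac{\alpha_k}{2}\sqrt{\frac{a_0}{a_k}}=\frac{\sqrt{\gamma_ka_0}}{2}\ge\frac{\sqrt{ma_0}}{2}.
\]
Telescoping gives $b_{t-1}\ge1+\tfrac{t-1}{2}\sqrt{ma_0}$, so $\prod_{s=1}^{t-1}(1-\alpha_s)=b_{t-1}^{-2}\le4(2+(t-1)\sqrt{ma_0})^{-2}$; since $ma_0=\alpha_0^2\min\{1,\delta\gamma_0^{-1}L_\chS^{-1}\}$, this matches the second term in \eqref{opt-gap}, and taking the minimum of the two bounds completes the proof.

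The hard part will be the one-step recursion: the backtracking makes $\gamma_t$ only implicitly defined, so the line-search inequality \eqref{unc-line} has to take over the role usually played by a global descent lemma, and the $\beta_t$-parametrization of $y^t$ must be unwound so that the several strong-convexity quadratics attached to $y^t$, $z^t$, $z^{t+1}$ and $x^*$ recombine into the clean recursion — which is exactly where the identity \eqref{unc-equa} and the choice $\beta_t=\mu\gamma_t/\alpha_t$ are consumed. By contrast, everything downstream of the recursion is the familiar estimate-sequence bookkeeping, needing only the extra care of carrying the adaptive $\gamma_t$ through Theorem~\ref{inner}.
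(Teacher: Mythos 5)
Your proposal is correct and follows essentially the same route as the paper: the one-step potential recursion is the paper's Lemma~\ref{l4} (obtained there via the three-point inequality for \eqref{unc-prox} and the line search \eqref{unc-line}, with the same cancellations driven by $\beta_t=\mu\gamma_t\alpha_t^{-1}$ and \eqref{unc-equa}), the facts $\alpha_t\ge\sqrt{\mu\gamma_t}$ and $\lambda_t\le a_t/a_0$ are Lemma~\ref{l3} and the step $\gamma_t\lambda_t/\alpha_t^2\le\gamma_0/\alpha_0^2$ in the proof of Lemma~\ref{l6}, and the telescoping of $\lambda_k^{-1/2}$ together with $\gamma_t\ge\gamma_0\delta^N\ge\min\{\gamma_0,\delta L_\chS^{-1}\}$ from Theorem~\ref{inner} is exactly how the paper concludes. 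The only cosmetic difference is that you run the prox three-point inequality centered at $v^t=\beta_ty^t+(1-\beta_t)z^t$ and push the convexity of $\|\cdot\|^2$ into expanding $\|v^t-x^*\|^2$, whereas the paper splits the inner product first and compares $Q_t$ with $R_t$; these are equivalent.
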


\begin{rem} \label{remark-alg1}
(i) Despite only assuming local Lipschitz continuity of $\nabla f$ on $\cl(\dom(P))$, Algorithm~\ref{alg-acc} enjoys a similar convergence rate as the optimal APG method \cite[Algorithm 1 with a single block]{LiLuXi14} which was proposed and analyzed for solving 
a special case of problem \eqref{unc-prob} with $\nabla f$ being Lipschitz continuous on $\rr^n$.

(ii) An adaptive gradient method was recently proposed in \cite[Algorithm 1]{mali20} for solving a special case of problem \eqref{unc-prob} with $P\equiv 0$. It is a variant of classical gradient methods without acceleration and enjoys a much worse convergence rate than the one given in \eqref{opt-gap}. In particular, when $f$ is convex, it has a convergence rate of $\cO(1/t)$ (see \cite[Theorem 1]{mali20}).
\end{rem}

From theoretical perspective, it follows from Theorem \ref{t0} that Algorithm~\ref{alg-acc} enjoys an iteration complexity of $\cO(\varepsilon^{-1/2})$ and $\cO(\log\varepsilon^{-1})$ for finding an $\varepsilon$-gap solution $x^t$ of \eqref{unc-prob} satisfying $F(x^t)-F^* \le \varepsilon$ when $f$ is convex and strongly convex, respectively.  However, since $F^*$,  $L_\chS^{-1}$ and $r_0$ are typically unknown,  it is difficult to come up with a verifiable termination criterion for Algorithm~\ref{alg-acc} to find an $\varepsilon$-gap solution of \eqref{unc-prob}. To circumvent this issue, we propose some variants of Algorithm~\ref{alg-acc} with a verifiable termination criterion in the next two subsections.

\subsection{An APG method with a termination criterion for problem \eqref{unc-prob} with $\mu>0$} \label{alg-unconstr2}

In this subsection we propose an APG method with a \emph{verifiable termination criterion} for finding an \emph{$\epsilon$-residual solution} of problem \eqref{unc-prob} with $\mu>0$, namely, $f$ being strongly convex on $\dom(P)$. It is a slight variant of Algorithm~\ref{alg-acc} by incorporating a termination criterion that is checked only \emph{periodically}. 

\begin{algorithm}[H]
\caption{An APG method with a termination criterion for problem \eqref{unc-prob} with $\mu>0$}
\label{alg-acc-term}
\begin{algorithmic}[1]
\REQUIRE $\epsilon>0$, $\gamma_0\in(0,1/\mu]$, $0<\alpha_0\in[\sqrt{\mu\gamma_0},1]$, $\delta\in(0,1)$, $M\in\bbZ_+$, and $x^1=z^1\in \dom( P)$.
\FOR{$t=1,2,\dots$}
\STATE Compute
\begin{align*}
&y^t=\left((1-\alpha_t)x^t+\alpha_t(1-\beta_t)z^t\right)/(1-\alpha_t \beta_t),\\
&z^{t+1}=\argmin_{x}\left\{\gamma_t[\langle\nabla f(y^t),x\rangle+ P(x)]+\frac{\alpha_t}{2}\|x-\beta_ty^t-(1-\beta_t)z^t\|^2\right\},\\
&x^{t+1}=(1-\alpha_t)x^t+\alpha_tz^{t+1},
\end{align*}
where $\gamma_t=\gamma_0\delta^{n_t}$ and $\beta_t=\mu\gamma_t\alpha_t^{-1}$ with $\alpha_t\in(0,1]$ being the solution of
\[
\gamma_{t-1}\alpha_t^2=(1-\alpha_t)\alpha_{t-1}^2\gamma_t+\mu\alpha_t\gamma_t\gamma_{t-1},
\]
and $n_t$ being the smallest non-negative integer such that
\begin{equation*}
2\gamma_t\left(f(x^{t+1})-f(y^t)-\langle\nabla f(y^t),x^{t+1}-y^t\rangle\right)\leq\|x^{t+1}-y^t\|^2.
\end{equation*}
\IF {$\mathrm{mod}(t,M)=0$}
\STATE Call Algorithm~\ref{alg-term} with $(x^{t+1},\gamma_0,\delta)$ as the input and output $(\tx^{t+1},\tg_{t+1})$.
\STATE Terminate the algorithm and output $\tx^{t+1}$ if 
\begin{equation}\label{unc-check3}
\|\tg_{t+1}^{-1}(x^{t+1}-\tx^{t+1})+\nabla f(\tx^{t+1})-\nabla f(x^{t+1})\|\leq\epsilon.
\end{equation}
\ENDIF
\ENDFOR
\end{algorithmic}
\end{algorithm}


\begin{algorithm}[H]
\caption{Adaptive proximal gradient iteration}
\label{alg-term}
\begin{algorithmic}[1]
\REQUIRE $v\in\cS$ and $\tg_0, \delta>0$.
\STATE Compute
\begin{equation}\label{unc-xplus}
\tv=\argmin_{x}\left\{\tg\langle\nabla f(v),x\rangle+\tg P(x)+\frac{1}{2}\|x-v\|^2\right\},
\end{equation}
where $\tg=\tg_0\delta^{\tn}$ with $\tn$ being the smallest non-negative integer such that
\beq
2\tg(f(\tv)-f(v)-\langle\nabla f(v),\tv-v\rangle)\leq\|\tv-v\|^2. \label{unc-check1}
\eeq
\STATE  Terminate the algorithm and output $(\tv,\tg)$.
\end{algorithmic}
\end{algorithm}

\begin{rem}
It is clear to see that Algorithm \ref{alg-acc-term} is well-defined at each iteration and equipped with a verifiable termination criterion. In addition, it is almost parameter-free except that the convexity parameter $\mu$ of $f$ is required.
\end{rem}

The following theorem presents an \emph{iteration and operation complexity} of Algorithm~\ref{alg-acc-term} for finding an $\epsilon$-residual solution of problem \eqref{unc-prob} with a strongly convex $f$ on $\dom(P)$, whose proof is deferred to Subsection \ref{sec:proof2}.

\begin{thm}\label{res-complexity} 
Suppose that $\mu>0$, i.e., $f$ is strongly convex on $\dom(P)$. Let $\epsilon$, $M$, $\delta$, $\alpha_0$ and $\gamma_0$ be the input parameters of Algorithm~\ref{alg-acc-term},  $r_0$ and $L_\chS$ be given in \eqref{def-S} and Lemma \ref{F-Lipschitz} respectively, and let
\begin{align}
T &=  M+\left\lceil\frac{2\log\frac{\epsilon}{r_0\left(\sqrt{2\max\{\gamma_0^{-1},L_\chS\delta^{-1}\}}+\sqrt{2\gamma_0}L_\chS\right)}}{\log\left(1-\sqrt{\mu\min\left\{\gamma_0,\delta L_\chS^{-1}\right\}}\,\right)}\right\rceil_+,  \label{def-T} \\ 
\bar N & =  (1+M^{-1})\left(M+\left\lceil\frac{2\log\frac{\epsilon}{r_0\left(\sqrt{2\max\{\gamma_0^{-1},L_\chS\delta^{-1}\}}+\sqrt{2\gamma_0}L_\chS\right)}}{\log\left(1-\sqrt{\mu\min\left\{\gamma_0,\delta L_\chS^{-1}\right\}}\,\right)}\right\rceil_+\right)\left(1+ \left\lceil\frac{\log(\gamma_0L_\chS)}{\log(1/\delta)}\right\rceil_+\right). \label{def-NN}
\end{align}
Then Algorithm~\ref{alg-acc-term} terminates and outputs an $\epsilon$-residual solution of problem \eqref{unc-prob} in at most $T$ iterations. Moreover, the total number of evaluations of $\nabla f$ and proximal operator of $P$ performed in Algorithm~\ref{alg-acc-term} is no more than $\bar N$, respectively.
\end{thm}

\begin{rem}
It can be seen from Theorem \ref{res-complexity} that Algorithm~\ref{alg-acc-term} enjoys an operation complexity  of $\cO(\log\varepsilon^{-1})$ for finding an $\epsilon$-residual solution of problem \eqref{unc-prob} with a strongly convex $f$ on $\dom(P)$.
\end{rem}

\subsection{An APG method with a termination criterion for problem \eqref{unc-prob} with $\mu=0$} \label{alg-unconstr3}

In this subsection we propose an APG method with a \emph{verifiable termination criterion} for finding an \emph{$\varepsilon$-residual solution} of problem \eqref{unc-prob} with $\mu=0$, namely, $f$ being  convex but not strongly convex on $\dom(P)$. In particular, the proposed APG method applies Algorithm~\ref{alg-acc-term} to a sequence of strongly convex optimization problems arising from a perturbation of problem \eqref{unc-prob}.

\begin{algorithm}[H]
\caption{An APG method with a termination criterion for problem \eqref{unc-prob} with $\mu=0$}
\label{PPA-sp}
\begin{algorithmic}[1]
\REQUIRE $\varepsilon>0$, $x_0\in\dom(P)$, $M\in \bbZ_+$,  $0<\delta<1$, $\rho_0> 1$, $0<\gamma_0\leq\rho_0$, $\alpha_0\in [\sqrt{\gamma_0/\rho_0},1]$, $0<\eta_0\leq1$, $\zeta>1$, $0<\sigma<1/\zeta$, $\rho_k=\rho_0\zeta^k$, $\eta_k=\eta_0\sigma^k$ for all $k \ge 0$.
\FOR{$k=0,1,\dots$}
\STATE Call Algorithm~\ref{alg-acc-term} with $F \leftarrow F_k$, $f \leftarrow f_k$, $\epsilon \leftarrow\eta_k$, $\mu \leftarrow \rho_k^{-1}$  , $x^1=z^1 \leftarrow x^k$ and the parameters $\alpha_0$, $\gamma_0$, $\delta$ and $M$, and denote its output by $x^{k+1}$, 
where
\begin{align}
f_k(x) = f(x)+\frac{1}{2\rho_k}\|x-x^k\|^2, \quad F_k(x) = f_k(x)+P(x). \label{fk-sp} 
\end{align}
\STATE Terminate the algorithm and output $x^{k+1}$  if  
\begin{align}
\frac{1}{\rho_k}\|x^{k+1}-x^k\|\leq\frac{\epsilon}{2},\ \eta_k\leq\frac{\epsilon}{2}.\label{ppa-term-sp}
\end{align}
\ENDFOR
\end{algorithmic}
\end{algorithm}

\begin{rem}
Algorithm \ref{PPA-sp} is parameter-free and equipped with a verifiable termination criterion. In addition, by the monotonicity of $\{\rho_k\}$, one has
\begin{align*}
0<\gamma_0\leq\rho_0\leq \rho_k,\qquad
\sqrt{\rho_k^{-1}\gamma_0}\leq\sqrt{\rho_0^{-1}\gamma_0}\leq\alpha_0\leq 1.
\end{align*}
Consequently, the choice of $\alpha_0$ and $\gamma_0$ in Algorithm \ref{PPA-sp} satisfies the requirements specified in Algorithm~\ref{alg-acc-term}. It then follows from Theorem \ref{res-complexity} that at the $k$th outer iteration of Algorithm \ref{PPA-sp}, $x^{k+1}$ must be successfully generated by Algorithm~\ref{alg-acc-term}, which is an $\eta_k$-residual solution of the problem $\min_x \{F_k(x)=f_k(x)+P(x)\}$. Thus,  it holds that 
\begin{equation}\label{subprob-term-sp}
\dist(0,\partial F_k(x^{k+1}))\leq\eta_k.
\end{equation}  
\end{rem}

We next study iteration and operation complexity of Algorithm \ref{PPA-sp} for finding an $\varepsilon$-residual solution of problem \eqref{unc-prob} with $f$ being convex but not strongly convex on $\dom(P)$. Before proceeding, we introduce some notation that will be used subsequently.
We define
\begin{align}
r_0 &=\|x^0-x^*\|,\quad \theta=\sum_{i=0}^\infty\rho_i\eta_i=\frac{\rho_0\eta_0}{1-\sigma\zeta},  \label{def1-sp} \\
\tr0 &=\max\left\{\sqrt{2\gamma_0\alpha_0^{-2}(F(x^0)-F(x^*))+r_0^2}, \  \sqrt{2\gamma_0\alpha_0^{-2}(r_0+\theta)\left(\eta_0+\rho_0^{-1}(r_0+\theta)\right)+(r_0+\theta)^2}\right\}. \label{def-tP-sp}
\end{align}
Also, we define 
\begin{equation}\label{def-P-sp}
\mcQ=\left\{x\in \dom( P):\|x-x^*\|\leq\tr0+r_0+\theta\right\}.
\end{equation}
Let $L_{\nabla f}$ be the Lipschitz constant of $\nabla f$ on $\mcQ$
and 
\begin{equation}\label{def-Q-sp}
L = L_{\nabla f}+\rho_0^{-1},\quad \chQ=\left\{x\in \dom( P):\|x-x^*\| \leq (1+\gamma_0L)\tr0+r_0+\theta\right\}, \quad \tL=\tL_{\nabla f}+\rho_0^{-1}, 
\end{equation}
where $\tL_{\nabla f}$ is the Lipschitz constant of $\nabla f$ on $\chQ$. By the local Lipschitz continuity of $\nabla f$ on $\cl(\dom(P))$ and a similar argument as in the proof of Lemma \ref{F-Lipschitz}, one can easily observe that $L$, $\tL$, $L_{\nabla f}$, $\tL_{\nabla f}$, $\mcQ$, and $\chQ$ are well-defined.

The following theorem presents an\emph{ iteration and operation complexity} of Algorithm~\ref{PPA-sp} for finding an \emph{$\epsilon$-residual solution} of problem \eqref{unc-prob} with $f$ being convex but not strongly convex on $\dom(P)$, namely, a point $x$ satisfying $\dist(0,\partial F(x))\leq \varepsilon$, whose proof is deferred to Subsection \ref{sec:proof3}.

\begin{thm} \label{thm:cvx-unconstr}  
Suppose that $\mu=0$, i.e., $f$ is convex but not strongly convex on $\dom(P)$. Let $\varepsilon$, $M$, $\delta$, $\rho_0$, $\alpha_0$, $\gamma_0$,  $\eta_0$, $\zeta$ and $\sigma$ be the input parameters of Algorithm~\ref{PPA}, and let $r_0$, $\theta$, $\tr0$ and $\tL$ be  given in \eqref{def1-sp}, \eqref{def-tP-sp} and \eqref{def-Q-sp}, respectively. Define
\begin{align}
\tC_1  &= (1+M^{-1})\left(1+\left\lceil\frac{\log(\gamma_0\tL)}{\log(1/\delta)}\right\rceil_+\right), \label{C1-sp} \\
\tC_2 &=  \frac{\sqrt{\rho_0\zeta} \tC_1\left(\log\frac{\alpha_0^2\tr0^2\left(\sqrt{\max\{\gamma_0^{-2},\gamma_0^{-1}\tL\delta^{-1}\}}+\tL\right)^2}{\eta_0^2}\right)_+}{(\sqrt{\zeta}-1)\min\left\{\sqrt{\gamma_0},\sqrt{\delta \tL^{-1}}\right\}}, \quad 
\tC_3 =\frac{2\sqrt{\rho_0\zeta}\tC_1\log(1/\sigma)}{(\sqrt{\zeta}-1)\min\left\{\sqrt{\gamma_0},\sqrt{\delta \tL^{-1}}\right\}}. \label{C2-sp}
\end{align}
Then the following statements hold.
\bi
\item[(i)]
Algorithm~\ref{PPA-sp} outputs an $\varepsilon$-residual solution of problem \eqref{unc-prob} after at most $K+1$ outer iterations, where 
\beq \label{K-sp}
K =\left\lceil\max\left\{\log\left(\frac{2r_0+2\theta}{\rho_0\varepsilon}\right)/\log\zeta,\frac{\log(2\eta_0/\varepsilon)}{\log(1/\sigma)}\right\}\right\rceil_+. 
\eeq 

\item[(ii)]
The total number of evaluations of $\nabla f$ and proximal operator of $P$ performed in Algorithm~\ref{PPA-sp} is no more than $\widetilde N$, respectively, where
\begin{align}
\widetilde N = & \ (M+1)\tC_1+ (M+1) \tC_1\left\lceil\max\left\{\log\left(\frac{2r_0+2\theta}{\varepsilon\rho_0}\right)/\log\zeta,\frac{\log(2\eta_0/\varepsilon)}{\log(1/\sigma)}\right\}\right\rceil_+\notag \\
&+\tC_2 \max\left\{\sqrt{\frac{2\zeta(r_0+\theta)}{\varepsilon\rho_0}},\sqrt{\zeta}\left(\frac{2\eta_0}{\varepsilon}\right)^{\frac{\log\zeta}{2\log(1/\sigma)}},1\right\} \notag \\
&+\tC_3\left\lceil\max\left\{\log\left(\frac{2r_0+2\theta}{\varepsilon\rho_0}\right)/\log\zeta,\frac{\log(2\eta_0/\varepsilon)}{\log(1/\sigma)}\right\}\right\rceil_+  \max\left\{\sqrt{\frac{2\zeta(r_0+\theta)}{\varepsilon\rho_0}},\sqrt{\zeta}\left(\frac{2\eta_0}{\varepsilon}\right)^{\frac{\log\zeta}{2\log(1/\sigma)}},1\right\}.\label{complexity-sp}
\end{align}
\ei
\end{thm}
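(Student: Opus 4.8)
The plan is to read Algorithm~\ref{PPA-sp} as an \emph{inexact proximal point method} for \eqref{unc-prob}: the $k$th outer iteration approximately minimizes the $\rho_k^{-1}$-strongly convex function $F_k$ by running Algorithm~\ref{alg-acc-term} with $\mu\leftarrow\rho_k^{-1}$ and tolerance $\eta_k$. As the remark following Algorithm~\ref{PPA-sp} records, the choice of $(\alpha_0,\gamma_0)$ meets the input requirements of Algorithm~\ref{alg-acc-term} for this $\mu$, so Theorem~\ref{res-complexity} applies to each subproblem and yields $x^{k+1}$ with $\dist(0,\partial F_k(x^{k+1}))\le\eta_k$, i.e.\ \eqref{subprob-term-sp}. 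The first step is to record the subgradient identity $\partial F_k(x)=\partial F(x)+\rho_k^{-1}(x-x^k)$ (valid since $f$ is differentiable, so the sum rule applies). Combined with \eqref{subprob-term-sp} it produces some $g\in\partial F(x^{k+1})$ with $\|g\|\le\eta_k+\rho_k^{-1}\|x^{k+1}-x^k\|$; hence whenever the stopping test \eqref{ppa-term-sp} triggers, $\dist(0,\partial F(x^{k+1}))\le\varepsilon$, which proves the $\varepsilon$-residual claim in part (i).

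For the iteration count in part (i), I would introduce the exact proximal point $\hat x^{k+1}=\prox_{\rho_k F}(x^k)$, the minimizer of $F_k$. Two standard estimates are needed: (a) $\rho_k^{-1}$-strong convexity of $F_k$ together with \eqref{subprob-term-sp} gives $\|x^{k+1}-\hat x^{k+1}\|\le\rho_k\eta_k$; (b) the Fej\'er-type inequality $\|\hat x^{k+1}-x^*\|\le\|x^k-x^*\|$, from $\rho_k^{-1}(x^k-\hat x^{k+1})\in\partial F(\hat x^{k+1})$ and optimality of $x^*$ for \eqref{unc-prob}. Chaining (a) and (b) and unrolling yields $\|x^k-x^*\|\le r_0+\theta$ for every $k$, with $\theta=\sum_i\rho_i\eta_i<\infty$ because $\sigma\zeta<1$. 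Consequently $\rho_k^{-1}\|x^{k+1}-x^k\|\le 2(r_0+\theta)/(\rho_0\zeta^k)$ and $\eta_k=\eta_0\sigma^k$ both shrink geometrically, so \eqref{ppa-term-sp} holds once $\rho_k$ is of order $(r_0+\theta)/\varepsilon$ and $\eta_k\le\varepsilon/2$; taking logarithms gives the bound $K$ in \eqref{K-sp}.

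For the operation complexity in part (ii), I would bound, for each $k\le K$, the number of $\nabla f$- and $\prox_P$-evaluations spent on the $k$th subproblem via Theorem~\ref{res-complexity} (with $\mu\leftarrow\rho_k^{-1}$, $\epsilon\leftarrow\eta_k$), and then sum over $k$. The substantive step is to control, uniformly in $k$, the subproblem-dependent quantities entering the bounds $T$ and $\bar N$: write $r_0^{(k)}$ for the instance of $r_0$ in \eqref{def-S} for $F_k$ started at $x^k$ (so $\cS^{(k)}=\{x\in\dom(P):\|x-\hat x^{k+1}\|\le\sqrt{2\gamma_0}\,r_0^{(k)}/\alpha_0\}$), and $L_{\cS}^{(k)},L_{\chS}^{(k)}$ for the associated Lipschitz constants of Lemma~\ref{F-Lipschitz}. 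Using $F_k(x^k)=F(x^k)$, $\min_x F_k(x)\ge F^*$, $\|x^k-\hat x^{k+1}\|\le\|x^k-x^*\|\le r_0+\theta$, and a bound $F(x^k)-F^*\lesssim(r_0+\theta)\bigl(\eta_0+\rho_0^{-1}(r_0+\theta)\bigr)$ coming from the approximate-subgradient property that $x^k$ inherits from outer iteration $k-1$, one obtains $\sqrt{2\gamma_0}\,r_0^{(k)}/\alpha_0\le\tr0$ --- which is precisely why $\tr0$ in \eqref{def-tP-sp} is a maximum of a ``$k=0$'' branch and a ``$k\ge1$'' branch. That inclusion forces $\cS^{(k)}\subseteq\mcQ$, hence $L_{\cS}^{(k)}\le L$, hence (through \eqref{def-hS}) $\chS^{(k)}\subseteq\chQ$, hence $L_{\chS}^{(k)}\le\tL$. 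Exploiting monotonicity of $T$ and $\bar N$ in $r_0$ and $L_{\chS}$ to substitute these uniform upper bounds, using $\log(1-s)\le -s$ to turn $-\log\bigl(1-\sqrt{\rho_k^{-1}\min\{\gamma_0,\delta\tL^{-1}\}}\bigr)$ into $\rho_k^{-1/2}\min\{\sqrt{\gamma_0},\sqrt{\delta\tL^{-1}}\}$, and writing $\log(\eta_0/\eta_k)=k\log(1/\sigma)$, the per-outer-iteration cost becomes of order $C_1\bigl(M+\sqrt{\rho_k}(A+Bk)\bigr)$ with $A$ a single logarithm in $\tr0,\tL,\gamma_0,\delta,\eta_0$ and $B=\log(1/\sigma)$. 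Summing over $k=0,\dots,K$ and evaluating the geometric-type sums $\sum_k\sqrt{\rho_k}$ and $\sum_k k\sqrt{\rho_k}$ --- with $\rho_k=\rho_0\zeta^k$ and $\zeta^K$ bounded through \eqref{K-sp} --- reproduces $C_1,C_2,C_3$ and the four terms of \eqref{complexity-sp}.

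The step I expect to be the main obstacle is the uniform control of the subproblem data, namely establishing $\cS^{(k)}\subseteq\mcQ$ and $\chS^{(k)}\subseteq\chQ$ for all $k$: it requires combining the Fej\'er estimate with the bound on $F(x^k)-F^*$ obtained from the residual of the previous outer iterate, and it must be carried out with the definitions of $r_0^{(k)}$, $\cS^{(k)}$, $L_{\cS}^{(k)}$, $\chS^{(k)}$, $L_{\chS}^{(k)}$ resolved in exactly that order so that no circular dependence arises. The concluding summation is conceptually routine but bookkeeping-heavy, since the inner-iteration bound grows both geometrically (through $\sqrt{\rho_k}$) and linearly (through $\log(1/\eta_k)$) in $k$.
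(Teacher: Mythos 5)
Your proposal follows essentially the same route as the paper's proof: the inexact proximal-point viewpoint with Fej\'er monotonicity to get $\|x^k-x^*\|\le r_0+\theta$, the bound on each subproblem's initial potential via the approximate subgradient inherited from the previous outer iteration (this is exactly the paper's inequality $\br_k^2\le\alpha_0^2\tr0^2/(2\gamma_0)$ and the two-branch definition of $\tr0$), the uniform containments $\cS^{(k)}\subseteq\mcQ$ and $\chS^{(k)}\subseteq\chQ$ yielding $L_\chS^{(k)}\le\tL$, and finally the per-subproblem invocation of Theorem~\ref{res-complexity} followed by summation of the geometric series $\sum_k\sqrt{\rho_k}$ and $\sum_k k\sqrt{\rho_k}$. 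The one quantitative slip is the estimate $\rho_k^{-1}\|x^{k+1}-x^k\|\le 2(r_0+\theta)/(\rho_0\zeta^k)$: to recover the stated $K$ in \eqref{K-sp} you need $\|x^{k+1}-x^k\|\le r_0+\theta$ (no factor $2$), which your own ingredients already give via $\|x^{k+1}-x^k\|\le\|x^{k+1}-\hat x^{k+1}\|+\|\hat x^{k+1}-x^k\|\le\rho_k\eta_k+\|x^k-x^*\|\le\rho_k\eta_k+r_0+\sum_{i<k}\rho_i\eta_i$, rather than by bounding both iterates' distances to $x^*$ separately.
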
 

\begin{rem}
Since $1<\zeta<1/\sigma$, it can be seen from Theorem \ref{thm:cvx-unconstr}  that Algorithm~\ref{PPA-sp} enjoys an operation complexity of $\cO(\varepsilon^{-1/2}\log \varepsilon^{-1})$ for finding an $\varepsilon$-residual solution of problem \eqref{unc-prob} with $f$ being convex but not strongly convex on $\dom(P)$.
\end{rem}

\section{A first-order proximal augmented Lagrangian method for constrained convex optimization}
\label{alg-constr}

In this section we consider problem \eqref{conic-p} and propose a first-order proximal augmented Lagrangian (AL) method for solving it.  Let $\mu\geq0$ denote the \emph{convexity parameter} of $f$ on $\dom( P)$, that is, \eqref{convex-1} holds for $f$ and $\mu$. Before proceeding, we make the following additional assumptions for problem \eqref{conic-p}.

\begin{ass} \label{assump-conic}
\begin{enumerate}[label=(\alph*)]
\item The proximal operator associated with $P$ and the projection onto $\mcK^*$ can be exactly evaluated.
\item Both problem \eqref{conic-p} and its Lagrangian dual problem
\begin{align}\label{conic-d}
\sup_{\lambda\in\mcK^*}\inf_{x}\left\{f(x)+P(x)+\langle\lambda,g(x)\rangle\right\}
\end{align}
have optimal solutions, and moreover, they share the same optimal value.
\end{enumerate}
\end{ass}

Under the assumptions on problem \eqref{conic-p}, it can be observed that $(x,\lambda)$ is a pair of optimal solutions of \eqref{conic-p} and \eqref{conic-d} if and only if it satisfies the Karush-Kuhn-Tucker (KKT) condition
\begin{align*}
0 \in \begin{pmatrix}
\nabla f(x)+\nabla g(x)\lambda+\partial P(x)\\
-g(x)+\mcN_{\mcK^*}(\lambda)
\end{pmatrix}. 
\end{align*}
In general, it is difficult to find an exact optimal solution of \eqref{conic-p} and \eqref{conic-d}. Instead, for any given $\varepsilon>0$, we are interested in finding an $\varepsilon$-KKT solution $(x,\lambda)$ of problems \eqref{conic-p} and \eqref{conic-d} that is defined below.

\begin{defi} \label{approx-KKT-soln}
Given any $\varepsilon>0$, we say $(x,\lambda)\in\rr^n \times \rr^m$ is an $\varepsilon$-KKT solution of problems \eqref{conic-p} and \eqref{conic-d}  if
\begin{equation*}
\dist(0,\nabla f(x)+\partial  P(x)+\nabla g(x)\lambda)\leq \varepsilon,\quad\dist(g(x),\mcN_{\mcK^*}(\lambda))\leq\varepsilon.
\end{equation*}
\end{defi}

We next propose a first-order proximal AL method with a verifiable termination criterion for solving problem \eqref{conic-p}, which follows the same framework as \cite[Algorithm 2]{lu2018iteration} except that the proximal AL subproblems are approximately solved by our newly proposed APG method, namely, Algorithm~\ref{alg-acc-term}. Specifically, at the $k$th iteration, our method applies Algorithm~\ref{alg-acc-term} to approximately solve the proximal AL subproblem 
\[
\min_x \mcL(x,\lambda^k;\rho_k) + \frac{1}{2\rho_k} \|x-x^k\|^2
\]
for some $\lambda^k\in\mcK^*$ and $\rho_k>0$, where $\mcL$ is the AL function associated with problem \eqref{conic-p} defined as
\begin{equation}\label{conic-AL}
\mcL(x,\lambda;\rho)=f(x)+ P(x)+\frac{1}{2\rho}\left(\dist^2\left(\lambda+\rho g(x),-\mcK\right)-\|\lambda\|^2\right). 
\end{equation}

\begin{algorithm}[H]
\caption{A first-order proximal augmented Lagrangian method for problem \eqref{conic-p}}
\label{PPA}
\begin{algorithmic}[1]
\REQUIRE $\varepsilon>0$, $(x_0,\lambda_0)\in\dom(P)\times\mcK^*$, $M\in \bbZ_+$,  $0<\delta<1$, $\rho_0> (\mu+\sqrt{\mu^2+4})/2$, $\alpha_0\in [\sqrt{(\mu+1/\rho_0)/\rho_0},1]$, $0<\eta_0\leq1$, $\zeta>1$, $0<\sigma<1/\zeta$, $\rho_k=\rho_0\zeta^k$, $\eta_k=\eta_0\sigma^k$ for all $k \ge 0$.
\FOR{$k=0,1,\dots$}
\STATE Call Algorithm~\ref{alg-acc-term} with $F \leftarrow F_k$, $f \leftarrow f_k$, $\epsilon \leftarrow\eta_k$, $\gamma_0 \leftarrow \rho_k^{-1}$,   $\mu \leftarrow \mu+\rho_k^{-1}$  , $x^1=z^1 \leftarrow x^k$ and the parameters $\alpha_0$, $\delta$ and $M$, and denote its output by $x^{k+1}$, 
where
\begin{align}
f_k(x) = f(x)+\frac{1}{2\rho_k}\left(\dist^2\left(\lambda^k+\rho_k g(x),-\mcK\right)-\|\lambda^k\|^2+\|x-x^k\|^2\right), \quad F_k(x) = f_k(x)+P(x). \label{fk} 
\end{align}
\label{step2}
\STATE Set $\lambda^{k+1}=\Pi_{\mcK^*}\left(\lambda^k+\rho_kg(x^{k+1})\right)$.
\STATE Terminate the algorithm and output $(x^{k+1},\lambda^{k+1})$  if  
\begin{align}
\frac{1}{\rho_k}\|(x^{k+1},\lambda^{k+1})-(x^k,\lambda^k)\|\leq\frac{\epsilon}{2},\quad  \eta_k\leq\frac{\epsilon}{2}.\label{ppa-term}
\end{align}
\ENDFOR
\end{algorithmic}
\end{algorithm}

\begin{rem}
(i) Algorithm \ref{PPA} is equipped with a verifiable termination criterion and almost parameter-free except that the convexity parameter $\mu$ of $f$ is required.

(ii) Since $\rho_0> (\mu+\sqrt{\mu^2+4})/2$, it follows that $\rho_0^{-1} < 1/(\mu+\rho_0^{-1})$. By this, $\alpha_0\in [\sqrt{(\mu+1/\rho_0)/\rho_0},1]$, and the monotonicity of $\{\rho_k\}$, one has
\begin{align*}
0<\rho_k^{-1} \le \rho_0^{-1} < \frac{1}{\mu+\rho_0^{-1}} \le \frac{1}{\mu+\rho_k^{-1}},  \qquad
\sqrt{(\mu+\rho_k^{-1})\rho_k^{-1}} \le \sqrt{(\mu+\rho_0^{-1})\rho_0^{-1}} \le \alpha_0 \le 1.
\end{align*}
Consequently, the choice of $\alpha_0$ and $\gamma_0$ in Algorithm \ref{PPA} satisfies the requirements specified in Algorithm~\ref{alg-acc-term}. It then follows from Theorem \ref{res-complexity} that at the $k$th outer iteration of Algorithm \ref{PPA}, $x^{k+1}$ must be successfully generated by Algorithm~\ref{alg-acc-term}, which is an $\eta_k$-residual solution of the problem $\min_x \{F_k(x)=f_k(x)+P(x)\}$. Thus, it holds that 
\begin{equation}\label{subprob-term}
\dist(0,\partial F_k(x^{k+1}))\leq\eta_k.
\end{equation}
\end{rem}

\medskip

We next study iteration and operation complexity of Algorithm \ref{PPA} for finding an $\varepsilon$-KKT solution of problems \eqref{conic-p} and \eqref{conic-d}. Before proceeding, we introduce some notation that will be used subsequently.

Let $(x^*,\lambda^*)$ be an arbitrary pair of optimal solutions of problems \eqref{conic-p} and \eqref{conic-d} and fixed throughout this section. We define
\begin{equation}\label{def1}
r_0=\|(x^0,\lambda^0)-(x^*,\lambda^*)\|,\quad \theta=\sum_{i=0}^\infty\rho_i\eta_i=\frac{\rho_0\eta_0}{1-\sigma\zeta},\quad \ctQ=\left\{x\in \dom( P):\|x-x^*\|\leq r_0+\theta\right\}.
\end{equation}

Let $\widetilde L_g$ be the Lipschitz constant of $g$ on $\ctQ$ and
\begin{align}
\tr0=\max\Bigg\{&\sqrt{2\rho_0^{-1}\alpha_0^{-2} (F(x^0)-F(x^*))+\rho_0^{-2}\alpha_0^{-2} (\|\Pi_{\mcK^*}(\lambda^0+\rho_0g(x^0))\|^2+\|\lambda^0-\lambda^*\|^2-\|\lambda^0\|^2)+r_0^2}, \notag \\ 
&\left. \sqrt{2\rho_0^{-1}\alpha_0^{-2}(r_0+\theta)\left(\eta_0+\rho_0^{-1}(r_0+\theta)+2\widetilde L_g(\zeta+1)(\|\lambda^*\|+r_0+\theta)+\rho_0\alpha_0^2(r_0+\theta)\right)}\right\}. \label{def-tP}
\end{align}
We define 
\begin{equation}\label{def-P}
\mcQ=\left\{x\in \dom( P):\|x-x^*\|\leq\tr0+r_0+\theta\right\}.
\end{equation}
Let $L_{\nabla f}$, $L_{\nabla g}$ and $L_g$ be the Lipschitz constants of $\nabla f$, $\nabla g$ and $g$ on $\mcQ$, respectively, and let
\begin{equation}\label{def2}
C=L_{\nabla g}\sup_{x\in\mcQ}\|g(x)\|+L_g^2,\; B=L_{\nabla f}+L_{\nabla g}(\|\lambda^*\|+\sqrt{2}r_0),\;L= C+\rho_0^{-1}B+\rho_0^{-1}L_{\nabla g}\theta+\rho_0^{-2}.
\end{equation}
We define
\begin{equation}\label{def-Q}
\chQ=\left\{x\in \dom( P):\|x-x^*\| \leq (1+L)\tr0+r_0+\theta\right\}.
\end{equation}
Let $\tL_{\nabla f}$, $\tL_{\nabla g}$ and $\tL_g$ be the Lipschitz constants of $\nabla f$, $\nabla g$ and $g$ on $\chQ$, respectively, and let
\begin{equation}\label{def3}
\wC=\tL_{\nabla g}\sup_{x\in\chQ}\|g(x)\|+\tL_g^2,\;\tB=\tL_{\nabla f}+\tL_{\nabla g}(\|\lambda^*\|+\sqrt{2}r_0),\;\tL=\wC+\rho_0^{-1}\tB+\rho_0^{-1}\tL_{\nabla g}\theta+\rho_0^{-2}.
\end{equation}
By the local Lipschitz continuity of $\nabla f$ and $\nabla g$ on $\cl(\dom(P))$ and a similar argument as in the proof of Lemma \ref{F-Lipschitz}, one can easily observe that $\widetilde L_g$, $L_{\nabla f}$, $L_{\nabla g}$, $L_g$, $\tL_{\nabla f}$, $\tL_{\nabla g}$, $\tL_g$, $B$, 
$C$, $L$, $\tB$, $\wC$, $\tL$, $\mcQ$, and $\chQ$ are well-defined.  

The following theorem presents an \emph{iteration and operation complexity} of Algorithm~\ref{PPA} for finding an \emph{$\epsilon$-KKT solution} of problems \eqref{conic-p} and \eqref{conic-d}, whose proof is deferred to Subsection \ref{sec:proof4}.

\begin{thm}\label{thm:constr}  
Let $\varepsilon$, $M$, $\delta$, $\rho_0$, $\alpha_0$,  $\eta_0$, $\zeta$ and $\sigma$ be the input parameters of Algorithm~\ref{PPA}, and let $r_0$, $\theta$, $\tr0$ and $\tL$ be  given in \eqref{def1}, \eqref{def-tP} and \eqref{def3}, respectively. Define
\begin{align}
\wC_1  &= (1+M^{-1})\left(1+\left\lceil\frac{\log\tL}{\log(1/\delta)}\right\rceil_+\right), \label{C1} \\
\wC_2 &=  \frac{\wC_1\left(\log\frac{\rho_0^2\alpha_0^2\tr0^2\left(\sqrt{\max\{1,\tL\delta^{-1}\}}+\tL\right)^2}{\eta_0^2}\right)_+}{(\sqrt{\zeta}-1)\min\left\{1,\sqrt{\delta \tL^{-1}}\right\}}, \quad 
\wC_3=\frac{2\wC_1 \log(\zeta/\sigma)}{(\sqrt{\zeta}-1)\min\left\{1,\sqrt{\delta \tL^{-1}}\right\}}. \label{C2}
\end{align}
Then the following statements hold.
\bi
\item[(i)]
Algorithm~\ref{PPA} outputs an $\varepsilon$-KKT solution of problems \eqref{conic-p} and \eqref{conic-d} after at most $K+1$ outer iterations, where 
\beq \label{K}
K =\left\lceil\max\left\{\log\left(\frac{2r_0+2\theta}{\rho_0\varepsilon}\right)/\log\zeta,\frac{\log(2\eta_0/\varepsilon)}{\log(1/\sigma)}\right\}\right\rceil_+. 
\eeq 
\item[(ii)]
If $\mu=0$, i.e., $f$ is convex but not strongly convex, the total number of evaluations of $\nabla f$, $\nabla g$, proximal operator of $P$ and projection onto $\mcK^*$ performed in Algorithm~\ref{PPA} is no more than $\widehat N$, respectively, where
\begin{align}
\widehat N = & \ 1+(M+1)\wC_1+\left(1+(M+1)\wC_1\right)\left\lceil\max\left\{\log\left(\frac{2r_0+2\theta}{\varepsilon\rho_0}\right)/\log\zeta,\frac{\log(2\eta_0/\varepsilon)}{\log(1/\sigma)}\right\}\right\rceil_+\notag \\
&+ \wC_2\rho_0\zeta\max\left\{\frac{2\zeta(r_0+\theta)}{\varepsilon\rho_0},\zeta\left(\frac{2\eta_0}{\varepsilon}\right)^{\frac{\log\zeta}{\log(1/\sigma)}},1\right\} \notag \\
&+\wC_3\rho_0\zeta\left\lceil\max\left\{\log\left(\frac{2r_0+2\theta}{\varepsilon\rho_0}\right)/\log\zeta,\frac{\log(2\eta_0/\varepsilon)}{\log(1/\sigma)}\right\}\right\rceil_+ \max\left\{\frac{2\zeta(r_0+\theta)}{\varepsilon\rho_0},\zeta\left(\frac{2\eta_0}{\varepsilon}\right)^{\frac{\log\zeta}{\log(1/\sigma)}},1\right\}.\label{def-complexity}
\end{align}
\item[(iii)]
If $\mu>0$, i.e., $f$ is strongly convex, the total number of evaluations of $\nabla f$, $\nabla g$, proximal operator of $P$ and projection onto $\mcK^*$ performed in Algorithm~\ref{PPA} is no more than $\check N$, respectively, where
\begin{align}
\check N = & \ 1+(M+1)\wC_1+\left(1+(M+1)\wC_1\right)\left\lceil\max\left\{\log\left(\frac{2r_0+2\theta}{\varepsilon\rho_0}\right)/\log\zeta,\frac{\log(2\eta_0/\varepsilon)}{\log(1/\sigma)}\right\}\right\rceil_+\notag \\
&+ \wC_2\sqrt{\frac{\rho_0\zeta}{\mu}}\max\left\{\sqrt{\frac{2\zeta(r_0+\theta)}{\varepsilon\rho_0}},\sqrt{\zeta}\left(\frac{2\eta_0}{\varepsilon}\right)^{\frac{\log\zeta}{2\log(1/\sigma)}},1\right\} \notag \\
&+\wC_3\sqrt{\frac{\rho_0\zeta}{\mu}}\left\lceil\max\left\{\log\left(\frac{2r_0+2\theta}{\varepsilon\rho_0}\right)/\log\zeta,\frac{\log(2\eta_0/\varepsilon)}{\log(1/\sigma)}\right\}\right\rceil_+  \max\left\{\sqrt{\frac{2\zeta(r_0+\theta)}{\varepsilon\rho_0}},\sqrt{\zeta}\left(\frac{2\eta_0}{\varepsilon}\right)^{\frac{\log\zeta}{2\log(1/\sigma)}},1\right\}.\label{strong-complexity}
\end{align}
\ei
\end{thm}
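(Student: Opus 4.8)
The argument has two parts, matching the two halves of the statement.

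\emph{Outer complexity and correctness (part (i)).} The plan is to read Algorithm~\ref{PPA} as an inexact proximal point iteration on the monotone KKT operator $T$ of problem~\eqref{conic-p} (monotonicity of $T$ coming from convexity of $f+P$, $\mcK$-convexity of $g$, and $\lambda\in\mcK^*$). From the subproblem accuracy~\eqref{subprob-term} together with the chain rule for the AL term — via the Moreau decomposition $\Pi_{\mcK^*}(u)=u-\Pi_{-\mcK}(u)$, which gives $\nabla_x\big[\tfrac{1}{2\rho_k}\dist^2(\lambda^k+\rho_kg(x),-\mcK)\big]=\nabla g(x)\Pi_{\mcK^*}(\lambda^k+\rho_kg(x))$, equal to $\nabla g(x^{k+1})\lambda^{k+1}$ at $x^{k+1}$ by the $\lambda$-update in Step~3 — I would extract $e^{k+1}$ with $\|e^{k+1}\|\le\eta_k$ and $\rho_k^{-1}(x^k-x^{k+1})+e^{k+1}\in\nabla f(x^{k+1})+\partial P(x^{k+1})+\nabla g(x^{k+1})\lambda^{k+1}$, $\rho_k^{-1}(\lambda^k-\lambda^{k+1})\in-g(x^{k+1})+\mcN_{\mcK^*}(\lambda^{k+1})$. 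Testing these inclusions against a KKT point $w^*=(x^*,\lambda^*)$ (Assumption~\ref{assump-conic}(b)) and using monotonicity yields the fundamental inequality $\|w^{k+1}-w^*\|^2+\|w^{k+1}-w^k\|^2\le\|w^k-w^*\|^2+2\rho_k\eta_k\|x^{k+1}-x^*\|$ with $w^k=(x^k,\lambda^k)$. Since $\sum_i\rho_i\eta_i=\theta<\infty$, an induction gives $\|w^k-w^*\|\le r_0+\theta$ for all $k$, hence $x^k\in\ctQ$ — the a priori boundedness needed to invoke the local Lipschitz moduli later. Telescoping also bounds $\|w^{k+1}-w^k\|$ by $O(r_0+\theta)$, so $\rho_k^{-1}\|w^{k+1}-w^k\|$ decays geometrically while $\eta_k=\eta_0\sigma^k\to0$; thus both conditions of~\eqref{ppa-term} hold once $k\ge K$, giving termination within $K+1$ outer iterations. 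Finally, the displayed inclusions give, at the terminal iterate, $\dist(0,\nabla f(x^{k+1})+\partial P(x^{k+1})+\nabla g(x^{k+1})\lambda^{k+1})\le\eta_k+\rho_k^{-1}\|x^{k+1}-x^k\|\le\varepsilon$ and, since $\lambda^k+\rho_kg(x^{k+1})-\lambda^{k+1}\in\mcN_{\mcK^*}(\lambda^{k+1})$, $\dist(g(x^{k+1}),\mcN_{\mcK^*}(\lambda^{k+1}))\le\rho_k^{-1}\|\lambda^{k+1}-\lambda^k\|\le\varepsilon$, i.e.\ an $\varepsilon$-KKT solution.

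\emph{Operation complexity (parts (ii)--(iii)).} Here I would bound the cost of outer iteration $k\le K$ by applying Theorem~\ref{res-complexity} to $\min_x F_k(x)$ with $\epsilon\leftarrow\eta_k$, $\gamma_0\leftarrow\rho_k^{-1}$, $\mu\leftarrow\mu+\rho_k^{-1}$, $x^1\leftarrow x^k$, and then sum a geometric series in $k$. The crucial technical step is to control, uniformly in $k$ and after the $\gamma_0=\rho_k^{-1}$ rescaling, the quantities entering the bounds $T$, $\bar N$ of Theorem~\ref{res-complexity}: (a) $F_k$ is $(\mu+\rho_k^{-1})$-strongly convex, and the subproblem initial radius $r_{0,k}$ obeys $\sqrt{2\rho_k^{-1}}\,r_{0,k}/\alpha_0\le\tr0$, proved by estimating $F_k(x^k)-\min_xF_k$ and the distance from $x^k$ to the minimizer of $F_k$ using $x^k\in\ctQ$, the bound $\dist(\lambda^k+\rho_kg(x^k),-\mcK)\le\|\lambda^k\|+\rho_k\widetilde L_g(r_0+\theta)$ (valid since $\rho_kg(x^*)\in-\mcK$), the inequality $\mcL(\cdot,\lambda^k;\rho_k)\ge f(\cdot)+P(\cdot)+\langle\lambda^k,g(\cdot)\rangle$ on $\mcK^*$, and $\mcK$-convexity — which is exactly what makes $\tr0$ in~\eqref{def-tP} the right quantity and puts every Algorithm~\ref{alg-acc-term} iterate for subproblem $k$ inside $\mcQ$; (b) the Lipschitz modulus of $\nabla f_k=\nabla f+\nabla g\,\Pi_{\mcK^*}(\lambda^k+\rho_kg(\cdot))+\rho_k^{-1}(\cdot-x^k)$ on $\mcQ$ (resp.\ $\chQ$) is at most $\rho_kL$ (resp.\ $\rho_k\tL$) — precisely how $B,C,L$ in~\eqref{def2}--\eqref{def3} are built, via nonexpansiveness of $\Pi_{\mcK^*}$, $\dist(\lambda^k+\rho_kg(x),-\mcK)\le\|\lambda^k\|+\rho_k\|g(x)\|$, $\|\lambda^k\|\le\|\lambda^*\|+r_0+\theta$, and the moduli $L_{\nabla f},L_{\nabla g},L_g$ on $\mcQ$. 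Consequently $\gamma_0L_{\chS}\le\tL$ for each subproblem, so every inner iteration uses at most $1+\lceil\log\tL/\log(1/\delta)\rceil_+$ gradient/prox evaluations, and Theorem~\ref{res-complexity} bounds outer iteration $k$ by a $C_1$-multiple of $M+T'_k$ operations of each kind (plus one $\Pi_{\mcK^*}$), where $T'_k\le\lceil(a+bk)/\sqrt{(\mu+\rho_k^{-1})\rho_k^{-1}\min\{1,\delta\tL^{-1}\}}\,\rceil_+$ and $a,b$ are the $k$-independent logarithmic constants appearing in $C_2,C_3$.

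It then remains to sum over $k=0,\dots,K$. If $\mu=0$, then $\sqrt{(\mu+\rho_k^{-1})\rho_k^{-1}\min\{1,\delta\tL^{-1}\}}\ge\rho_k^{-1}\min\{1,\sqrt{\delta\tL^{-1}}\}$, so $T'_k=O(\rho_0\zeta^k(a+bk))$ and $\sum_{k\le K}T'_k=O\!\big(\rho_0\zeta^{K+1}(a+bK)/(\zeta-1)\big)$; inserting the value of $\zeta^K$ implied by~\eqref{K} and collecting terms yields~\eqref{def-complexity}, with the $(1+(M+1)C_1)(1+K)$ block absorbing the $O(M)$ overhead and the per-outer-iteration projection. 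If $\mu>0$, then instead $\sqrt{(\mu+\rho_k^{-1})\rho_k^{-1}\min\{1,\delta\tL^{-1}\}}\ge\sqrt{\mu/\rho_k}\,\min\{1,\sqrt{\delta\tL^{-1}}\}$, so $T'_k=O(\sqrt{\rho_0\zeta^k/\mu}\,(a+bk))$ and the geometric ratio becomes $\sqrt\zeta$, producing the $\sqrt{\rho_0\zeta/\mu}$ factor and the square-rooted $\max$-terms of~\eqref{strong-complexity}. The \emph{main obstacle} is steps (a)--(b): showing that the subproblem initial gap, the Lipschitz modulus of $\nabla f_k$, and hence the line-search length and the strong-convexity modulus all scale in $\rho_k$ exactly as required so that, after the built-in $\gamma_0=\rho_k^{-1}$ normalization, the per-subproblem complexity from Theorem~\ref{res-complexity} is $O(\zeta^k k)$ (for $\mu=0$) or $O(\sqrt{\zeta^k}\,k)$ (for $\mu>0$) and no worse. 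This forces one to first pin down the nested regions $\ctQ\supseteq\{x^k\}$, $\mcQ$, $\chQ$ (mirroring Lemma~\ref{F-Lipschitz}) and to track carefully how $\|\lambda^k\|$, $\sup_{\mcQ}\|g\|$ and $\widetilde L_g,L_g,L_{\nabla g}$ propagate through the estimates — the genuinely new work relative to \cite{lu2018iteration}, whose analysis relied on compact $\dom(P)$ and globally Lipschitz gradients.
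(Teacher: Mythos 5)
Your proposal is correct and follows essentially the same route as the paper: part (i) via viewing Algorithm~\ref{PPA} as an inexact proximal point method on the monotone KKT operator (the paper packages the resulting estimates by citing Rockafellar and \cite{lu2018iteration}, where you re-derive them directly), and parts (ii)--(iii) by invoking Theorem~\ref{res-complexity} per subproblem with the rescaled data $(\br_k,\rho_k^{-1},\mu+\rho_k^{-1},\eta_k,\tL_k)$ and summing the resulting geometric series. The two technical controls you single out --- the uniform bound $\br_k^2\le\alpha_0^2\tr0^2\rho_k/2$ and the scaling $\rho_k^{-1}\tL_k\le\tL$ of the Lipschitz modulus of $\nabla f_k$ --- are exactly the content of the paper's intermediate lemmas in Subsection~\ref{sec:proof4}.
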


\begin{rem}
Since $1<\zeta<1/\sigma$, it can be seen from Theorem \ref{thm:constr} that Algorithm~\ref{PPA} enjoys an operation complexity  of $\cO(\varepsilon^{-1}\log \varepsilon^{-1})$ and $\cO(\varepsilon^{-1/2}\log \varepsilon^{-1})$ for finding an $\varepsilon$-KKT solution of problems \eqref{conic-p} and \eqref{conic-d} when $f$ is convex and strongly convex on $\dom(P)$, respectively. 
\end{rem}

\section{Numerical results}\label{sec:exp}
In this section we conduct some preliminary experiments to test the performance of our proposed method (Algorithm \ref{PPA}), and compare it with a first-order proximal AL method (FPAL) \cite{lu2018iteration}, the forward-reflected-backward splitting method (FRBS) \cite{mali20} and the modified forward-backward splitting method (MFBS) with an Armijo-Goldstein-type stepsize \cite{tseng2000modified}, respectively. All the algorithms are coded in Matlab and all the computations are performed on a desktop with a 3.60 GHz Intel i7-12700K 12-core processor and 32 GB of RAM.

\subsection{Quadratically constrained quadratic programming with box constraints}\label{sec:qcqp-b}
In this subsection we consider quadratically constrained quadratic programming (QCQP) with box constraints
\begin{align}\label{qcqp-b}
\begin{aligned}
\min&\  \frac{1}{2}x^TAx+b^Tx\\
\mbox{s.t.}&\ \frac{1}{2}x^TB_ix+c_i^Tx+d_i\leq0,\quad i=1,\dots,m,\\
&\ -1\leq x_i \leq1, \quad i=1,\dots,n,
\end{aligned}
\end{align}
where $A,B_1,\dots,B_m\in\rr^{n\times n}$ are positive semidefinite matrices, $b, c_1,\dots,c_m\in\rr^{n}$, and $d_1,\dots,d_m\in\rr$.

For each dimension $n$, we set $m=\lceil 0.05n\rceil$ and randomly generate $10$ instances of problem \eqref{qcqp-b}. In particular, we first generate $x^*\in[-1,1]^n$ whose  entries are first independently chosen from the standard normal distribution and then projected to $[-1,1]$, and $\lambda^*\in\rr_+^m$ whose entries are first independently chosen from the normal distribution with mean $1$ and standard deviation $1$ and then projected to $\rr_+$. We then randomly generate an orthogonal matrix $U$ by performing $U=\mathrm{orth}(\mathrm{randn}(n))$, an $n\times n$ diagonal matrix $D$ whose diagonal entries are first independently chosen from the normal distribution with mean $0$ and standard deviation $100$ and then projected to $\rr_+$, and set $A=UDU^T$. Also, we randomly generate an orthogonal matrix $\tU$ by performing $\tU=\mathrm{orth}(\mathrm{randn}(n))$, an $n\times n$ diagonal matrices $\widetilde D$ whose diagonal entries are first independently chosen from the normal distribution with mean $0$ and standard deviation $0.01$ and then projected to $\rr_+$. We set $B_1=\tU\widetilde D\tU^T$, and generate $B_i,\ i=2,\dots, m$ in a similar vein. In addition, we generate $c_i,\ i=1,\dots,m$ independently according the normal distribution with mean $0$ and standard deviation $0.01$. We finally  choose $b$ and $d_i,\ i=1,\dots,m$ so that the KKT conditions of \eqref{qcqp-b} are satisfied at $(x^*, \lambda^*)$, namely $(x^*, \lambda^*)$ is a KKT point of \eqref{qcqp-b}.

Notice that \eqref{qcqp-b} is a special case of \eqref{conic-p} with $f(x)=x^TAx/2+b^Tx$, $P(x)=\mcI_{[-1,1]^n}(x)$, $g_i(x)=x^TB_ix/2+c_i^Tx+d_i,\ i=1,\dots,m$, and $\mcK=\rr_+^m$, where $\mcI_{[-1,1]^n}(\cdot)$ is the indicator function of $[-1,1]^n$. Moreover, $f$ and $P$ are convex, $g$ is $\mcK$-convex, $\dom(P)$ is compact, and $\nabla f$ and $\nabla g$ are (globally) Lipschitz continuous on $\dom(P)$. Consequently, \eqref{qcqp-b} can be suitably solved by Algorithm \ref{PPA} and FPAL \cite{lu2018iteration}. It shall be mentioned that FPAL \cite{lu2018iteration} is only applicable to \eqref{conic-p} with $\dom(P)$ being compact. Our aim is to find a $10^{-2}$-KKT solution of \eqref{qcqp-b} by Algorithm \ref{PPA} and FPAL, and compare their performance. Due to this, we terminate them once a $10^{-2}$-KKT solution is found. Besides, for both methods, we choose zero vector as the initial point and set their parameters as follows.
\begin{itemize}
\item $(\varepsilon,M,\delta,\rho_0,\alpha_0,\eta_0,\zeta,\sigma)=(10^{-2},500,0.9,10,1,0.1,2,0.4)$ for Algorithm \ref{PPA};
\item $\epsilon=10^{-2}$, $\rho_k=\rho_0\zeta^k$, $\eta_k=\eta_0\sigma^k$ with $(\rho_0,\eta_0,\zeta,\sigma)=(10,0.1,2,0.4)$ for FPAL \cite{lu2018iteration}.
\end{itemize}

The computational results of Algorithm \ref{PPA} and FPAL for the instances generated above are presented in Table \ref{t-qcqp-b}. In detail, the value of $n$ is listed in the first column. For each $n$, the average number of gradient evaluations and the average CPU time (in seconds) of Algorithm \ref{PPA} and FPAL over $10$ random instances are given in the rest of the columns. One can observe that our method, namely Algorithm \ref{PPA}, significantly outperforms FPAL in terms of average number of gradient evaluations and average CPU time. This phenomenon is not surprising because Algorithm \ref{PPA} uses a local Lipschitz constant of the gradient of the smooth component of the AL functions, while FPAL uses its global Lipschitz constant that can be excessively conservative.

\begin{table}[H]
\centering
\begin{tabular}{c||ll||ll}
\hline
&\multicolumn{2}{c||}{Gradient evaluations}&\multicolumn{2}{c}{CPU time (seconds)}\\
$n$ &Algorithm \ref{PPA}&FPAL&Algorithm \ref{PPA}&FPAL\\\hline
100&$4.97\times10^3$&$3.96\times10^3$&0.20&0.21\\
200&$4.57\times10^3$&$5.37\times10^3$&6.35&12.07\\
300&$4.47\times10^3$&$6.23\times10^3$&12.53&27.88\\
400&$4.18\times10^3$&$8.00\times10^3$&33.12&93.04\\
500&$4.18\times10^3$&$9.75\times10^3$&22.65&248.28\\
600&$4.18\times10^3$&$1.32\times10^4$&58.65&617.72\\
700&$4.08\times10^3$&$1.22\times10^4$&122.52&889.71\\
800&$4.08\times10^3$&$1.57\times10^4$&186.23&1551.13\\
900&$4.08\times10^3$&$1.94\times10^4$&305.97&2737.96\\
1000&$4.08\times10^3$&$2.30\times10^4$&429.38&4398.44\\
\hline
\end{tabular}
\caption{Numerical results for problem \eqref{qcqp-b}}\label{t-qcqp-b}
\end{table}

\subsection{Quadratically constrained quadratic programming}
In this subsection we consider the quadratically constrained quadratic programming (QCQP)
\begin{align}\label{qcqp}
\begin{aligned}
\min&\  \frac{1}{2}x^TAx+b^Tx\\
\mbox{s.t.}&\ \frac{1}{2}x^TB_ix+c_i^Tx+d_i\leq0,\quad i=1,\dots,m,
\end{aligned}
\end{align}
where $A,B_1,\dots,B_m\in\rr^{n\times n}$ are positive semidefinite matrices, $b, c_1,\dots,c_m\in\rr^{n}$, and $d_1,\dots,d_m\in\rr$.

For each dimension $n$, we set $m=\lceil 0.05n\rceil$ and randomly generate $10$ instances of problem \eqref{qcqp}. In particular, we first generate $x^*\in\rr^n$ with all the entries independently chosen from the standard normal distribution, and $\lambda^*\in\rr_+^m$ whose entries are first independently chosen from the normal distribution with mean $1$ and standard deviation $1$ and then projected to $\rr_+$. We then generate $A$ and $B_i,c_i,\ i=1,\dots,m$ in the same manner as described in Subsection \ref{sec:qcqp-b}. We finally  choose $b$ and $d_i,\ i=1,\dots,m$ so that the KKT conditions of \eqref{qcqp} are satisfied at $(x^*, \lambda^*)$, namely $(x^*, \lambda^*)$ is a KKT point of \eqref{qcqp}.

Notice that \eqref{qcqp} is a special case of \eqref{conic-p} with $f(x)=x^TAx/2+b^Tx$, $P(x)=0$, $g_i(x)=x^TB_ix/2+c_i^Tx+d_i,\ i=1,\dots,m$, and $\mcK=\rr_+^m$. Clearly, $f$ and $P$ are convex, $g$ is $\mcK$-convex, $\nabla f$ and $\nabla g$ are Lipschitz continuous, while $\dom(P)=\rr^n$ is unbounded. As a result, \eqref{qcqp} can be suitably solved by Algorithm \ref{PPA} but not FPAL \cite{lu2018iteration}, since the latter method is only applicable to \eqref{conic-p} with $\dom(P)$ being compact. On the other hand, it is not hard to observe that problem \eqref{qcqp} and its dual can be solved as the monotone inclusion problem
\begin{align}\label{conic-inclusion}
0\in F(x,\lambda)+B(x,\lambda),
\end{align}
where 
\begin{align*}
F(x,\lambda)=\begin{pmatrix}
\nabla f(x)+\nabla g(x)\lambda\\
-g(x)\end{pmatrix},\quad
B(x,\lambda)=\begin{pmatrix}
0\\
\cN_{\rr_+^m}(\lambda)
\end{pmatrix}.
\end{align*}
One can also observe that $F$ is monotone and locally Lipschitz continuous on $\cl(\dom B)$ and $B$ is maximal monotone.  As a result,  problem \eqref{conic-inclusion} and hence \eqref{qcqp} can be suitably solved by FRBS \cite{mali20} and MFBS \cite{tseng2000modified}.  Our aim is to find a $10^{-2}$-KKT solution of \eqref{qcqp} by Algorithm \ref{PPA}, FRBS and MFBS, and compare their performance. Due to this, we terminate them once a $10^{-2}$-KKT solution is found. In addition, for all the methods, we choose zero vector as the initial point and set their parameters as follows.
\begin{itemize}
\item $(\varepsilon,M,\delta,\rho_0,\alpha_0,\eta_0,\zeta,\sigma)=(10^{-2},500,0.9,10,1,0.1,2,0.4)$ for Algorithm \ref{PPA};
\item $(\lambda_0,\delta,\sigma)=(0.1,0.5,0.9)$ for FRBS \cite{mali20};
\item $(\sigma,\theta,\beta)=(0.1,0.5,0.9)$ for MFBS \cite{tseng2000modified}.
\end{itemize}

The computational results of Algorithm \ref{PPA}, FRBS and MFBS for the instances generated above are presented in Table \ref{t-qcqp}. In detail, the value of $n$ is listed in the first column. For each $n$, the average number of gradient evaluations and the average CPU time (in seconds) for these methods over $10$ random instances are given in the rest of the columns. One can observe that our method, namely Algorithm \ref{PPA}, significantly outperforms the other two methods in terms of average number of gradient evaluations and average CPU time. 
This phenomenon may not be surprising because our method enjoys a nearly optimal operation complexity while the other two methods lack complexity guarantees.  
\begin{table}[H]
\centering
\begin{tabular}{c||lll||lll}
\hline
&\multicolumn{3}{c||}{Gradient evaluations}&\multicolumn{3}{c}{CPU time (seconds)}\\
$n$ &Algorithm \ref{PPA}&FRBS&MFBS&Algorithm \ref{PPA}&FRBS&MFBS\\\hline
100&$5.02\times10^3$&$1.89\times10^5$&$1.62\times10^5$&0.17&1.80&1.40\\
200&$5.01\times10^3$&$1.38\times10^5$&$1.36\times10^5$&7.16&80.49&77.47\\
300&$4.68\times10^3$&$1.11\times10^5$&$1.02\times10^5$&12.10&147.19&132.95\\
400&$4.28\times10^3$&$9.76\times10^4$&$8.33\times10^4$&32.52&387.91&323.89\\
500&$4.08\times10^3$&$7.12\times10^4$&$6.16\times10^4$&23.18&147.25&125.38\\
600&$4.18\times10^3$&$7.37\times10^4$&$6.07\times10^4$&53.51&427.64&346.31\\
700&$4.08\times10^3$&$6.59\times10^4$&$5.33\times10^4$&101.14&637.33&518.73\\
800&$4.08\times10^3$&$5.69\times10^4$&$4.72\times10^4$&152.60&782.77&629.80\\
900&$4.09\times10^3$&$5.46\times10^4$&$4.54\times10^4$&236.99&1359.02&1138.14\\
1000&$4.08\times10^3$&$4.63\times10^4$&$3.92\times10^4$&384.96&1854.11&1568.45\\
\hline
\end{tabular}
\caption{Numerical results for problem \eqref{qcqp}}\label{t-qcqp}
\end{table}

\section{Proof of the main results} \label{sec:proof}

In this section we provide a proof of our main results presented in Sections \ref{alg-unconstr} and \ref{alg-constr}, which are particularly Theorems \ref{inner}-\ref{thm:constr}.

\subsection{Proof of the main results in Subsection \ref{alg-unconstr1}}
\label{sec:proof1}

In this subsection we first establish several technical lemmas and then use them to prove Theorems \ref{inner} and \ref{t0}.

\begin{lemma}\label{l3}
Suppose that $\alpha_t$, $\beta_t$ and $\gamma_t$ are generated by Algorithm~\ref{alg-acc} for some $t \ge 1$. Then the following statements hold.
\begin{enumerate}[label=(\roman*)]
\item $\sqrt{\mu\gamma_t}\leq\alpha_t\leq1$ and $\alpha_t^2\gamma_{t}^{-1}\leq\alpha_{t-1}^2\gamma_{t-1}^{-1}$.\label{unc-l3}
\item $\beta_t=\mu\gamma_t\alpha_t^{-1}\in[0,1]$.
\end{enumerate}
\end{lemma}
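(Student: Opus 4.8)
\textbf{Proof proposal for Lemma~\ref{l3}.}

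The plan is to analyze the defining equation \eqref{unc-equa}, namely $\gamma_{t-1}\alpha_t^2=(1-\alpha_t)\alpha_{t-1}^2\gamma_t+\mu\alpha_t\gamma_t\gamma_{t-1}$, treating it as a quadratic in $\alpha_t$. The existence of a root in $(0,1]$ is already recorded in Remark~1(iii) of the excerpt (via the sign change of $\phi$), and the backtracking guarantees $\gamma_t,\gamma_{t-1}\in(0,\gamma_0]$ with $\gamma_0\le 1/\mu$; I will take all of this as given. So the substantive content is the two inequalities in (i) and the membership in (ii).

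For $\sqrt{\mu\gamma_t}\le\alpha_t$: I would rewrite \eqref{unc-equa} as $\gamma_{t-1}\alpha_t^2-\mu\alpha_t\gamma_t\gamma_{t-1}=(1-\alpha_t)\alpha_{t-1}^2\gamma_t\ge 0$, using $\alpha_t\le 1$. Dividing by $\gamma_{t-1}>0$ gives $\alpha_t^2-\mu\gamma_t\alpha_t\ge 0$, i.e. $\alpha_t(\alpha_t-\mu\gamma_t)\ge 0$, hence $\alpha_t\ge\mu\gamma_t$. Since $\mu\gamma_t\le\mu\gamma_0\le 1$, we also get $\sqrt{\mu\gamma_t}=\sqrt{\mu\gamma_t}\ge\mu\gamma_t$... wait — more carefully: $\alpha_t\ge\mu\gamma_t$ and $\mu\gamma_t\le 1$ together give $\alpha_t^2\ge(\mu\gamma_t)^2$ is the wrong direction. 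Instead: from $\alpha_t(\alpha_t-\mu\gamma_t)\ge 0$ and $\alpha_t>0$ we directly get $\alpha_t\ge\mu\gamma_t$; to upgrade to $\alpha_t\ge\sqrt{\mu\gamma_t}$ I should instead keep the quadratic intact. Write \eqref{unc-equa} as $\gamma_{t-1}\alpha_t^2+( \alpha_{t-1}^2\gamma_t-\mu\gamma_t\gamma_{t-1})\alpha_t-\alpha_{t-1}^2\gamma_t=0$ — actually the cleanest route is: from the displayed equation, $\gamma_{t-1}\alpha_t^2\ge\mu\alpha_t\gamma_t\gamma_{t-1}$ only yields $\alpha_t\ge\mu\gamma_t$, which is weaker than claimed. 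The right manipulation is to also use $(1-\alpha_t)\alpha_{t-1}^2\gamma_t\ge 0$ differently: drop the $\mu$ term to get $\gamma_{t-1}\alpha_t^2\le(1-\alpha_t)\alpha_{t-1}^2\gamma_t+\mu\alpha_t\gamma_t\gamma_{t-1}$ — that's an equality. I will instead prove it by induction, using the hypothesis $\alpha_{t-1}^2\gamma_{t-1}^{-1}\ge\mu$ (equivalently $\alpha_{t-1}\ge\sqrt{\mu\gamma_{t-1}}$, valid at $t=1$ since $\alpha_0\ge\sqrt{\mu\gamma_0}$). Then $\alpha_{t-1}^2\gamma_t\ge\mu\gamma_{t-1}\gamma_t$, so \eqref{unc-equa} gives $\gamma_{t-1}\alpha_t^2\ge(1-\alpha_t)\mu\gamma_{t-1}\gamma_t+\mu\alpha_t\gamma_t\gamma_{t-1}=\mu\gamma_t\gamma_{t-1}$, whence $\alpha_t^2\ge\mu\gamma_t$, i.e. $\alpha_t\ge\sqrt{\mu\gamma_t}$. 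This is the key inductive step and the main (small) obstacle — recognizing that one must carry $\alpha_{t-1}\ge\sqrt{\mu\gamma_{t-1}}$ as the induction hypothesis rather than proving it in isolation.

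For the second inequality of (i), $\alpha_t^2\gamma_t^{-1}\le\alpha_{t-1}^2\gamma_{t-1}^{-1}$: divide \eqref{unc-equa} by $\gamma_t\gamma_{t-1}$ to get $\alpha_t^2\gamma_t^{-1}=(1-\alpha_t)\alpha_{t-1}^2\gamma_{t-1}^{-1}+\mu\alpha_t\le(1-\alpha_t)\alpha_{t-1}^2\gamma_{t-1}^{-1}+\alpha_t\alpha_{t-1}^2\gamma_{t-1}^{-1}=\alpha_{t-1}^2\gamma_{t-1}^{-1}$, where the inequality uses $\mu\le\alpha_{t-1}^2\gamma_{t-1}^{-1}$ (the induction hypothesis again) together with $\alpha_t>0$. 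For (ii), $\beta_t=\mu\gamma_t\alpha_t^{-1}\ge 0$ is immediate, and $\beta_t\le 1$ follows from $\alpha_t\ge\sqrt{\mu\gamma_t}\ge\mu\gamma_t$ (the last step since $\mu\gamma_t\le\mu\gamma_0\le 1$), giving $\mu\gamma_t\le\alpha_t$, i.e. $\beta_t\le 1$. I would present this as a single induction on $t$: the base case $t=1$ uses the input conditions $\alpha_0\in[\sqrt{\mu\gamma_0},1]$ and $\gamma_0\in(0,1/\mu]$, and each inductive step bundles the three assertions together, with the ordering: first derive $\alpha_t\ge\sqrt{\mu\gamma_t}$ (hence $\le$ part of Lemma trivially $\alpha_t\le1$ from Remark~1(iii)), then the monotonicity $\alpha_t^2\gamma_t^{-1}\le\alpha_{t-1}^2\gamma_{t-1}^{-1}$, then (ii).
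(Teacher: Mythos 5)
Your final argument is correct and is essentially the paper's own proof: an induction carrying $\alpha_{i-1}\ge\sqrt{\mu\gamma_{i-1}}$ as the hypothesis to lower-bound $(1-\alpha_t)\alpha_{t-1}^2\gamma_t$ by $(1-\alpha_t)\mu\gamma_{t-1}\gamma_t$ in \eqref{unc-equa}, then the same substitution $\mu\le\alpha_{t-1}^2\gamma_{t-1}^{-1}$ for the monotonicity claim, and $\beta_t\le 1$ from $\alpha_t\ge\sqrt{\mu\gamma_t}$ together with $\mu\gamma_t\le\mu\gamma_0\le 1$. The false starts in your first paragraph are correctly abandoned and do not affect the validity of the argument you settle on.
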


\begin{proof}
(i) We first prove by induction that $\sqrt{\mu\gamma_i}\leq\alpha_i\leq1$ for all $1\leq i\leq t$. Indeed, notice from Algorithm~\ref{alg-acc} that $\sqrt{\mu\gamma_0}\leq\alpha_0\leq1$. Suppose that $\sqrt{\mu\gamma_{i-1}}\leq\alpha_{i-1}\leq1$ for some 
$1\leq i<t$. By this, \eqref{unc-equa}, and $\alpha_i\in(0,1]$, one has
\[
\gamma_{i-1}\alpha_i^2=(1-\alpha_i)\alpha_{i-1}^2\gamma_i+\mu\alpha_i\gamma_i\gamma_{i-1}\geq(1-\alpha_i)\mu\gamma_{i-1}\gamma_i+\mu\alpha_i\gamma_i\gamma_{i-1}=\mu\gamma_i\gamma_{i-1},
\]
which together with $\gamma_{i-1}>0$ yields $\sqrt{\mu\gamma_i}\leq\alpha_i\leq1$. Hence, the induction is completed and $\sqrt{\mu\gamma_t}\leq\alpha_t\leq1$ holds as desired.

We next show that $\alpha_t^2\gamma_{t}^{-1}\leq\alpha_{t-1}^2\gamma_{t-1}^{-1}$. Indeed, by $\sqrt{\mu\gamma_{t-1}}\leq\alpha_{t-1}$, $\gamma_{t-1}, \gamma_t>0$, and \eqref{unc-equa}, one has
\[
\gamma_{t-1}\alpha_t^2=(1-\alpha_t)\alpha_{t-1}^2\gamma_t+\mu\alpha_t\gamma_t\gamma_{t-1}\leq(1-\alpha_t)\alpha_{t-1}^2\gamma_t+\alpha_t\gamma_t\alpha_{t-1}^2=\gamma_t\alpha_{t-1}^2,
\]
which implies that the conclusion holds.

(ii) Notice from Algorithm~\ref{alg-acc} that $\beta_t=\mu\gamma_t\alpha_t^{-1}$. By this and statement (i), one has 
\[
0\leq\beta_t=\mu\gamma_t\alpha_t^{-1}\leq\sqrt{\mu\gamma_t}\leq 1.
\]
\end{proof}

\begin{lemma}
Suppose that $x^{t+1}$, $y^t$ and $z^{t+1}$ are generated by Algorithm~\ref{alg-acc} for some $t \ge 1$. Then for all $x\in \dom( P)$ and $ P'(z^{t+1})\in\partial  P(z^{t+1})$, we have
\begin{equation}\label{unc-l1}
\gamma_t\langle  P'(z^{t+1}),z^{t+1}-x\rangle\leq\gamma_t\langle\nabla f(y^t),x-z^{t+1}\rangle+\frac{1}{2}\alpha_t\beta_t\|x-y^t\|^2+\frac{1}{2}\alpha_t(1-\beta_t)\|x-z^t\|^2-\frac{1}{2}\alpha_t\|x-z^{t+1}\|^2+R_t,
\end{equation}
where
\begin{equation}\label{def-Rt}
R_t=\frac{1}{2}\mu\gamma_t(\alpha_t^{-1}-1)\|x^t-y^t\|^2-\frac{1}{2\alpha_t}\|x^{t+1}-y^t\|^2.
\end{equation}
\end{lemma}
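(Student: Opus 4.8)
The plan is to derive \eqref{unc-l1} directly from the optimality condition of the proximal subproblem \eqref{unc-prox} defining $z^{t+1}$, then absorb the ``gradient-step'' terms using the line-search inequality \eqref{unc-line} and the definitions of $\beta_t$, $x^{t+1}$, $y^t$. First I would write the first-order optimality condition for \eqref{unc-prox}: there exists $P'(z^{t+1})\in\partial P(z^{t+1})$ with
\[
\gamma_t\nabla f(y^t)+\gamma_t P'(z^{t+1})+\alpha_t\bigl(z^{t+1}-\beta_t y^t-(1-\beta_t)z^t\bigr)=0 .
\]
Taking the inner product of this identity with $x-z^{t+1}$ for an arbitrary $x\in\dom(P)$ and rearranging gives
\[
\gamma_t\langle P'(z^{t+1}),z^{t+1}-x\rangle=\gamma_t\langle\nabla f(y^t),x-z^{t+1}\rangle+\alpha_t\langle z^{t+1}-\beta_t y^t-(1-\beta_t)z^t,\,x-z^{t+1}\rangle .
\]

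Next I would convert the last inner product into squared norms via the three-point (law-of-cosines) identity $\langle a-b,\,c-a\rangle=\tfrac12\|b-c\|^2-\tfrac12\|a-b\|^2-\tfrac12\|a-c\|^2$, applied with the convex combination $\beta_t y^t+(1-\beta_t)z^t$ playing the role of $b$. Concretely, writing $w^t:=\beta_t y^t+(1-\beta_t)z^t$, we get
\[
\langle z^{t+1}-w^t,\,x-z^{t+1}\rangle=\tfrac12\|x-w^t\|^2-\tfrac12\|z^{t+1}-w^t\|^2-\tfrac12\|x-z^{t+1}\|^2,
\]
and then use convexity of $\|\cdot\|^2$ to bound $\|x-w^t\|^2\le\beta_t\|x-y^t\|^2+(1-\beta_t)\|x-z^t\|^2$. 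This already produces the three ``distance'' terms $\tfrac12\alpha_t\beta_t\|x-y^t\|^2+\tfrac12\alpha_t(1-\beta_t)\|x-z^t\|^2-\tfrac12\alpha_t\|x-z^{t+1}\|^2$ on the right-hand side of \eqref{unc-l1}, leaving a residual term $-\tfrac12\alpha_t\|z^{t+1}-w^t\|^2$ to be identified with $R_t$.

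The remaining work — and the one genuinely fiddly step — is to show $-\tfrac12\alpha_t\|z^{t+1}-w^t\|^2\le R_t$ where $R_t$ is given by \eqref{def-Rt}. The key algebraic facts are: (a) from \eqref{unc-updx}, $x^{t+1}-y^t=\alpha_t(z^{t+1}-z^t) + (1-\alpha_t)(x^t-y^t)$ and, using \eqref{unc-iter} to express $y^t$ as a combination of $x^t$ and $z^t$, one checks that $x^{t+1}-y^t=\alpha_t(1-\alpha_t\beta_t)^{-1}(z^{t+1}-w^t)$ — so $\|z^{t+1}-w^t\|$ is proportional to $\|x^{t+1}-y^t\|$; (b) one must also relate $w^t-z^{t+1}$ to $x^t-y^t$ to recover the $\mu\gamma_t(\alpha_t^{-1}-1)\|x^t-y^t\|^2$ term, which uses $\beta_t=\mu\gamma_t\alpha_t^{-1}$ and the definition of $y^t$. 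I would expand $\|z^{t+1}-w^t\|^2$ carefully in terms of $x^{t+1}-y^t$ and $x^t-y^t$ (both of which are explicit affine expressions once \eqref{unc-iter}--\eqref{unc-updx} are substituted), and collect coefficients; the $\beta_t$-dependent cross terms should cancel or combine to give exactly \eqref{def-Rt}. I do not expect to need the line-search inequality \eqref{unc-line} for \emph{this} lemma — it is \eqref{unc-l1} as stated, purely an identity/inequality coming from the prox step and the update formulas — so the entire proof is a bookkeeping computation whose only subtlety is keeping the $(1-\alpha_t\beta_t)$ denominators and the $\beta_t=\mu\gamma_t/\alpha_t$ substitution straight.
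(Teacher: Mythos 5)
Your route is the same as the paper's: the optimality condition of the prox step \eqref{unc-prox}, the three-point identity applied to $\alpha_t\langle z^{t+1}-w^t,\,x-z^{t+1}\rangle$ with $w^t=\beta_ty^t+(1-\beta_t)z^t$, convexity of $\|\cdot\|^2$ to distribute $w^t$ over its $y^t$ and $z^t$ parts, and finally the identification of the leftover term with $R_t$ through the relation between $x^{t+1}-y^t$ and $z^{t+1}-w^t$. (The paper splits the inner product into its $\beta_t$ and $(1-\beta_t)$ pieces \emph{before} applying the law of cosines and invokes convexity of $\|\cdot\|^2$ on $\|z^{t+1}-w^t\|^2$ rather than on $\|x-w^t\|^2$; the two orderings are equivalent.) You are also correct that the line-search inequality \eqref{unc-line} plays no role in this lemma.

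Two corrections to your last step, the second of which would genuinely derail you as written. First, the proportionality constant is wrong: from \eqref{unc-iter} one gets $x^t-y^t=\alpha_t(1-\alpha_t)^{-1}(1-\beta_t)(y^t-z^t)$, and substituting this into $x^{t+1}-y^t=(1-\alpha_t)(x^t-y^t)+\alpha_t(z^{t+1}-y^t)$ yields $x^{t+1}-y^t=\alpha_t(z^{t+1}-w^t)$ \emph{exactly}; there is no $(1-\alpha_t\beta_t)^{-1}$ factor (and the intermediate formula you quote from \eqref{unc-updx} should read $\alpha_t(z^{t+1}-y^t)$, not $\alpha_t(z^{t+1}-z^t)$). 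Second, your item (b) chases something that is not there: the residual you are left with is exactly $-\tfrac{1}{2}\alpha_t\|z^{t+1}-w^t\|^2=-\tfrac{1}{2\alpha_t}\|x^{t+1}-y^t\|^2$, and no amount of collecting cross terms will ``recover'' the term $\tfrac12\mu\gamma_t(\alpha_t^{-1}-1)\|x^t-y^t\|^2$, because \eqref{unc-l1} is an inequality rather than an identity and that term is pure nonnegative slack ($\mu\ge0$, $\gamma_t>0$, $\alpha_t\in(0,1]$), inserted into $R_t$ only so that it can later absorb the strong-convexity contribution in the proof of Lemma \ref{l4}. Once you observe $-\tfrac{1}{2\alpha_t}\|x^{t+1}-y^t\|^2\le R_t$, the proof is finished; with these two repairs your argument is complete and coincides with the paper's.
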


\begin{proof}
 By the optimality condition of \eqref{unc-prox}, one has
\begin{equation*}
\langle\gamma_t\nabla f(y^t)+\gamma_t P'(z^{t+1})+\alpha_t(z^{t+1}-\beta_ty^t-(1-\beta_t)z^t),x-z^{t+1}\rangle\geq0
\end{equation*}
for all $x\in \dom( P)$ and $ P'(z^{t+1})\in\partial  P(z^{t+1})$. It follows from this relation that
\begin{align}
\gamma_t\langle  P'(z^{t+1}),z^{t+1}-x\rangle\leq\ &\gamma_t\langle\nabla f(y^t),x-z^{t+1}\rangle+\alpha_t\langle z^{t+1}-\beta_t y^t-(1-\beta_t)z^t,x-z^{t+1}\rangle \notag\\
=\ &\gamma_t\langle\nabla f(y^t),x-z^{t+1}\rangle+\alpha_t\beta_t\langle z^{t+1}-y^t,x-z^{t+1}\rangle
+\alpha_t(1-\beta_t)\langle z^{t+1}-z^t,x-z^{t+1}\rangle \notag \\
=\ &\gamma_t\langle\nabla f(y^t),x-z^{t+1}\rangle+\frac{1}{2}\alpha_t\beta_t\left(\|x-y^t\|^2-\|x-z^{t+1}\|^2-\|y^t-z^{t+1}\|^2\right) \notag \\
&+\frac{1}{2}\alpha_t(1-\beta_t)\left(\|x-z^t\|^2-\|x-z^{t+1}\|^2-\|z^t-z^{t+1}\|^2\right) \notag \\
=\ &\gamma_t\langle\nabla f(y^t),x-z^{t+1}\rangle+\frac{1}{2}\alpha_t\beta_t\|x-y^t\|^2+\frac{1}{2}\alpha_t(1-\beta_t)\|x-z^t\|^2 \notag \\
&-\frac{1}{2}\alpha_t\|x-z^{t+1}\|^2+Q_t, \label{Psi-bdd}
\end{align}
where
\begin{equation}\label{def-Qt}
Q_t = -\frac{1}{2}\alpha_t\beta_t\|y^t-z^{t+1}\|^2-\frac{1}{2}\alpha_t(1-\beta_t)\|z^t-z^{t+1}\|^2.
\end{equation}

We next show that $Q_t \le R_t$. Indeed, it follows from \eqref{unc-iter} that 
\beq \label{l2-e2}
x^t-y^t=\alpha_t(1-\alpha_t)^{-1}(1-\beta_t)(y^t-z^t),
\eeq
which together with \eqref{unc-updx} implies that
\begin{align}
x^{t+1}-y^t=\ &(1-\alpha_t)x^t+\alpha_tz^{t+1}-y^t = (1-\alpha_t)(x^t-y^t)+\alpha_tz^{t+1}-\alpha_t y^t\notag\\
\overset{\eqref{l2-e2}}{=}\ &\alpha_t(1-\beta_t)(y^t-z^t)+\alpha_tz^{t+1}-\alpha_t y^t
= \alpha_t\left(z^{t+1}-\beta_t y^t - (1-\beta_t)z^t\right). \label{l2-e3}
\end{align}
Using this relation, $\beta_t\in[0,1]$, and the convexity of $\|\cdot\|^2$, we obtain 
\[
\alpha_t^{-2}\|x^{t+1}-y^t\|^2 \overset{\eqref{l2-e3}}{=} \|z^{t+1}-\beta_t y^t - (1-\beta_t)z^t\|^2 \le \beta_t   \|z^{t+1}-y^t\|^2 + (1-\beta_t)\|z^{t+1}-z^t\|^2.
\]
By this, \eqref{def-Rt}, \eqref{def-Qt}, and $\alpha_t\in(0,1]$, one has
\begin{align*}
2\alpha_t^{-1}(Q_t-R_t)=\ &-\beta_t\|y^t-z^{t+1}\|^2-(1-\beta_t)\|z^t-z^{t+1}\|^2+\alpha_t^{-2}\|x^{t+1}-y^t\|^2-\mu\gamma_t\alpha_t^{-2}(1-\alpha_t)\|x^t-y^t\|^2\\
\le \ &-\beta_t\|y^t-z^{t+1}\|^2-(1-\beta_t)\|z^t-z^{t+1}\|^2+\alpha_t^{-2}\|x^{t+1}-y^t\|^2 \leq0,
\end{align*}
which along with $\alpha_t>0$ implies that $Q_t \le R_t$.

The conclusion of this Lemma directly follows from \eqref{Psi-bdd} and $Q_t \le R_t$.
\end{proof}

\begin{lemma}\label{l4}
Suppose that $x^{t+1}$, $y^t$ and $z^{t+1}$ are generated by Algorithm~\ref{alg-acc} for some $t \ge 1$. Then for any $x\in \dom( P)$, we have
\begin{equation}\label{unc-l4}
F(x^{t+1})-F(x)+\frac{\alpha_t^2}{2\gamma_t}\|x-z^{t+1}\|^2\leq\prod_{i=1}^t(1-\alpha_i)\left(F(x^1)-F(x)+\frac{\alpha^2_{0}}{2\gamma_{0}}\|x-x^1\|^2\right).
\end{equation}
\end{lemma}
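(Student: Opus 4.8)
The plan is to establish the recursion in \eqref{unc-l4} by a standard telescoping argument built on the estimate \eqref{unc-l1}, but with the crucial feature that the parameter sequences are \emph{non-constant} (since $\gamma_t$ is chosen adaptively by backtracking). First I would fix $x\in\dom(P)$ and, for a chosen subgradient $P'(z^{t+1})\in\partial P(z^{t+1})$, combine three ingredients at iteration $t$: (a) the convexity inequality \eqref{convex-1} for $f$ evaluated at $y^t$ with the point $x$, giving $f(x)\ge f(y^t)+\langle\nabla f(y^t),x-y^t\rangle+\tfrac{\mu}{2}\|x-y^t\|^2$; (b) the line-search inequality \eqref{unc-line}, which yields the descent estimate $f(x^{t+1})\le f(y^t)+\langle\nabla f(y^t),x^{t+1}-y^t\rangle+\tfrac{1}{2\gamma_t}\|x^{t+1}-y^t\|^2$; and (c) the prox inequality \eqref{unc-l1}. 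Using $x^{t+1}=(1-\alpha_t)x^t+\alpha_t z^{t+1}$ together with the convexity of $P$ to write $P(x^{t+1})\le(1-\alpha_t)P(x^t)+\alpha_t P(z^{t+1})$, and also expressing $x^{t+1}-y^t=\alpha_t(z^{t+1}-\beta_t y^t-(1-\beta_t)z^t)$ as in \eqref{l2-e3}, one should be able to assemble a one-step inequality of the form
\[
F(x^{t+1})-F(x)+\frac{\alpha_t^2}{2\gamma_t}\|x-z^{t+1}\|^2 \le (1-\alpha_t)\left(F(x^t)-F(x)+\frac{\alpha_{t-1}^2}{2\gamma_{t-1}}\|x-z^t\|^2\right),
\]
possibly after absorbing the $R_t$ term and handling the $\|x-y^t\|^2$ and $\|x-z^t\|^2$ coefficients.

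The key algebraic bookkeeping is in the coefficients of the squared-norm terms. After substituting (a)--(c) and using the identity \eqref{l2-e2} relating $x^t-y^t$ to $y^t-z^t$, the terms $\tfrac12\alpha_t\beta_t\|x-y^t\|^2$, $\tfrac12\alpha_t(1-\beta_t)\|x-z^t\|^2$, the $R_t$ contribution $\tfrac12\mu\gamma_t(\alpha_t^{-1}-1)\|x^t-y^t\|^2$, and the $\tfrac{\mu}{2}\|x-y^t\|^2$ from convexity of $f$ must all be reconciled. I expect that $\beta_t=\mu\gamma_t\alpha_t^{-1}$ is designed precisely so that $\alpha_t\beta_t=\mu\gamma_t$, which lets the $\mu$-terms telescope: the combination $\tfrac{\mu}{2}\|x-y^t\|^2+\tfrac12\alpha_t\beta_t\|x-y^t\|^2$-type cancellations should reduce the $y^t$-dependent terms using the three-point identity $\|x-y^t\|^2 = (\text{something in }\|x-x^t\|^2,\|x-z^t\|^2)$ via \eqref{l2-e2}. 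Then the coefficient of $\|x-z^t\|^2$ on the right should work out to $(1-\alpha_t)\alpha_{t-1}^2/(2\gamma_{t-1})$ exactly because of the defining equation \eqref{unc-equa}: indeed $\gamma_{t-1}\alpha_t^2=(1-\alpha_t)\alpha_{t-1}^2\gamma_t+\mu\alpha_t\gamma_t\gamma_{t-1}$ rearranges to $\tfrac{\alpha_t^2}{\gamma_t}=(1-\alpha_t)\tfrac{\alpha_{t-1}^2}{\gamma_{t-1}}+\mu\alpha_t$, which is exactly the relation needed to close the recursion while accounting for the extra $\mu\alpha_t$ coming from strong convexity. Once the one-step inequality is in hand, I would iterate it from $i=t$ down to $i=1$, multiplying through by $\prod_{j>i}(1-\alpha_j)$ at each stage, and use $z^1=x^1$ and $\alpha_0^2/\gamma_0$ as the initial coefficient to obtain \eqref{unc-l4}; the nonnegativity of each $\alpha_i\in(0,1]$ (Lemma \ref{l3}) ensures the telescoping factors are well-behaved.

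The main obstacle I anticipate is the careful matching of the squared-norm coefficients across the three auxiliary points $x^t$, $y^t$, $z^t$: one has to expand $y^t$ (defined by \eqref{unc-iter}) as an affine combination of $x^t$ and $z^t$, substitute into every $\|x-y^t\|^2$ and $\|x^t-y^t\|^2$ term, and verify that after using \eqref{unc-equa} and $\alpha_t\beta_t=\mu\gamma_t$ nothing with the wrong sign survives — in particular that the residual $R_t$ (via $Q_t\le R_t$ from the previous lemma) and the leftover negative terms like $-\tfrac{1}{2\alpha_t}\|x^{t+1}-y^t\|^2$ combine with the $\tfrac{1}{2\gamma_t}\|x^{t+1}-y^t\|^2$ from the line search to give something nonpositive. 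This is where the scaling $2\gamma_t(\cdots)\le\|x^{t+1}-y^t\|^2$ in \eqref{unc-line} is exactly calibrated. A secondary, more routine point is keeping track of whether the recursion is stated with $\alpha_{t-1}^2/\gamma_{t-1}$ or with $\alpha_t^2\gamma_t^{-1}$ bounded by it (Lemma \ref{l3}\ref{unc-l3}), but that inequality only strengthens the bound and is not needed for the identity itself. I would present the one-step inequality as a displayed chain of equalities and inequalities, then a short induction paragraph for the telescoping.
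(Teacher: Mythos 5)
Your proposal follows essentially the same route as the paper's proof: combine the convexity/strong-convexity inequality \eqref{convex-1} at $y^t$ (against both $x^t$ and $x$), the line-search bound \eqref{unc-line}, the prox inequality \eqref{unc-l1} with convexity of $P$ along $x^{t+1}=(1-\alpha_t)x^t+\alpha_t z^{t+1}$, observe that $\alpha_t\beta_t=\mu\gamma_t$ makes the $\|x-y^t\|^2$ terms cancel exactly (no three-point expansion of $y^t$ is actually needed) and that $R_t$ annihilates the $\frac{1}{2\alpha_t}\|x^{t+1}-y^t\|^2$ and $\|x^t-y^t\|^2$ contributions, then close the recursion via \eqref{unc-equa} in the form $\gamma_{t-1}\alpha_t^2(1-\beta_t)=(1-\alpha_t)\alpha_{t-1}^2\gamma_t$ and telescope using $z^1=x^1$. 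All the key mechanisms you identify are exactly those used in the paper, so the plan is correct.
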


\begin{proof}
 By \eqref{unc-updx}, \eqref{unc-l1}, and the convexity of $ P$, one has that for all $ P'(z^{t+1})\in\partial  P(z^{t+1})$, 
\begin{align}
\gamma_t \alpha_t^{-1} P(x^{t+1})\leq & \ \gamma_t \alpha_t^{-1}\left((1-\alpha_t) P(x^{t})+\alpha_t  P(z^{t+1})\right) = \gamma_t(\alpha_t^{-1}-1) P(x^t)+\gamma_t P(z^{t+1})\notag\\
\leq & \ \gamma_t(\alpha_t^{-1}-1) P(x^t)+\gamma_t P(x)+\gamma_t\langle  P'(z^{t+1}),z^{t+1}-x\rangle\notag\\
\overset{\eqref{unc-l1}}{\leq} & \ \gamma_t(\alpha_t^{-1}-1) P(x^t)+\gamma_t P(x)+\gamma_t\langle\nabla f(y^t),x-z^{t+1}\rangle\notag\\
\label{l4-e1}
& \ +\frac{1}{2}\alpha_t(1-\beta_t)\|x-z^t\|^2
-\frac{1}{2}\alpha_t\|x-z^{t+1}\|^2+\frac{1}{2}\alpha_t\beta_t\|x-y^t\|^2+R_t.
\end{align}
By \eqref{convex-1}, \eqref{unc-updx}, $\alpha_t \in (0,1]$, and $\gamma_t>0$, one has that for all $x\in \dom( P)$, 
\begin{align}
&\gamma_t \alpha_t^{-1}f(y^t)+\gamma_t \alpha_t^{-1}\langle\nabla f(y^t),x^{t+1}-y^t\rangle+\gamma_t\langle\nabla f(y^t),x-z^{t+1}\rangle\notag\\
\overset{\eqref{unc-updx}}{=}\ &\gamma_t \alpha_t^{-1}f(y^t)+\gamma_t \alpha_t^{-1}\langle\nabla f(y^t),(1-\alpha_t)x^t+\alpha_tz^{t+1}-y^t\rangle+\gamma_t\langle\nabla f(y^t),x-z^{t+1}\rangle\notag\\
=\ &\gamma_t\alpha_t^{-1}f(y^t)+\gamma_t(\alpha_t^{-1}-1)\langle\nabla f(y^t),x^t-y^t\rangle+\gamma_t\langle\nabla f(y^t),x-y^t\rangle\notag\\
=\ &\gamma_t(\alpha_t^{-1}-1)\left(f(y^t)+\langle\nabla f(y^t),x^t-y^t\rangle\right)+\gamma_t\left(f(y^t)+\langle\nabla f(y^t),x-y^t\rangle\right)\notag\\
\overset{\eqref{convex-1}}{\leq}\ &\gamma_t(\alpha_t^{-1}-1)\left(f(x^t)-\frac{1}{2}\mu\|x^t-y^t\|^2\right)+\gamma_t\left(f(x)-\frac{1}{2}\mu\|x-y^t\|^2\right) \notag\\
=\ &\gamma_t(\alpha_t^{-1}-1)f(x^t)+\gamma_tf(x)-\frac{1}{2}\mu\gamma_t(\alpha_t^{-1}-1)\|x^t-y^t\|^2-\frac{1}{2}\mu\gamma_t\|x-y^t\|^2. \label{l4-e2}
\end{align}
Using \eqref{unc-line}, \eqref{l4-e1} and \eqref{l4-e2}, we have
\begin{align}
\gamma_t \alpha_t^{-1}F(x^{t+1})\overset{\eqref{unc-line}}{\leq}\ &\gamma_t \alpha_t^{-1}f(y^t)+\gamma_t \alpha_t^{-1}\langle\nabla f(y^t),x^{t+1}-y^t\rangle+\frac{1}{2\alpha_t}\|x^{t+1}-y^t\|^2+\gamma_t \alpha_t^{-1} P(x^{t+1})\notag\\
\overset{\eqref{l4-e1}}{\leq}\ &\gamma_t \alpha_t^{-1}f(y^t)+\gamma_t \alpha_t^{-1}\langle\nabla f(y^t),x^{t+1}-y^t\rangle+\gamma_t\langle\nabla f(y^t),x-z^{t+1}\rangle
+\frac{1}{2\alpha_t}\|x^{t+1}-y^t\|^2\notag\\
&+\gamma_t(\alpha_t^{-1}-1) P(x^t)+\gamma_t P(x)+\frac{1}{2}\alpha_t(1-\beta_t)\|x-z^t\|^2-\frac{1}{2}\alpha_t\|x-z^{t+1}\|^2 \notag\\
&+\frac{1}{2}\alpha_t\beta_t\|x-y^t\|^2+R_t\notag\\
\overset{\eqref{l4-e2}}{\leq}\ &\gamma_t(\alpha_t^{-1}-1)F(x^t)+\gamma_tF(x)+\frac{1}{2}(\alpha_t\beta_t-\mu\gamma_t)\|x-y^t\|^2-\frac{1}{2}\mu\gamma_t(\alpha_t^{-1}-1)\|x^t-y^t\|^2\notag\\
&+\frac{1}{2\alpha_t}\|x^{t+1}-y^t\|^2+\frac{1}{2}\alpha_t(1-\beta_t)\|x-z^t\|^2-\frac{1}{2}\alpha_t\|x-z^{t+1}\|^2+R_t\notag\\
=\ &\gamma_t(\alpha_t^{-1}-1)F(x^t)+\gamma_tF(x)+\frac{1}{2}\alpha_t(1-\beta_t)\|x-z^t\|^2-\frac{1}{2}\alpha_t\|x-z^{t+1}\|^2, \label{F-ineq}
\end{align}
where the equality follows from \eqref{def-Rt} and $\beta_t=\mu\gamma_t\alpha_t^{-1}$. In addition, it follows from \eqref{unc-equa} and $\beta_t=\mu\gamma_t\alpha_t^{-1}$ that
\beq \label{alpha-eqn}
\gamma_{t-1}\alpha_t^2(1-\beta_t)=\gamma_{t-1}\alpha_t^2 - \gamma_{t-1}\alpha_t^2\beta_t 
=\gamma_{t-1}\alpha_t^2 - \mu\alpha_t\gamma_t\gamma_{t-1} \overset{\eqref{unc-equa}}{=} (1-\alpha_t)\alpha_{t-1}^2\gamma_t.
\eeq
In view of \eqref{F-ineq} and \eqref{alpha-eqn}, one has
\begin{align*}
F(x^{t+1})-F(x)+\frac{\alpha_t^2}{2\gamma_t}\|x-z^{t+1}\|^2\overset{\eqref{F-ineq}}{\leq}\ &(1-\alpha_t)\left(F(x^t)-F(x)\right)+\frac{\alpha_{t}^2(1-\beta_t)}{2\gamma_{t}}\|x-z^t\|^2\\
\overset{\eqref{alpha-eqn}}{=}\ &(1-\alpha_t)\left(F(x^t)-F(x)+\frac{\alpha_{t-1}^2}{2\gamma_{t-1}}\|x-z^t\|^2\right).
\end{align*}
The conclusion of this lemma immediately follows from the above inequality and $z^1=x^1$.
\end{proof}

Suppose that $x^t$ and $z^t$ are generated by Algorithm~\ref{alg-acc} for some $t \ge 1$. For any $0<\gamma\leq\gamma_0$, we define
\begin{align}
& y^t(\gamma)=\left((1-\alpha(\gamma))x^t+\alpha(\gamma)(1-\beta(\gamma))z^t\right)/\left(1-\alpha(\gamma) \beta(\gamma)\right), \label{y-gamma} \\
\label{unc-proxt}
&z^{t+1}(\gamma)=\argmin_{x}\left\{\gamma\langle\nabla f(y^t(\gamma)),x\rangle+\gamma  P(x)+\frac{\alpha(\gamma)}{2}\|x-\beta(\gamma) y^t(\gamma)-(1-\beta(\gamma))z^t\|^2\right\},\\
\label{unc-updxt}
&x^{t+1}(\gamma)=(1-\alpha(\gamma))x^t+\alpha(\gamma) z^{t+1}(\gamma),
\end{align}
where $\beta(\gamma)=\mu\gamma\alpha(\gamma)^{-1}$ and $\alpha(\gamma)\in(0,1]$ satisfies
\begin{equation} \label{alpha-lambda}
\gamma_{t-1}\alpha(\gamma)^2=(1-\alpha(\gamma))\alpha_{t-1}^2\gamma+\mu\gamma\gamma_{t-1}\alpha(\gamma).
\end{equation}

\begin{lemma}\label{l5}
Let $\cS$ and $\chS$ be defined in \eqref{def-S} and \eqref{def-hS}. Suppose that $x^t,z^t\in\cS$, and $y^t(\gamma)$ and $x^{t+1}(\gamma)$ are defined in \eqref{y-gamma} and \eqref{unc-updxt} for some $t \ge 1$. Then $y^t(\gamma)\in\cS$ and $x^{t+1}(\gamma)\in\chS$ for all $0<\gamma\leq\gamma_0$.
\end{lemma}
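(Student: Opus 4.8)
The plan is to show that $y^t(\gamma)$ lies in the small set $\cS$ by exhibiting it as a convex combination of $x^t$ and $z^t$, both of which are assumed to be in $\cS$, and then to bound $x^{t+1}(\gamma)$ by going from $\cS$ to $\chS$ using one proximal-gradient step. First I would recall the defining equation \eqref{y-gamma}, namely $y^t(\gamma) = \left((1-\alpha(\gamma))x^t + \alpha(\gamma)(1-\beta(\gamma))z^t\right)/(1-\alpha(\gamma)\beta(\gamma))$, and note that $\alpha(\gamma)\in(0,1]$ and $\beta(\gamma)=\mu\gamma\alpha(\gamma)^{-1}\in[0,1]$ (the latter by the same argument as Lemma~\ref{l3}(ii), which gives $\beta(\gamma)\le\sqrt{\mu\gamma}\le 1$ once \eqref{alpha-lambda} is shown to force $\alpha(\gamma)\ge\sqrt{\mu\gamma}$, exactly as in Lemma~\ref{l3}(i)). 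Then the coefficients $(1-\alpha(\gamma))/(1-\alpha(\gamma)\beta(\gamma))$ and $\alpha(\gamma)(1-\beta(\gamma))/(1-\alpha(\gamma)\beta(\gamma))$ are nonnegative and sum to $1$: indeed $(1-\alpha(\gamma)) + \alpha(\gamma)(1-\beta(\gamma)) = 1-\alpha(\gamma)\beta(\gamma)$. Hence $y^t(\gamma)$ is a genuine convex combination of $x^t,z^t\in\cS$; since $\cS$ is convex, $y^t(\gamma)\in\cS$, and in particular $\|y^t(\gamma)-x^*\|\le \sqrt{2\gamma_0}\,r_0/\alpha_0$.

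Next I would bound $x^{t+1}(\gamma)$. From \eqref{unc-updxt}, $x^{t+1}(\gamma)-x^* = (1-\alpha(\gamma))(x^t-x^*) + \alpha(\gamma)(z^{t+1}(\gamma)-x^*)$, so $\|x^{t+1}(\gamma)-x^*\|\le (1-\alpha(\gamma))\|x^t-x^*\| + \alpha(\gamma)\|z^{t+1}(\gamma)-x^*\|$; since $x^t\in\cS$, it suffices to control $\|z^{t+1}(\gamma)-x^*\|$. The idea is that $z^{t+1}(\gamma)$, defined by \eqref{unc-proxt}, is essentially a proximal-gradient-type step. I would argue that one can rewrite \eqref{unc-proxt} in the form $z^{t+1}(\gamma) = \prox_{(\gamma/\alpha(\gamma)) P}\!\left(w - (\gamma/\alpha(\gamma))\nabla f(y^t(\gamma))\right)$ where $w = \beta(\gamma)y^t(\gamma) + (1-\beta(\gamma))z^t$ is a convex combination of points in $\cS$, hence itself in $\cS$. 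Using nonexpansiveness of $\prox$, the fact that $x^* = \prox_{(\gamma/\alpha(\gamma))P}\!\left(x^* - (\gamma/\alpha(\gamma))\nabla f(x^*)\right)$ (optimality of $x^*$, via $-\nabla f(x^*)\in\partial P(x^*)$ when $P$ is smooth-compatible — more carefully one uses the variational inequality characterizing $x^*$), together with the triangle inequality and $L_\cS$-Lipschitz continuity of $\nabla f$ on $\cS$ (Lemma~\ref{F-Lipschitz}(i)), gives $\|z^{t+1}(\gamma)-x^*\| \le \|w-x^*\| + (\gamma/\alpha(\gamma)) L_\cS \|y^t(\gamma)-x^*\| \le (1+\gamma_0 L_\cS)\sqrt{2\gamma_0}\,r_0/\alpha_0$, where I used $\gamma/\alpha(\gamma)\le\gamma_0$ (since $\alpha(\gamma)\ge\gamma/\gamma_0$ can be extracted from \eqref{alpha-lambda}, or more directly $\alpha(\gamma)\ge\sqrt{\mu\gamma}$ handles the strongly convex case and a separate bound the $\mu=0$ case) and $\|w-x^*\|,\|y^t(\gamma)-x^*\|\le\sqrt{2\gamma_0}\,r_0/\alpha_0$. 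Plugging back, $\|x^{t+1}(\gamma)-x^*\| \le (1-\alpha(\gamma))\frac{\sqrt{2\gamma_0}r_0}{\alpha_0} + \alpha(\gamma)(1+\gamma_0 L_\cS)\frac{\sqrt{2\gamma_0}r_0}{\alpha_0} \le (1+\gamma_0 L_\cS)\frac{\sqrt{2\gamma_0}r_0}{\alpha_0}$, which is exactly the radius defining $\chS$ in \eqref{def-hS}; hence $x^{t+1}(\gamma)\in\chS$.

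The main obstacle I anticipate is getting the clean bound $\gamma/\alpha(\gamma)\le\gamma_0$, or whatever is actually needed, out of the implicit relation \eqref{alpha-lambda} uniformly in $\gamma\in(0,\gamma_0]$; the analysis in Lemma~\ref{l3} gave $\alpha_t^2\gamma_t^{-1}\le\alpha_{t-1}^2\gamma_{t-1}^{-1}$ by comparing $\alpha_t$ with $\alpha_{t-1}$, and the analogous monotonicity here would need $\alpha(\gamma)^2\gamma^{-1}\le\alpha_{t-1}^2\gamma_{t-1}^{-1}$, which should follow from the same computation applied to \eqref{alpha-lambda} (replacing $\gamma_t$ by $\gamma$ and $\alpha_t$ by $\alpha(\gamma)$), since the inductive hypothesis $\sqrt{\mu\gamma_{t-1}}\le\alpha_{t-1}$ is already available from Lemma~\ref{l3}. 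Once that monotonicity is in hand, together with $\alpha_0^2\gamma_0^{-1}\le\alpha_0^2\gamma_0^{-1}$ trivially and the induction from Theorem~\ref{inner}, the step-size ratio $\gamma/\alpha(\gamma)$ is controlled by $\gamma_0/\alpha_0^2 \cdot \alpha(\gamma) \le \gamma_0$-type inequalities; I would take care to use the correct normalization so that the final constant matches \eqref{def-hS} exactly (the $1+\gamma_0 L_\cS$ factor suggests the intended bound is precisely $\gamma/\alpha(\gamma)\le\gamma_0$ combined with $\|y^t(\gamma)-x^*\|\le\sqrt{2\gamma_0}r_0/\alpha_0$). The secondary technical point is justifying the $\prox$-reformulation of \eqref{unc-proxt} and the fixed-point characterization of $x^*$ when $P$ is a general proper closed convex function; this is standard but I would phrase it via the variational inequality \eqref{unc-l1}-style optimality condition rather than a literal $\prox$ identity to avoid subtleties with $\dom(P)$.
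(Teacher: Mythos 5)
Your overall strategy is the paper's: show $y^t(\gamma)$ and $w=\beta(\gamma)y^t(\gamma)+(1-\beta(\gamma))z^t$ are convex combinations of points of $\cS$ (via $\beta(\gamma)\in[0,1]$, argued exactly as you say from $\alpha(\gamma)\geq\sqrt{\mu\gamma}$ and Lemma~\ref{l3}), then compare $z^{t+1}(\gamma)$ with $x^*$ through the optimality conditions and the $L_\cS$-Lipschitz continuity of $\nabla f$ on $\cS$, and finish with the triangle inequality on \eqref{unc-updxt}. The first half of your argument is correct, and your prox-nonexpansiveness inequality $\|z^{t+1}(\gamma)-x^*\|\leq\|w-x^*\|+(\gamma/\alpha(\gamma))L_\cS\|y^t(\gamma)-x^*\|$ is equivalent to the paper's variational-inequality bound after dividing by $\alpha(\gamma)$.

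The genuine gap is the step $\gamma/\alpha(\gamma)\leq\gamma_0$, which you correctly identified as the obstacle but which is in fact \emph{false} in general, and neither of your proposed routes delivers it. Counterexample: take $\mu=0$, $t=1$, $\alpha_0=1$, $\gamma=\gamma_0$; then \eqref{alpha-lambda} reads $\alpha(\gamma)^2=1-\alpha(\gamma)$, so $\alpha(\gamma)=(\sqrt{5}-1)/2\approx 0.618$ and $\gamma/\alpha(\gamma)\approx 1.618\,\gamma_0>\gamma_0$. (Likewise $\alpha(\gamma)\geq\gamma/\gamma_0$ fails here, and in the strongly convex case $\gamma/\alpha(\gamma)\leq\sqrt{\gamma/\mu}$ is only bounded by $\gamma_0/\sqrt{\mu\gamma_0}\geq\gamma_0$.) Consequently your intermediate claim $\|z^{t+1}(\gamma)-x^*\|\leq(1+\gamma_0L_\cS)\sqrt{2\gamma_0}r_0/\alpha_0$ is not justified. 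The repair is exactly what the paper does: do not divide by $\alpha(\gamma)$. Keep the inequality in the form
\begin{equation*}
\alpha(\gamma)\|z^{t+1}(\gamma)-x^*\|\;\leq\;\alpha(\gamma)\|w-x^*\|+\gamma L_\cS\|y^t(\gamma)-x^*\|\;\leq\;\left(\alpha(\gamma)+\gamma L_\cS\right)\frac{\sqrt{2\gamma_0}\,r_0}{\alpha_0},
\end{equation*}
and substitute this product directly into $\|x^{t+1}(\gamma)-x^*\|\leq(1-\alpha(\gamma))\|x^t-x^*\|+\alpha(\gamma)\|z^{t+1}(\gamma)-x^*\|$; the $(\gamma/\alpha(\gamma))$ factor never appears, and you get $(1+\gamma L_\cS)\sqrt{2\gamma_0}r_0/\alpha_0\leq(1+\gamma_0L_\cS)\sqrt{2\gamma_0}r_0/\alpha_0$, i.e.\ $x^{t+1}(\gamma)\in\chS$. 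With that one-line rearrangement your proof coincides with the paper's.
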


\begin{proof}
Fix any $0<\gamma\leq\gamma_0$. By the optimality condition of problems \eqref{unc-prob} and \eqref{unc-proxt},  one has
\begin{eqnarray*}
&\langle\gamma\nabla f(y^t(\gamma))+\gamma  P'(z^{t+1}(\gamma))+\alpha(\gamma)(z^{t+1}(\gamma)-\beta(\gamma) y^t(\gamma)-(1-\beta(\gamma))z^t),x^*-z^{t+1}(\gamma)\rangle\geq0,\\
&\langle\gamma\nabla f(x^*)+\gamma  P'(x^*),z^{t+1}(\gamma)-x^*\rangle\geq0,
\end{eqnarray*}
where $ P'(z^{t+1}(\gamma))\in\partial  P(z^{t+1}(\gamma))$ and $ P'(x^*)\in\partial  P(x^*)$. Letting $w=\beta(\gamma) y^t(\gamma)+(1-\beta(\gamma))z^t$ and using the above two inequalities and the convexity of $ P$, we obtain  
\[
\langle\alpha(\gamma)(z^{t+1}(\gamma)-w)+\gamma(\nabla f(y^t(\gamma))-\nabla f(x^*)),x^*-z^{t+1}(\gamma)\rangle\geq\gamma\langle  P'(z^{t+1}(\gamma))- P'(x^*),z^{t+1}(\gamma)-x^*\rangle\geq0,
\]
which yields 
\begin{align}
\alpha(\gamma)\|z^{t+1}(\gamma)-x^*\|^2 &\leq\langle\alpha(\gamma)(x^*-w)+\gamma(\nabla f(y^t(\gamma))-\nabla f(x^*)),x^*-z^{t+1}(\gamma)\rangle \notag \\
& \leq \|\alpha(\gamma)(x^*-w)+\gamma(\nabla f(y^t(\gamma))-\nabla f(x^*))\| \|z^{t+1}(\gamma)-x^*\|. \label{dist-zxs}
\end{align}
In addition, recall from Lemma \ref{l3} that $\sqrt{\mu\gamma_{t-1}}\leq\alpha_{t-1}\leq1$. By this, $\alpha(\gamma)\in(0,1]$, \eqref{alpha-lambda}, and a similar argument as in the proof of Lemma \ref{l3}(ii), one can see that $\beta(\gamma)\in [0,1]$. It then follows from this, \eqref{y-gamma}, the expression of $w$, and $x^t,z^t\in\cS$ that $y^t(\gamma), w\in\cS$. By these, 
 $\alpha(\gamma)>0$, \eqref{def-S}, \eqref{dist-zxs}, and Lemma~\ref{F-Lipschitz}, one has
\begin{align*}
\alpha(\gamma)\|z^{t+1}(\gamma)-x^*\|\overset{\eqref{dist-zxs}}{\leq}\ &\|\alpha(\gamma)(w-x^*)+\gamma(\nabla f(y^t(\gamma))-\nabla f(x^*))\| \leq \alpha(\gamma) \|w-x^*\|+\gamma \|\nabla f(y^t(\gamma))-\nabla f(x^*)\|  \\
\leq\ &\alpha(\gamma) \|w-x^*\|+\gamma L_\cS\|y^t(\gamma)-x^*\|
\overset{\eqref{def-S}}{\leq} \left(\alpha(\gamma)+\gamma L_\cS\right)\frac{\sqrt{2\gamma_0}r_0}{\alpha_0}.
\end{align*}
Using this, \eqref{def-S}, \eqref{unc-updxt}, $\alpha(\gamma)\in(0,1]$, $x^t\in\cS$, and $\gamma\leq\gamma_0$, we obtain that
\begin{align*}
\|x^{t+1}(\gamma)-x^*\|\overset{\eqref{unc-updxt}}{\leq}\ &(1-\alpha(\gamma))\|x^t-x^*\|+\alpha(\gamma)\|z^{t+1}(\gamma)-x^*\|\\
\overset{\eqref{def-S}}{\leq}\ &(1-\alpha(\gamma))\frac{\sqrt{2\gamma_0}r_0}{\alpha_0}+(\alpha(\gamma)+\gamma L_\cS)\frac{\sqrt{2\gamma_0}r_0}{\alpha_0}\\
\leq\ &(1+\gamma_0L_\cS)\frac{\sqrt{2\gamma_0}r_0}{\alpha_0}.
\end{align*}
It then follows from the last relation and \eqref{def-hS} that $x^{t+1}(\gamma)\in\chS$.
\end{proof} 

For the convenience of our subsequent analysis, we define
\begin{equation}\label{def-lambda}
\lambda_0=1, \quad \lambda_t=\prod_{i=1}^{t}(1-\alpha_i).
\end{equation}

\begin{lemma}\label{l6}
Let $\cS$ and $N$ be defined in \eqref{def-S} and \eqref{def-N}. Suppose that $x^t,z^t\in\cS$ for some $t \ge 1$. Then $x^{t+1}$, $y^t$ and $z^{t+1}$ are successfully generated by Algorithm~\ref{alg-acc} at iteration $t$ with $n_t\leq N$, and moreover, $x^{t+1}, y^t, z^{t+1} \in \cS$.
\end{lemma}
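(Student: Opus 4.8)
The plan is to bootstrap from the hypothesis $x^t,z^t\in\cS$ using three facts already available: Lemma~\ref{l5}, which places $y^t(\gamma)$ in $\cS$ and $x^{t+1}(\gamma)$ in $\chS$ for every candidate stepsize $0<\gamma\le\gamma_0$; Lemma~\ref{F-Lipschitz}(ii), the $L_\chS$-Lipschitz continuity of $\nabla f$ on $\chS$; and the energy inequality \eqref{unc-l4} of Lemma~\ref{l4}. I would first dispose of the backtracking line search. Every stepsize tested at iteration~$t$ has the form $\gamma=\gamma_0\delta^{n}$ with $n\ge0$, hence $0<\gamma\le\gamma_0$, so Lemma~\ref{l5} gives $y^t(\gamma)\in\cS\subseteq\chS$ and $x^{t+1}(\gamma)\in\chS$. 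Since $\chS$ is convex (it is $\dom(P)$ intersected with a ball), the segment joining these two points lies in $\chS$, where $\nabla f$ is $L_\chS$-Lipschitz, so the standard descent estimate yields
\[
f(x^{t+1}(\gamma))-f(y^t(\gamma))-\langle\nabla f(y^t(\gamma)),x^{t+1}(\gamma)-y^t(\gamma)\rangle\le\tfrac{1}{2}L_\chS\,\|x^{t+1}(\gamma)-y^t(\gamma)\|^2 .
\]
Consequently test \eqref{unc-line} is satisfied whenever $\gamma L_\chS\le1$, i.e. whenever $\gamma_0\delta^{n}L_\chS\le1$, which holds for all $n\ge\log(\gamma_0L_\chS)/\log(1/\delta)$; hence the smallest admissible exponent is at most $N=\lceil\log(\gamma_0L_\chS)/\log(1/\delta)\rceil_+$. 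This proves $n_t\le N$ and that $\gamma_t,\alpha_t,\beta_t,y^t,z^{t+1},x^{t+1}$ are all successfully produced at iteration~$t$.

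Once iteration~$t$ is known to complete, I would invoke Lemma~\ref{l4} with $x=x^*$: using $F(x^{t+1})\ge F(x^*)$ (optimality of $x^*$) on the left and the definition \eqref{def-S} of $r_0$ on the right, \eqref{unc-l4} becomes $\|x^*-z^{t+1}\|^2\le\tfrac{2\gamma_t}{\alpha_t^2}\prod_{i=1}^t(1-\alpha_i)\,r_0^2$. Separately, dropping the nonnegative term $\mu\alpha_i\gamma_i\gamma_{i-1}$ from \eqref{unc-equa} gives $\alpha_i^2\gamma_i^{-1}\ge(1-\alpha_i)\alpha_{i-1}^2\gamma_{i-1}^{-1}$ for every $i\ge1$ (recall $\alpha_i\in(0,1]$ by Lemma~\ref{l3}), and iterating this from $i=1$ to $i=t$ yields $\tfrac{\gamma_t}{\alpha_t^2}\prod_{i=1}^t(1-\alpha_i)\le\tfrac{\gamma_0}{\alpha_0^2}$. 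Combining the two estimates gives $\|x^*-z^{t+1}\|^2\le\tfrac{2\gamma_0}{\alpha_0^2}r_0^2$, and since $z^{t+1}\in\dom(P)$ (being the minimizer in \eqref{unc-prox}), this means $z^{t+1}\in\cS$. Then $y^t\in\cS$ is the case $\gamma=\gamma_t$ of Lemma~\ref{l5}, and $x^{t+1}=(1-\alpha_t)x^t+\alpha_tz^{t+1}$ lies in $\cS$ as a convex combination of $x^t,z^{t+1}\in\cS$ (using that $\cS$ is convex). This completes the argument.

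The main obstacle will be keeping the two nested sets $\cS\subseteq\chS$ straight in the first step: Lemma~\ref{l5} only guarantees $x^{t+1}(\gamma)\in\chS$ at that stage, so the descent inequality driving the line search must be run on $\chS$ with the constant $L_\chS$ (not on $\cS$ with $L_\cS$), and only afterwards — through the sharper bound \eqref{unc-l4} together with the one-line monotonicity read off from \eqref{unc-equa} — can $z^{t+1}$, and hence $x^{t+1}$, be pulled back into the smaller set $\cS$. One must also check that the elementary $\delta$-counting produces exactly the constant $N$ in \eqref{def-N}; everything else is routine convexity and the optimality of $x^*$.
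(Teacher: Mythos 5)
Your proposal is correct and follows essentially the same route as the paper's proof: establish $n_t\le N$ by running the descent estimate on $\chS$ with $L_\chS$ via Lemma~\ref{l5} and Lemma~\ref{F-Lipschitz}(ii), then pull $z^{t+1}$ back into $\cS$ by combining \eqref{unc-l4} at $x=x^*$ with the telescoped inequality $\gamma_t\alpha_t^{-2}\prod_{i=1}^t(1-\alpha_i)\le\gamma_0\alpha_0^{-2}$ read off from \eqref{unc-equa}, and conclude $y^t\in\cS$ from Lemma~\ref{l5} and $x^{t+1}\in\cS$ by convexity. The only difference is cosmetic: the paper packages the product $\prod_{i=1}^t(1-\alpha_i)$ as $\lambda_t$ and checks the line search only at the single exponent $N$, whereas you verify it for all $n$ beyond the threshold.
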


\begin{proof}
Let $\gamma=\gamma_0\delta^{N}$ and $y^t(\gamma)$ and $x^{t+1}(\gamma)$ be defined in \eqref{y-gamma} and \eqref{unc-updxt}. By $\delta\in(0,1)$ and \eqref{def-N}, one can observe that $0<\gamma\leq\gamma_0$ and $\gamma\leq L_\chS^{-1}$.  Using these, $x^t,z^t\in\cS$, and Lemma~\ref{l5}, we see that $x^{t+1}(\gamma)\in\chS$ and $y^t(\gamma)\in\cS\subseteq\chS$, where $\chS$ is defined in \eqref{def-hS}. It then follows from $\gamma\leq L_\chS^{-1}$ and Lemma \ref{F-Lipschitz}(ii) that 
\begin{equation*}
2\gamma \left(f(x^{t+1}(\gamma))-f(y^t(\gamma))-\langle\nabla f(y^t(\gamma)),x^{t+1}(\gamma)-y^t(\gamma)\rangle\right)\leq\gamma L_\chS\|x^{t+1}(\gamma)-y^t(\gamma)\|^2\leq\|x^{t+1}(\gamma)-y^t(\gamma)\|^2.
\end{equation*}
This together with the definition of $n_t$ in Algorithm~\ref{alg-acc} implies that $n_t \le N$. It then follows that $x^{t+1}$, $y^t$ and $z^{t+1}$ are successfully generated by Algorithm~\ref{alg-acc}.

Since $x^t,z^t\in\cS$ and $y^t=y^t(\gamma_t)$ for some $0<\gamma_t\leq\gamma_0$, it follows from Lemma~\ref{l5} that $y^t\in\cS$. We next show that $x^{t+1}, z^{t+1} \in \cS$.
Indeed, by \eqref{unc-equa} and \eqref{def-lambda}, one has 
\begin{equation*}
\lambda_t\overset{\eqref{def-lambda}}{=}(1-\alpha_t)\lambda_{t-1}\overset{\eqref{unc-equa}}{=}\frac{\gamma_{t-1}\alpha_t^2-\mu\alpha_t\gamma_t\gamma_{t-1}}{\alpha_{t-1}^2\gamma_t}\lambda_{t-1}\leq\frac{\gamma_{t-1}\alpha_t^2}{\alpha_{t-1}^2\gamma_t}\lambda_{t-1},
\end{equation*}
which along with $\lambda_0=1$ implies that $\gamma_t\lambda_t/\alpha_t^2 \le  \gamma_0/\alpha_0^2$. 
Using this, \eqref{unc-l4} and \eqref{def-lambda}, we obtain that
\begin{align*}
\|z^{t+1}-x^*\|^2\leq\ &\frac{2\gamma_t}{\alpha_t^2}\left(F(x^{t+1})-F(x^*)+\frac{\alpha_t^2}{2\gamma_t}\|z^{t+1}-x^*\|^2\right)\\
\leq\ &\frac{2\gamma_t\lambda_t}{\alpha_t^2}\left(F(x^1)-F(x^*)+\frac{\alpha^2_{0}}{2\gamma_{0}}\|z^1-x^*\|^2\right)\\
\leq\ &\frac{2\gamma_0}{\alpha_0^2}\left(F(x^1)-F(x^*)+\frac{\alpha^2_{0}}{2\gamma_{0}}\|z^1-x^*\|^2\right),
\end{align*}
which together with \eqref{def-S} implies that $z^{t+1}\in\cS$. It then follows from this and  \eqref{unc-updx} that $x^{t+1}\in\cS$.
\end{proof}

We are now ready to prove Theorems \ref{inner} and \ref{t0}. 

\begin{proof}[\textbf{Proof of Theorem \ref{inner}}]
We prove this theorem by induction. Indeed, notice from Algorithm~\ref{alg-acc} that $z^1=x^1\in\cS$. It then follows from Lemma~\ref{l6} that $x^2$, $y^1$ and $z^2$ are successfully generated with $n_1\leq N$ and $x^2, y^1, z^2 \in \cS$. Now, suppose that $x^t$, $y^{t-1}$ and $z^t$ are already generated with $n_{t-1}\leq N$ and $x^t, y^{t-1}, z^t \in \cS$. It then follows from Lemma~\ref{l6} that $x^{t+1}$, $y^t$ and $z^{t+1}$ are successfully generated with $n_t\leq N$ and $x^{t+1}, y^t, z^{t+1} \in \cS$. Hence, the induction is complete and the conclusion of this theorem holds.
\end{proof}

\begin{proof}[\textbf{Proof of Theorem \ref{t0}}]
Observe from \eqref{def-lambda} that $\lambda_i = (1-\alpha_i) \lambda_{i-1} <\lambda_{i-1}$ for all $i \ge 1$. In addition, recall from the proof of Lemma \ref{l6} that $\gamma_i\lambda_i/\alpha_i^2 \le  \gamma_0/\alpha_0^2$ for all $i \ge 1$. By these relations, one has
\begin{equation*}
\frac{1}{\sqrt{\lambda_{i}}}-\frac{1}{\sqrt{\lambda_{i-1}}}=\frac{\lambda_{i-1}-\lambda_i}{\sqrt{\lambda_{i-1}\lambda_i}(\sqrt{\lambda_{i-1}}+\sqrt{\lambda_i})}
\geq\frac{\lambda_{i-1}-\lambda_i}{2\lambda_{i-1}\sqrt{\lambda_i}}=\frac{\alpha_i}{2\sqrt{\lambda_i}}\geq\frac{1}{2}\alpha_0\sqrt{\gamma_i/\gamma_0} \qquad \forall i \ge 1.
\end{equation*}
Summing up the above inequalities for $i=1,2,\dots,t$ and using $\lambda_0=1$,  we obtain
\begin{equation}\label{p1-e2}
\frac{1}{\sqrt{\lambda_t}}-1\geq\frac{1}{2}\alpha_0\sum_{i=1}^t\sqrt{\gamma_i/\gamma_0}\quad\Rightarrow\quad \lambda_t\leq4\left(2+\alpha_0\sum_{i=1}^t\sqrt{\gamma_i/\gamma_0}\right)^{-2}.
\end{equation}
Also, observe from \eqref{def-lambda} and Lemma~\ref{l3}(i) that 
\begin{equation}\label{p1-e3}
\lambda_t=\prod_{i=1}^{t}(1-\alpha_i)\leq\prod_{i=1}^{t}\left(1-\sqrt{\mu\gamma_i}\right).
\end{equation}
In addition, recall from Theorem \ref{inner} that $n_i \le N$, which together with \eqref{def-N} implies that $\gamma_i=\gamma_0\delta^{n_i}\geq\min\{\gamma_0,\delta/L_\chS\}$ for all $i\geq 1$. By this, \eqref{p1-e2} and \eqref{p1-e3}, one has 
\begin{equation*}
\lambda_t\leq\min\left\{\left(1-\sqrt{\mu\min\left\{\gamma_0,\delta L_\chS^{-1}\right\}}\ \right)^t,\;4\left(2+t\alpha_0\sqrt{\min\left\{1,\delta \gamma_0^{-1}L_\chS^{-1}\right\}}\  \right)^{-2}\right\} 
\qquad \forall t \ge 1.
\end{equation*}
The conclusion of Theorem \ref{t0} then directly follows from this relation, \eqref{def-lambda} and \eqref{unc-l4} with $x=x^*$.  
\end{proof}

\subsection{Proof of the main results in Subsection \ref{alg-unconstr2}}\label{sec:proof2}

In this subsection we first establish two technical lemmas and then use them to prove Theorem~\ref{res-complexity}.

\begin{lemma}\label{l8}
Let $\gamma_0$, $\delta$ be given in Algorithm \ref{alg-acc-term} and $N$ be defined in \eqref{def-N}. Suppose that $(v, \gamma_0, \delta)$ is the input for Algorithm~\ref{alg-term} for any $v\in\cS$. Then $(\tv,\tg)$ is successfully generated by Algorithm~\ref{alg-term} with $\tn\leq N$, $\tv\in\chS$ 
and $\tg\geq\min\{\gamma_0,\delta/L_\chS\}$.
\end{lemma}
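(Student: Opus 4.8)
The plan is to follow the structure of Lemmas~\ref{l5} and \ref{l6}, specialized to the single \emph{unaccelerated} proximal step performed in Algorithm~\ref{alg-term}. For $0<\gamma\le\gamma_0$, introduce
\[
\tv(\gamma)=\argmin_x\left\{\gamma\langle\nabla f(v),x\rangle+\gamma P(x)+\tfrac12\|x-v\|^2\right\},
\]
so that the point $\tv$ produced by Algorithm~\ref{alg-term} on input $(v,\gamma_0,\delta)$ equals $\tv(\tg)$ with $\tg=\gamma_0\delta^{\tn}$.

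First I would show $\tv(\gamma)\in\chS$ for every $0<\gamma\le\gamma_0$, given $v\in\cS$. Writing the optimality condition for $\tv(\gamma)$ together with the optimality condition $0\in\nabla f(x^*)+\partial P(x^*)$ for~\eqref{unc-prob}, adding them, and using monotonicity of $\partial P$, one obtains $\|\tv(\gamma)-x^*\|^2\le\langle(x^*-v)+\gamma(\nabla f(v)-\nabla f(x^*)),\,x^*-\tv(\gamma)\rangle$, hence $\|\tv(\gamma)-x^*\|\le\|x^*-v\|+\gamma\|\nabla f(v)-\nabla f(x^*)\|$. Since $v,x^*\in\cS$, Lemma~\ref{F-Lipschitz}(i) gives $\|\nabla f(v)-\nabla f(x^*)\|\le L_\cS\|v-x^*\|$, so $\|\tv(\gamma)-x^*\|\le(1+\gamma L_\cS)\|v-x^*\|\le(1+\gamma_0L_\cS)\sqrt{2\gamma_0}\,r_0/\alpha_0$ by $\gamma\le\gamma_0$ and $v\in\cS$, which is exactly the bound defining $\chS$ in~\eqref{def-hS}. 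This is essentially Lemma~\ref{l5} with $\alpha(\gamma)=1$, $\beta(\gamma)=0$, and is the only genuinely computational step; I expect it to be the main (though modest) obstacle, and it is strictly simpler than Lemma~\ref{l5}.

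Next I would bound $\tn$. By the definition of $N$ in~\eqref{def-N}, the value $\gamma:=\gamma_0\delta^N$ satisfies $\gamma\le 1/L_\chS$ and $\gamma\le\gamma_0$. By the previous step $\tv(\gamma)\in\chS$, and $v\in\cS\subseteq\chS$; since $\chS$ is convex and $\nabla f$ is $L_\chS$-Lipschitz on it (Lemma~\ref{F-Lipschitz}(ii)), the descent lemma yields $f(\tv(\gamma))-f(v)-\langle\nabla f(v),\tv(\gamma)-v\rangle\le\tfrac{L_\chS}{2}\|\tv(\gamma)-v\|^2$, whence $2\gamma(f(\tv(\gamma))-f(v)-\langle\nabla f(v),\tv(\gamma)-v\rangle)\le\gamma L_\chS\|\tv(\gamma)-v\|^2\le\|\tv(\gamma)-v\|^2$. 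Thus the line-search inequality~\eqref{unc-check1} holds at exponent $N$, so the smallest admissible exponent $\tn$ satisfies $\tn\le N$, and $(\tv,\tg)$ is successfully generated.

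Finally, since $\tn\le N$ and $\delta\in(0,1)$, one has $\tg=\gamma_0\delta^{\tn}\ge\gamma_0\delta^N$; a short case analysis on whether $N=0$ (then $\gamma_0\le 1/L_\chS$ and $\gamma_0\delta^N=\gamma_0$) or $N\ge1$ (then, by minimality of the ceiling, $\gamma_0\delta^{N-1}>1/L_\chS$, i.e.\ $\gamma_0\delta^N>\delta/L_\chS$) gives $\gamma_0\delta^N\ge\min\{\gamma_0,\delta/L_\chS\}$, hence $\tg\ge\min\{\gamma_0,\delta/L_\chS\}$ --- the same implication already used in the proof of Theorem~\ref{t0}. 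And $\tv=\tv(\tg)$ with $0<\tg\le\gamma_0$, so $\tv\in\chS$ by the first step, completing the argument.
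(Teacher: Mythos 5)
Your proposal is correct and follows essentially the same route as the paper's proof: define $\tv(\gamma)$ for $0<\gamma\le\gamma_0$, use the two optimality conditions plus monotonicity of $\partial P$ and Lemma~\ref{F-Lipschitz}(i) to show $\tv(\gamma)\in\chS$, then take $\gamma=\gamma_0\delta^N$ and apply the descent inequality on $\chS$ via Lemma~\ref{F-Lipschitz}(ii) to conclude $\tn\le N$ and the lower bound on $\tg$. The only (harmless) difference is that you spell out the nonexpansiveness estimate and the case analysis on $N$ in slightly more detail than the paper, which simply cites the analogous argument for \eqref{dist-zxs}.
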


\begin{proof}
For any $0<\gamma\leq\gamma_0$, let
\begin{equation}\label{unc-xplust}
\tv(\gamma)=\argmin_{x}\left\{\gamma\langle\nabla f(v),x-v\rangle+\gamma  P(x)+\frac{1}{2}\|x-v\|^2\right\}.
\end{equation}
By the optimality condition of \eqref{unc-prob} and \eqref{unc-xplust} and a similar argument as for \eqref{dist-zxs}, one has
\[
\|\tv(\gamma)-x^*\|\leq \|v-x^* -\gamma(\nabla f(v)-\nabla f(x^*))\|. 
\]
Using this, $v\in\cS$, \eqref{def-S}, and Lemma \ref{F-Lipschitz}(i), we obtain
\begin{align*}
\|\tv(\gamma)-x^*\| \leq \|v-x^*\|+\gamma L_\cS\|v-x^*\| \leq \left(1+\gamma_0 L_\cS\right)\frac{\sqrt{2\gamma_0}r_0}{\alpha_0} \qquad \forall 0<\gamma\leq\gamma_0.
\end{align*}
This along with the definition of $\chS$ in \eqref{def-hS} implies that $\tv(\gamma)\in\chS$ for all $0<\gamma\leq\gamma_0$. Now, let $\gamma=\gamma_0\delta^{N}$. By $\delta\in(0,1)$ and \eqref{def-N}, one can observe that $0<\gamma\leq\gamma_0$ and $\gamma\leq L_\chS^{-1}$. It then follows that $\tv(\gamma)\in\chS$. By these, $v\in\cS\subseteq\chS$ and Lemma \ref{F-Lipschitz}(ii), one has 
\begin{equation*}
2\gamma(f(\tv(\gamma))-f(v)-\langle\nabla f(v),\tv(\gamma)-v\rangle)\leq\gamma L_\chS\|\tv(\gamma)-v\|^2\leq\|\tv(\gamma)-v\|^2.
\end{equation*}
These together with  \eqref{def-N} and the definition of $\tn$ in Algorithm~\ref{alg-term} implies that $(\tv,\tg)$ is successfully generated by Algorithm~\ref{alg-term} with $\tn \le N$, and moreover, 
\[
\gamma_0 \geq \tg= \gamma_0\delta^{\tn}\geq \gamma_0\delta^{N}\geq\min\{\gamma_0,\delta/L_\chS\}, \quad \tv=\tv(\tg)\in\chS.
\] 
\end{proof}

\begin{lemma} \label{lem:res-term}
Suppose that $x^{t+1}$ and $(\tx^{t+1},\tg_{t+1})$ are generated in Algorithm~\ref{alg-acc-term} for some $t \ge 1$. Then we have
\begin{align}
\dist(0,\partial F(\tx^{t+1})) & \leq\|\tg_{t+1}^{-1}(x^{t+1}-\tx^{t+1})+\nabla f(\tx^{t+1})-\nabla f(x^{t+1})\|  \notag \\
& \leq \left(\sqrt{2\max\{\gamma_0^{-1},L_\chS\delta^{-1}\}}+\sqrt{2\gamma_0}L_\chS\right)\sqrt{F(x^{t+1})-F(x^*)}, \label{residual-ineq}
\end{align}
where $L_\chS$ is given in Lemma \ref{F-Lipschitz}, and $\gamma_0$ and $\delta$ are the input parameters of Algorithm~\ref{alg-acc}.
\end{lemma}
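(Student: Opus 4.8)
The plan is to establish the two inequalities in \eqref{residual-ineq} separately. For the first inequality, I would start from the optimality condition of the subproblem \eqref{unc-xplus} that defines $\tx^{t+1}$ when Algorithm~\ref{alg-term} is called with input $(x^{t+1},\gamma_0,\delta)$. That optimality condition reads
\[
0\in\tg_{t+1}\nabla f(x^{t+1})+\tg_{t+1}\partial P(\tx^{t+1})+(\tx^{t+1}-x^{t+1}),
\]
so that $\tg_{t+1}^{-1}(x^{t+1}-\tx^{t+1})-\nabla f(x^{t+1})\in\partial P(\tx^{t+1})$. Adding $\nabla f(\tx^{t+1})$ to both sides shows that $\tg_{t+1}^{-1}(x^{t+1}-\tx^{t+1})+\nabla f(\tx^{t+1})-\nabla f(x^{t+1})\in\nabla f(\tx^{t+1})+\partial P(\tx^{t+1})=\partial F(\tx^{t+1})$, since $f$ is differentiable on $\cl(\dom(P))$. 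Taking the norm of this particular element of $\partial F(\tx^{t+1})$ and bounding $\dist(0,\partial F(\tx^{t+1}))$ by it yields the first inequality.

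For the second inequality, I would bound the norm $\|\tg_{t+1}^{-1}(x^{t+1}-\tx^{t+1})+\nabla f(\tx^{t+1})-\nabla f(x^{t+1})\|$ by the triangle inequality into $\tg_{t+1}^{-1}\|x^{t+1}-\tx^{t+1}\|+\|\nabla f(\tx^{t+1})-\nabla f(x^{t+1})\|$. By Theorem~\ref{inner}, $x^{t+1}\in\cS$, so Lemma~\ref{l8} applies: $(\tx^{t+1},\tg_{t+1})$ is successfully generated with $\tx^{t+1}\in\chS$ and $\tg_{t+1}\geq\min\{\gamma_0,\delta/L_\chS\}$, hence $\tg_{t+1}^{-1}\leq\max\{\gamma_0^{-1},L_\chS\delta^{-1}\}$. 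Since both $x^{t+1}$ and $\tx^{t+1}$ lie in $\chS$, the Lipschitz bound from Lemma~\ref{F-Lipschitz}(ii) gives $\|\nabla f(\tx^{t+1})-\nabla f(x^{t+1})\|\leq L_\chS\|\tx^{t+1}-x^{t+1}\|$. Combining, the whole expression is at most $(\max\{\gamma_0^{-1},L_\chS\delta^{-1}\}+L_\chS)\|x^{t+1}-\tx^{t+1}\|$.

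It remains to control $\|x^{t+1}-\tx^{t+1}\|$ by $\sqrt{F(x^{t+1})-F(x^*)}$. The key is the line-search condition \eqref{unc-check1} satisfied inside Algorithm~\ref{alg-term}, which together with convexity of $f$ gives a descent-type estimate: the standard proximal-gradient argument (using \eqref{unc-check1}, convexity of $f$ on $\dom(P)$, and convexity of $P$ via the subgradient inequality at $\tx^{t+1}$) yields $F(\tx^{t+1})\leq F(x^{t+1})-\tfrac{1}{2\tg_{t+1}}\|x^{t+1}-\tx^{t+1}\|^2$, or at the very least $\tfrac{1}{2\tg_{t+1}}\|x^{t+1}-\tx^{t+1}\|^2\leq F(x^{t+1})-F(\tx^{t+1})\leq F(x^{t+1})-F(x^*)$. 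Hence $\|x^{t+1}-\tx^{t+1}\|\leq\sqrt{2\tg_{t+1}}\sqrt{F(x^{t+1})-F(x^*)}\leq\sqrt{2\gamma_0}\sqrt{F(x^{t+1})-F(x^*)}$, using $\tg_{t+1}\leq\gamma_0$ from Lemma~\ref{l8}. Plugging this into the bound from the previous paragraph, distributing the factors, and writing $\sqrt{2\max\{\gamma_0^{-1},L_\chS\delta^{-1}\}}=\sqrt{2}\sqrt{\max\{\gamma_0^{-1},L_\chS\delta^{-1}\}}$ cleanly gives the stated constant $\sqrt{2\max\{\gamma_0^{-1},L_\chS\delta^{-1}\}}+\sqrt{2\gamma_0}L_\chS$.

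The main obstacle I anticipate is getting the descent inequality $\tfrac{1}{2\tg_{t+1}}\|x^{t+1}-\tx^{t+1}\|^2\leq F(x^{t+1})-F(x^*)$ with the right constant — one must be careful to invoke \eqref{unc-check1} (which is a sufficient-decrease condition phrased exactly to make the standard proximal-gradient lemma go through), use convexity of $f$ at the point $x^{t+1}\in\dom(P)$ to compare $f(\tx^{t+1})$ with $f(x^*)$, and combine with the subgradient inequality for $P$ coming from the optimality condition of \eqref{unc-xplus}. Everything else is bookkeeping: tracking which enlarged set ($\cS$ versus $\chS$) each iterate belongs to and which Lipschitz constant applies, all of which is already supplied by Theorem~\ref{inner}, Lemma~\ref{l8}, and Lemma~\ref{F-Lipschitz}.
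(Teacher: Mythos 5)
Your proposal follows essentially the same route as the paper's proof: the subgradient membership $\tg_{t+1}^{-1}(x^{t+1}-\tx^{t+1})+\nabla f(\tx^{t+1})-\nabla f(x^{t+1})\in\partial F(\tx^{t+1})$ from the optimality condition of \eqref{unc-xplus}, the bound $\dist(0,\partial F(\tx^{t+1}))\le(\tg_{t+1}^{-1}+L_{\chS})\|\tx^{t+1}-x^{t+1}\|$ via Lemma~\ref{l8} and Lemma~\ref{F-Lipschitz}(ii), and the descent estimate $\tfrac{1}{2}\|\tx^{t+1}-x^{t+1}\|^2\le\tg_{t+1}(F(x^{t+1})-F(\tx^{t+1}))\le\tg_{t+1}(F(x^{t+1})-F(x^*))$ obtained from \eqref{unc-check1} together with the optimality of $\tx^{t+1}$ in the prox subproblem. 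All of these steps are sound and match the paper's.

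The one place where your bookkeeping does not deliver the stated constant is the final combination. You first replace $\tg_{t+1}^{-1}$ by $\max\{\gamma_0^{-1},L_\chS\delta^{-1}\}$ and only afterwards multiply by $\|x^{t+1}-\tx^{t+1}\|\le\sqrt{2\gamma_0}\sqrt{F(x^{t+1})-F(x^*)}$; this produces the coefficient $\sqrt{2\gamma_0}\,\max\{\gamma_0^{-1},L_\chS\delta^{-1}\}$ on the first term, which equals $\sqrt{2\max\{\gamma_0^{-1},L_\chS\delta^{-1}\}}$ only when the maximum is attained at $\gamma_0^{-1}$; when $\gamma_0>\delta/L_\chS$ it is strictly larger, so "distributing the factors" as you describe does not give the lemma's constant. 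The fix, which is what the paper does, is to keep $\tg_{t+1}$ symbolic until the end: write $(\tg_{t+1}^{-1}+L_\chS)\sqrt{2\tg_{t+1}}=\sqrt{2\tg_{t+1}^{-1}}+\sqrt{2\tg_{t+1}}\,L_\chS$, then invoke the lower bound $\tg_{t+1}\ge\min\{\gamma_0,\delta/L_\chS\}$ for the first summand and the upper bound $\tg_{t+1}\le\gamma_0$ for the second. With that reordering your argument yields exactly \eqref{residual-ineq}.
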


\begin{proof}
Notice that $(\tx^{t+1},\tg_{t+1})$ is the output of Algorithm \ref{alg-term} with $(x^{t+1},\gamma_0,\delta)$ as the input. By Lemma~\ref{l8}, one has that $\tx^{t+1}\in\chS$ and $\gamma_0\geq\tg_{t+1}\geq\min\{\gamma_0,\delta/L_\chS\}$. Also, it follows from \eqref{unc-xplus} and \eqref{unc-check1} with 
$v=x^{t+1}$, $\tv=\tx^{t+1}$ and $\tg=\tg_{t+1}$ that
\begin{align}
&\tx^{t+1}=\argmin_{x}\left\{\tg_{t+1}\langle\nabla f(x^{t+1}),x\rangle+\tg_{t+1} P(x)+\frac{1}{2}\|x-x^{t+1}\|^2\right\},  \label{subprob-txt} \\
& 2\tg_{t+1}(f(\tx^{t+1})-f(x^{t+1})-\langle\nabla f(x^{t+1}),\tx^{t+1}-x^{t+1}\rangle)\leq\|\tx^{t+1}-x^{t+1}\|^2. \label{subprob-term-line}
\end{align}
By the optimality condition of \eqref{subprob-txt}, it can be easily shown that
\begin{align}
 & \tg_{t+1}^{-1}(x^{t+1}-\tx^{t+1})+\nabla f(\tx^{t+1})- \nabla f(x^{t+1}) \in \partial F(\tx^{t+1}), \label{F-residual}\\ 
& \tg_{t+1}\langle\nabla f(x^{t+1}),\tx^{t+1}\rangle+\tg_{t+1} P(\tx^{t+1}) \le \tg_{t+1}\langle\nabla f(x^{t+1}),x^{t+1}\rangle+\tg_{t+1} P(x^{t+1}) - \|\tx^{t+1}-x^{t+1}\|^2.  \label{Psi-tx}
\end{align}
By \eqref{subprob-term-line} and \eqref{Psi-tx}, one has
\begin{align*}
 \tg_{t+1}F(\tx^{t+1})\overset{\eqref{subprob-term-line}}{\leq}& \tg_{t+1} P(\tx^{t+1})+ \tg_{t+1} f(x^{t+1})+ \tg_{t+1}\langle\nabla f(x^{t+1}),\tx^{t+1}-x^{t+1}\rangle+\frac{1}{2}\|\tx^{t+1}-x^{t+1}\|^2\\
\overset{\eqref{Psi-tx}}{\leq}& \tg_{t+1}F(x^{t+1})-\frac{1}{2}\|\tx^{t+1}-x^{t+1}\|^2,
\end{align*}
which yields $\|\tx^{t+1}-x^{t+1}\|\le\sqrt{2\tg_{t+1}(F(x^{t+1})-F(\tx^{t+1}))}$.
This together with \eqref{F-residual}, $\tx^{t+1}\in\chS$, $\gamma_0\geq\tg_{t+1}\geq\min\{\gamma_0,\delta/L_\chS\}$, and Lemma \ref{F-Lipschitz}(ii) implies
\begin{align*}
\dist(0,\partial F(\tx^{t+1}))& \leq\|\tg_{t+1}^{-1}(x^{t+1}-\tx^{t+1})+\nabla f(\tx^{t+1})-\nabla f(x^{t+1})\|\leq (\tg_{t+1}^{-1}+L_\chS)\|\tx^{t+1}-x^{t+1}\| \\
& \leq\left(\sqrt{2\tg_{t+1}^{-1}}+\sqrt{2\tg_{t+1}}L_\chS\right)\sqrt{F(x^{t+1})-F(\tx^{t+1})} \\
& \leq\left(\sqrt{2\max\{\gamma_0^{-1},L_\chS\delta^{-1}\}}+\sqrt{2\gamma_0}L_\chS\right)\sqrt{F(x^{t+1})-F(x^*)}.
\end{align*}
\end{proof}

We are now ready to prove Theorem~\ref{res-complexity}.

\begin{proof}[\textbf{Proof of Theorem~\ref{res-complexity}}.]
Suppose for contradiction that Algorithm~\ref{alg-acc-term} does not terminate within $T$ iterations. It then follows that $x^{t+1}$ and $\tx^{t+1}$ must be generated in Algorithm~\ref{alg-acc-term} for some $T-M < t \le T$ with $\mathrm{mod}(t,M)=0$. In addition, observe that \eqref{opt-gap} also holds for Algorithm~\ref{alg-acc-term}. By $t > T-M$, \eqref{opt-gap}, \eqref{def-T} and \eqref{residual-ineq}, one has
\begin{align*}
& \|\tg_{t+1}^{-1}(x^{t+1}-\tx^{t+1})+\nabla f(\tx^{t+1})-\nabla f(x^{t+1})\| \overset{\eqref{residual-ineq}}{\leq} \left(\sqrt{2\max\{\gamma_0^{-1},L_\chS\delta^{-1}\}}+\sqrt{2\gamma_0}L_\chS\right)\sqrt{F(x^{t+1})-F(x^*)} \\
&\overset{\eqref{opt-gap}}{\leq}  r_0 \left(\sqrt{2\max\{\gamma_0^{-1},L_\chS\delta^{-1}\}}+\sqrt{2\gamma_0}L_\chS\right)  \left(1-\sqrt{\mu\min\left\{\gamma_0,\delta L^{-1}_\chS\right\}} \right)^{t/2} \\
&<\    r_0 \left(\sqrt{2\max\{\gamma_0^{-1},L_\chS\delta^{-1}\}}+\sqrt{2\gamma_0}L_\chS\right)  \left(1-\sqrt{\mu\min\left\{\gamma_0,\delta L^{-1}_\chS\right\}}\ \right)^{(T-M)/2}  \overset{\eqref{def-T}}{\leq} \epsilon.
\end{align*}
which implies that Algorithm~\ref{alg-acc-term} terminates at iteration $t$ and leads to a contradiction. Consequently, Algorithm~\ref{alg-acc-term} must terminate at some iteration $t \le T$ and output $\tx^{t+1}$ that satisfies \eqref{unc-check3}. By this and Lemma \ref{lem:res-term}, one can see that $\dist(0,\partial F(\tx^{t+1})) \leq \epsilon$ and hence $\tx^{t+1}$ is an $\epsilon$-residual solution of problem \eqref{unc-prob}.

In addition, one can observe from Algorithm~\ref{alg-acc-term} that (i) evaluations of $\nabla f$ and proximal operator of $P$ are performed in the backtracking line search procedure (see step 2) and Algorithm \ref{alg-term} (see step 4); (ii) the total number of iterations of Algorithm~\ref{alg-acc-term} is at most $T$; (iii) $n_t$  backtracking trials are performed in each iteration $t$ and each of them requires one evaluation of $\nabla f$ and proximal operator of $P$; (iv) the total number of calls of Algorithm \ref{alg-term} in Algorithm~\ref{alg-acc-term} is at most $T/M$ and each call requires at most $N$ evaluations of $\nabla f$ and proximal operator of $P$ (see Algorithm \ref{alg-term} and Lemma \ref{l8}), where $N$ is given in \eqref{def-N}. By this observation and Theorem \ref{inner}, one can see that the total number of evaluations of $\nabla f$ and proximal operator of $P$ performed in Algorithm~\ref{alg-acc-term} is no more than $\bar N$, respectively.  
\end{proof}

\subsection{Proof of the main results in Subsection \ref{alg-unconstr3}}\label{sec:proof3}

In this subsection we first establish several technical lemmas and then use them to prove Theorem~\ref{thm:cvx-unconstr}.


Let $\{x^k\}_{k\in \bbK}$ denote all the iterates generated by Algorithm \ref{PPA-sp}, where $\bbK$ is a subset of consecutive nonnegative integers starting from $0$. We define $\bbK-1 = \{k-1: k \in \bbK\}$. For any $0\leq k \in \bbK-1$, let $f_k$ and $F_k$ be defined in \eqref{fk-sp}. Also, let $x_*^k$ be defined as
\begin{align} \label{sp-sub}
x^k_*=\argmin_x F_k(x).
\end{align}
Recall that $\alpha_0$, $\gamma_0$ and $\{\rho_k\}$ are the input parameters of Algorithm~\ref{PPA-sp}, and $L_{\nabla_f}$ and $\tL_{\nabla_f}$ are the Lipschitz constant of $\nabla f$ on $\mcQ$ and $\chQ$, respectively.
Let 
\begin{align}
L_k &=L_{\nabla_f}+\rho_k^{-1}, \quad \tL_k=\tL_{\nabla_f}+\rho_k^{-1},  \label{conic-L-sp} \\
\br_k &=\sqrt{F_k(x^k)-F_k(x_*^k)+\frac{\alpha^2_0}{2\gamma_0}\|x^k-x_*^k\|^2}, \label{rk-sp} \\
\cS_k&=\left\{x\in \dom( P):\|x-x_*^k\|\leq\alpha_0^{-1}\sqrt{2\gamma_0}\br_k\right\}, \label{def-Sk-sp} \\
\chS_k &=\left\{x\in \dom( P):\|x-x^k_*\|\leq \left(1+\gamma_0L_k\right)\alpha_0^{-1}\sqrt{2\gamma_0}\br_k\right\}. \label{def-hSk-sp} 
\end{align}
Since $L_{\nabla_f}$ and $\tL_{\nabla_f}$ are respectively the Lipschitz constant of $\nabla f$ on $\mcQ$ and $\chQ$, it then follows from \eqref{fk-sp} that $\nabla f_k$ is $L_k$- and $\tL_k$-Lipschitz continuous on $\mcQ$ and $\chQ$, respectively. In addition, 
by the definition of $L$ and $\tL $ in \eqref{def-Q-sp} and the monotonicity of $\{\rho_k\}$, one has
\begin{equation}
L_k =L_{\nabla_f}+\rho_k^{-1}\leq L, \quad \tL_k=\tL_{\nabla_f}+\rho_k^{-1}\leq \tL.  \label{ineq-L-sp}
\end{equation}

\begin{lemma} \label{AL-bound-sp}
Let $x^k_*$ be defined in \eqref{sp-sub}. Then the following statements hold.
\begin{align}
& \|x^k-x_*^k\|^2+\|x_*^k-x^*\|^2  \leq\|x^k-x^*\|^2 \quad \forall 0 \leq k\in \bbK-1, \label{ppa-bnd1-sp} \\
& \|x^k-x^{k-1}\| \leq\|x^0-x^*\|+\sum_{i=0}^{k-1}\rho_i\eta_i, \quad\|x^k-x^*\|\leq\|x^0-x^*\|+\sum_{i=0}^{k-1}\rho_i\eta_i \quad \forall 1 \leq k\in \bbK. \label{ppa-bnd2-sp} 
\end{align}
\end{lemma}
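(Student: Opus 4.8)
The plan is to prove the two displays in Lemma~\ref{AL-bound-sp} in order, using the optimality of $x^k_*$ for $F_k$ together with the approximate-stationarity bound \eqref{subprob-term-sp}, namely $\dist(0,\partial F_k(x^{k+1}))\leq\eta_k$, which holds because $x^{k+1}$ is the output of Algorithm~\ref{alg-acc-term} applied to $F_k$.

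\medskip
\noindent\textbf{Step 1 (proof of \eqref{ppa-bnd1-sp}).} Recall from \eqref{fk-sp} that $f_k(x)=f(x)+\tfrac{1}{2\rho_k}\|x-x^k\|^2$, so $F_k = F + \tfrac{1}{2\rho_k}\|\cdot-x^k\|^2$ is $\rho_k^{-1}$-strongly convex. Since $x^k_*$ minimizes $F_k$, strong convexity gives $F_k(x^*) \geq F_k(x^k_*) + \tfrac{1}{2\rho_k}\|x^*-x^k_*\|^2$, i.e.
\[
F(x^*)+\tfrac{1}{2\rho_k}\|x^*-x^k\|^2 \;\geq\; F(x^k_*)+\tfrac{1}{2\rho_k}\|x^k_*-x^k\|^2+\tfrac{1}{2\rho_k}\|x^*-x^k_*\|^2 .
\]
Because $x^*$ is a global minimizer of $F$ over $\dom(P)$ (fixed in Section~\ref{alg-unconstr}), $F(x^k_*)\geq F(x^*)$, so the $F$-terms cancel favorably and multiplying by $2\rho_k>0$ yields exactly $\|x^k-x^k_*\|^2+\|x^k_*-x^*\|^2 \leq \|x^k-x^*\|^2$. (In fact this is the standard fact that the proximal point of $F$ at $x^k$, which $x^k_*$ is, is nonexpansive toward any minimizer of $F$.)

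\medskip
\noindent\textbf{Step 2 (proof of \eqref{ppa-bnd2-sp}).} The first estimate will come from bounding $\|x^{k+1}-x^k_*\|$ using the inexactness $\eta_k$, then applying Step~1. Concretely, pick $\xi^{k+1}\in\partial F_k(x^{k+1})$ with $\|\xi^{k+1}\|\leq\eta_k$; by $\rho_k^{-1}$-strong convexity of $F_k$ and optimality of $x^k_*$ (so $0\in\partial F_k(x^k_*)$), monotonicity of $\partial F_k$ shifted by strong convexity gives $\langle \xi^{k+1}-0,\,x^{k+1}-x^k_*\rangle \geq \rho_k^{-1}\|x^{k+1}-x^k_*\|^2$, hence $\|x^{k+1}-x^k_*\|\leq \rho_k\|\xi^{k+1}\|\leq \rho_k\eta_k$. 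Now write $x^{k+1}-x^k = (x^{k+1}-x^k_*)+(x^k_*-x^k)$; by \eqref{ppa-bnd1-sp}, $\|x^k_*-x^k\|\leq\|x^k-x^*\|$, so $\|x^{k+1}-x^k\|\leq \rho_k\eta_k + \|x^k-x^*\|$. Similarly $\|x^{k+1}-x^*\|\leq \|x^{k+1}-x^k_*\|+\|x^k_*-x^*\| \leq \rho_k\eta_k + \|x^k-x^*\|$, again using \eqref{ppa-bnd1-sp}. Iterating the last recursion from $k$ down to $0$ and using $r_0=\|x^0-x^*\|$ (notation in \eqref{def1-sp}) gives $\|x^k-x^*\|\leq \|x^0-x^*\| + \sum_{i=0}^{k-1}\rho_i\eta_i$, which is the second bound; substituting this back into $\|x^{k+1}-x^k\|\leq\rho_k\eta_k+\|x^k-x^*\|$ and relabeling indices yields the first bound $\|x^k-x^{k-1}\|\leq\|x^0-x^*\|+\sum_{i=0}^{k-1}\rho_i\eta_i$.

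\medskip
\noindent\textbf{Main obstacle.} The only subtlety is handling the inexact solve cleanly: one must convert the subdifferential bound $\dist(0,\partial F_k(x^{k+1}))\leq\eta_k$ into the iterate bound $\|x^{k+1}-x^k_*\|\leq\rho_k\eta_k$, which relies on the strong convexity modulus $\rho_k^{-1}$ of $F_k$ and on $x^k_*$ being its \emph{exact} minimizer. Everything else is an application of the Pythagorean-type identity for proximal points (Step~1) plus a straightforward induction on $k$; no Lipschitz constants or the sets $\cS_k,\chS_k$ are needed for this particular lemma.
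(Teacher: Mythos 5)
Your proof is correct, and it establishes exactly the same three key facts as the paper's argument --- that $x^k_*$ is the exact proximal point of $F$ at $x^k$, that $\|x^{k+1}-x^k_*\|\le\rho_k\eta_k$, and the Pythagorean-type inequality \eqref{ppa-bnd1-sp} --- followed by the same telescoping. The difference is purely in how those facts are obtained: the paper casts Algorithm~\ref{PPA-sp} as an inexact proximal point algorithm for the maximal monotone operator $\mcT=\partial F$ and cites \cite[Propositions 1 and 3]{Rock76a} for the firm-nonexpansiveness inequality and the bound $\|x^{k+1}-(I+\rho_k\mcT)^{-1}(x^k)\|\le\rho_k\eta_k$ (this is \eqref{inexact-ppa-sp}), and then \cite[Lemma 3]{lu2018iteration} for the accumulation of errors; you instead derive the same estimates directly from the $\rho_k^{-1}$-strong convexity of $F_k$ (the quadratic growth at the minimizer $x^k_*$ for Step~1, and strong monotonicity of $\partial F_k$ together with Cauchy--Schwarz for Step~2), plus an explicit induction. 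Your route is more elementary and self-contained --- it needs nothing beyond convexity of $F$ and the definition \eqref{fk-sp} --- whereas the paper's route is shorter and makes transparent that the same argument covers the primal-dual operator $\mcT_l$ used for the analogous Lemma~\ref{AL-bound} in the constrained setting, where a purely primal strong-convexity argument would not directly apply. One cosmetic remark: your cancellation in Step~1 uses $F(x^k_*)\ge F(x^*)$, which holds since $x^*$ minimizes $F$ and $x^k_*\in\dom(P)$; this is fine, but it is worth noting that the resulting inequality is exactly the firm nonexpansiveness of the resolvent that the paper imports from \cite{Rock76a}.
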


\begin{proof}
One can observe that Algorithm \ref{PPA-sp} is an inexact proximal point algorithm (PPA)  \cite{Rock76a} applied to the monotone inclusion problem $0\in \mcT(x)$, where $\mcT: \rr^n\rightrightarrows\rr^n$ is a maximal monotone set-valued operator defined as
\[
\mcT(x)=\left\{\ba{ll} 
\partial F(x)  & \mbox{if } x\in\dom(P), \\ 
\emptyset  & \mbox{otherwise},
\ea\right.  \qquad \forall x\in\rr^n.
\]
In addition, one can observe from \eqref{subprob-term-sp} and \eqref{sp-sub} that 
$\dist(0, \mcT(x^{k+1})+\rho_k^{-1}(x^{k+1}-x^k)) \leq \eta_k$ and $x^k_*=(I+\rho_k\mcT)^{-1}(x^k)$. It then follows from \cite[Proposition 3]{Rock76a} that 
\beq \label{inexact-ppa-sp}
\|x^{k+1}-(I+\rho_k\mcT)^{-1}(x^k)\| \leq \rho_k\eta_k \quad \forall k\in\bbK-1.
\eeq 
By this, $0\in\mcT(x^*)$, $x^k_*=(I+\rho_k\mcT)^{-1}(x^k)$ and  \cite[Proposition 1]{Rock76a}, one can see that \eqref{ppa-bnd1-sp} holds. In addition, \eqref{ppa-bnd2-sp} follows from  \eqref{inexact-ppa-sp} and \cite[Lemma 3]{lu2018iteration}.
\end{proof}

As a consequence of Lemma~\ref{AL-bound-sp} and the definition of $r_0$ and $\theta$ in \eqref{def1-sp}, one has that 
\begin{equation}\label{AL-ineq-sp}
\|x^0-x_*^0\|\leq r_0, \;\; \|x^k-x^*\|\leq r_0+\theta,\;\;\|x^k-x_*^k\|\leq r_0+\theta, \;\;\|x^k-x^{k-1}\|\leq r_0+\theta, \quad \forall 1 \leq k\in \bbK.
\end{equation}

\begin{lemma}
Let $\tr0$ and $\br^k$ be defined in \eqref{def-tP-sp} and \eqref{rk-sp}.  Then for all $0\leq k\in\bbK-1$, we have
\begin{equation}\label{l11-potential-sp}
\br_k^2 \leq\alpha_0^2\tr0^2/(2\gamma_0).
\end{equation}
\end{lemma}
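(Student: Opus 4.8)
The plan is to split according to whether $k=0$ or $k\geq 1$, and in each case bound the two pieces of $\bar r_k^2$, namely $F_k(x^k)-F_k(x^k_*)$ and $\frac{\alpha_0^2}{2\gamma_0}\|x^k-x^k_*\|^2$, by the two summands appearing (after multiplication by $\alpha_0^2/(2\gamma_0)$) under the maximum in the definition \eqref{def-tP-sp} of $\tr0$. The only inputs needed are the bounds already recorded in \eqref{AL-ineq-sp} (which give $\|x^0-x^0_*\|\leq r_0$ and $\|x^k-x^k_*\|,\|x^k-x^{k-1}\|\leq r_0+\theta$), the residual bound \eqref{subprob-term-sp} at the previous outer iteration, and the monotonicity $\eta_{k-1}=\eta_0\sigma^{k-1}\leq\eta_0$, $\rho_{k-1}^{-1}=\rho_0^{-1}\zeta^{-(k-1)}\leq\rho_0^{-1}$.

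For $k=0$: since $F_0=F+\frac{1}{2\rho_0}\|\cdot-x^0\|^2$, we have $F_0(x^0)=F(x^0)$ and $F_0(x^0_*)\geq\min_x F(x)=F(x^*)$, hence $F_0(x^0)-F_0(x^0_*)\leq F(x^0)-F(x^*)$. Combining this with $\|x^0-x^0_*\|\leq r_0$ from \eqref{AL-ineq-sp} gives $\bar r_0^2\leq (F(x^0)-F(x^*))+\frac{\alpha_0^2}{2\gamma_0}r_0^2=\frac{\alpha_0^2}{2\gamma_0}\left(2\gamma_0\alpha_0^{-2}(F(x^0)-F(x^*))+r_0^2\right)\leq\frac{\alpha_0^2}{2\gamma_0}\tr0^2$, the last step using that the bracketed quantity is the first term under the max in \eqref{def-tP-sp}.

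For $k\geq 1$: I will exploit the residual bound at iteration $k-1$. Since $\partial F_{k-1}(x)=\partial F(x)+\rho_{k-1}^{-1}(x-x^{k-1})$, the inequality $\dist(0,\partial F_{k-1}(x^k))\leq\eta_{k-1}$ produces a subgradient $p\in\partial F(x^k)$ with $\|p\|\leq\eta_{k-1}+\rho_{k-1}^{-1}\|x^k-x^{k-1}\|\leq\eta_0+\rho_0^{-1}(r_0+\theta)$, where the last step uses \eqref{AL-ineq-sp} and the monotonicity noted above. Then, since $F_k(x^k)=F(x^k)$ while $F_k(x^k_*)\geq F(x^k_*)\geq F(x^k)+\langle p,x^k_*-x^k\rangle$ by convexity of $F$, we obtain $F_k(x^k)-F_k(x^k_*)\leq\|p\|\,\|x^k-x^k_*\|\leq(\eta_0+\rho_0^{-1}(r_0+\theta))(r_0+\theta)$. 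Adding $\frac{\alpha_0^2}{2\gamma_0}\|x^k-x^k_*\|^2\leq\frac{\alpha_0^2}{2\gamma_0}(r_0+\theta)^2$ (again from \eqref{AL-ineq-sp}) yields $\bar r_k^2\leq\frac{\alpha_0^2}{2\gamma_0}\left(2\gamma_0\alpha_0^{-2}(r_0+\theta)(\eta_0+\rho_0^{-1}(r_0+\theta))+(r_0+\theta)^2\right)\leq\frac{\alpha_0^2}{2\gamma_0}\tr0^2$, now matching the second term under the max in \eqref{def-tP-sp}.

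The main (and essentially only) obstacle is the case $k\geq 1$: one must realize that the a priori information available about $x^k$ is the residual of the \emph{previous} subproblem $F_{k-1}$, convert it into a bona fide subgradient of $F$ at $x^k$ of controlled norm via the decomposition of $\partial F_{k-1}$, and then rely on plain convexity of $F$ (not strong convexity of $F_k$) to lower bound $F_k(x^k_*)$ — observing that the proximal term $\frac{1}{2\rho_k}\|\cdot-x^k\|^2$ inside $F_k$ can simply be discarded since it is nonnegative. The remainder is bookkeeping with the estimates in \eqref{AL-ineq-sp}.
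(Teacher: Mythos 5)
Your proof is correct and follows essentially the same route as the paper's: for $k=0$ both arguments use $F_0(x^0)=F(x^0)$ and $F_0(x^0_*)\geq F(x^*)$ together with $\|x^0-x^0_*\|\leq r_0$, and for $k\geq 1$ both convert the residual bound $\dist(0,\partial F_{k-1}(x^k))\leq\eta_{k-1}$ into a subgradient of controlled norm and invoke plain convexity with the estimates of \eqref{AL-ineq-sp}. The only cosmetic difference is that the paper works with the subgradient of $F_k$ at $x^k$ directly while you pass to a subgradient of $F$ and discard the nonnegative proximal term, which yields the identical numerical bound.
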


\begin{proof}
We first prove that \eqref{l11-potential-sp} holds for $k=0$, that is, $\br_0^2 \leq\alpha_0^2\tr0^2/(2\gamma_0)$. By \eqref{unc-prob}, \eqref{fk-sp} and the definition of $x^*$, one has 
\begin{align*}
F_0(x^0_*) =F(x^0_*)+\frac{1}{2\rho_0} \|x^0_*-x^0\|^2\geq F(x^*), \quad F_0(x^0) =F(x^0).
\end{align*}
It then follows from these, \eqref{def-tP-sp}, \eqref{rk-sp}, and \eqref{AL-ineq-sp} that
\begin{align*}
\br_0^2 \overset{\eqref{rk-sp}}{=} F_0(x^0)- F_0(x_*^0)+\frac{\alpha^2_0}{2\gamma_0}\|x^0-x_*^0\|^2
\leq F(x^0)-F(x^*)+\frac{\alpha^2_0r_0^2}{2\gamma_0}
\overset{\eqref{def-tP-sp}}{\leq}\frac{\alpha_0^2\tr0^2}{2\gamma_0}.
\end{align*}

We next show that \eqref{l11-potential-sp} holds for all $1 \leq k\in\bbK-1$. It follows from  \eqref{fk-sp} and \eqref{subprob-term-sp} that there exists $P'(x^k)\in\partial P(x^k)$ such that 
\begin{equation} \label{subg-F1-sp}
F'_{k-1}(x^k)=\nabla f(x^k)+\rho_{k-1}^{-1}(x^k-x^{k-1})+P'(x^k) \in \partial F_{k-1}(x^k), \quad \|F'_{k-1}(x^k)\|\leq\eta_{k-1}.
\end{equation}
Also, we have
\begin{equation*}
\nabla f(x^k)+P'(x^k)\in\partial F_k(x^k),
\end{equation*}
which together with \eqref{subg-F1-sp} yields
\beq \label{subg-F2-sp}
F'_{k-1}(x^k)-\rho_{k-1}^{-1}(x^k-x^{k-1}) \in \partial F_k(x^k).
\eeq
 By the convexity of $F$, $\eta_{k-1} \le \eta_0$, $\rho_{k-1} \ge \rho_0$, \eqref{AL-ineq-sp} and \eqref{subg-F2-sp}, one has
\begin{align*}
F_k(x^k)-F_k(x_*^k)\overset{\eqref{subg-F2-sp}}{\leq}& \ \langle F'_{k-1}(x^k)-\rho_{k-1}^{-1}(x^k-x^{k-1}),x^k-x_*^k\rangle 
\leq  (\|F'_{k-1}(x^k)\|+\rho_{k-1}^{-1}\|x^k-x^{k-1}\|)\|x^k-x_*^k\|\\
\overset{\eqref{AL-ineq-sp}}{\leq}& \ \eta_0(r_0+\theta)+\rho_0^{-1}(r_0+\theta)^2.
\end{align*}
This together with \eqref{rk-sp} and \eqref{AL-ineq-sp} yields
\[
 \br_k^2 =\ F_k(x^k)-F_k(x_*^k)+\frac{\alpha^2_0}{2\gamma_0}\|x^k-x_*^k\|^2
 \leq \eta_0(r_0+\theta)+\rho_0^{-1}(r_0+\theta)^2+\frac{\alpha^2_0(r_0+\theta)^2}{2\gamma_0}.
\]
By this relation and the definition of $\tr0$ in \eqref{def-tP-sp}, one can see that \eqref{l11-potential-sp} holds for all $1 \leq k\in\bbK-1$.
\end{proof}

\begin{lemma}\label{l11-sp}
Let $f_k$,  $L_k$, $\tL_k$, $\cS_k$ and $\chS_k$ be respectively defined in \eqref{fk-sp}, \eqref{conic-L-sp}, \eqref{def-Sk-sp} and \eqref{def-hSk-sp}. Then for all $0\leq k\in\bbK-1$, $\nabla f_k$ is Lipschitz continuous on $\cS_k$ and  $\chS_k$ with Lipschitz constants $L_k$ and $\tL_k$, respectively.
\end{lemma}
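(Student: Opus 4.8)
The plan is to reduce the lemma to the two set inclusions $\cS_k\subseteq\mcQ$ and $\chS_k\subseteq\chQ$, after which the claimed Lipschitz bounds follow at once from the decomposition of $\nabla f_k$. First I would record from \eqref{fk-sp} that $\nabla f_k(x)=\nabla f(x)+\rho_k^{-1}(x-x^k)$, so $\nabla f_k$ differs from $\nabla f$ by the $\rho_k^{-1}$-Lipschitz affine map $x\mapsto\rho_k^{-1}(x-x^k)$. Hence, on any subset of $\mcQ$ (resp.\ $\chQ$), where $\nabla f$ is $L_{\nabla f}$-Lipschitz (resp.\ $\tL_{\nabla f}$-Lipschitz) by the choice of $L_{\nabla f}$ and $\tL_{\nabla f}$ preceding \eqref{conic-L-sp}, the triangle inequality shows that $\nabla f_k$ is $(L_{\nabla f}+\rho_k^{-1})$-Lipschitz (resp.\ $(\tL_{\nabla f}+\rho_k^{-1})$-Lipschitz), which is exactly $L_k$ (resp.\ $\tL_k$) by \eqref{conic-L-sp}.

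For the inclusions, the crucial preliminary estimate is $\|x^k_*-x^*\|\le r_0+\theta$ for every $0\le k\in\bbK-1$: indeed \eqref{ppa-bnd1-sp} gives $\|x^k_*-x^*\|\le\|x^k-x^*\|$, and then $\|x^k-x^*\|\le r_0+\theta$ by \eqref{AL-ineq-sp} when $k\ge1$, while for $k=0$ one has $\|x^0-x^*\|=r_0\le r_0+\theta$ since $\theta>0$. Now fix $x\in\cS_k$. Using \eqref{def-Sk-sp}, the bound $\alpha_0^{-1}\sqrt{2\gamma_0}\br_k\le\tr0$ coming from \eqref{l11-potential-sp}, and the preliminary estimate, the triangle inequality yields $\|x-x^*\|\le\|x-x^k_*\|+\|x^k_*-x^*\|\le\tr0+r_0+\theta$, so $x\in\mcQ$ by \eqref{def-P-sp}. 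For $x\in\chS_k$ the same computation, now combined with \eqref{def-hSk-sp}, \eqref{l11-potential-sp}, and $L_k\le L$ from \eqref{ineq-L-sp}, gives $\|x-x^*\|\le(1+\gamma_0L_k)\tr0+r_0+\theta\le(1+\gamma_0L)\tr0+r_0+\theta$, so $x\in\chQ$ by \eqref{def-Q-sp}.

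Combining the two steps, for $x,y\in\cS_k\subseteq\mcQ$ we get $\|\nabla f_k(x)-\nabla f_k(y)\|\le\|\nabla f(x)-\nabla f(y)\|+\rho_k^{-1}\|x-y\|\le(L_{\nabla f}+\rho_k^{-1})\|x-y\|=L_k\|x-y\|$, and the identical argument on $\chS_k\subseteq\chQ$ with $\tL_{\nabla f}$ in place of $L_{\nabla f}$ gives the $\tL_k$-Lipschitz bound there. There is no genuine obstacle in this proof; the only points requiring care are assembling the previously established bounds \eqref{ppa-bnd1-sp}, \eqref{AL-ineq-sp}, \eqref{l11-potential-sp}, and \eqref{ineq-L-sp} in the right order, and handling the base case $k=0$, where \eqref{AL-ineq-sp} supplies $\|x^0-x^0_*\|\le r_0$ rather than a bound involving $\theta$.
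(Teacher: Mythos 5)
Your proposal is correct and follows essentially the same route as the paper: both reduce the lemma to the inclusions $\cS_k\subseteq\mcQ$ and $\chS_k\subseteq\chQ$ via the triangle inequality together with \eqref{ppa-bnd1-sp}, \eqref{AL-ineq-sp}, \eqref{l11-potential-sp} and \eqref{ineq-L-sp}, and then invoke the fact (stated in the paper just after \eqref{def-hSk-sp}) that $\nabla f_k=\nabla f+\rho_k^{-1}(\cdot-x^k)$ is $L_k$- and $\tL_k$-Lipschitz on $\mcQ$ and $\chQ$. Your explicit handling of the $k=0$ base case is a small clarification the paper leaves implicit, but it changes nothing substantive.
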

\begin{proof}
Let $\mcQ$ and $\chQ$ be defined in \eqref{def-P-sp} and \eqref{def-Q-sp}. We first show that 
$\cS_k\subseteq\mcQ$ and $\chS_k\subseteq\chQ$ for all $0\leq k\in\bbK-1$. To this end, fix any $0\leq k\in\bbK-1$. By \eqref{def-Sk-sp}, \eqref{AL-ineq-sp} and \eqref{l11-potential-sp}, one has that for all $x\in\cS_k$, 
\begin{equation*}
\|x-x^*\|\leq\|x-x_*^k\|+\|x_*^k-x^*\|\overset{\eqref{def-Sk-sp} }{\leq} \alpha_0^{-1}\sqrt{2\gamma_0}\br_k +\|x_*^k-x^*\| \leq \tr0+r_0+\theta,
\end{equation*}
where the last inequality follows from \eqref{AL-ineq-sp} and \eqref{l11-potential-sp}. This together with \eqref{def-P-sp} implies that $\cS_k\subseteq\mcQ$. In addition, using  \eqref{def-hSk-sp}, \eqref{ineq-L-sp}, \eqref{AL-ineq-sp} and \eqref{l11-potential-sp}, we obtain that for all $x\in\chS_k$, 
\begin{align*}
\|x-x^*\| \leq&\  \|x-x_*^k\|+\|x_*^k-x^*\|\overset{\eqref{def-hSk-sp} }{\leq} \left(1+\gamma_0L_k\right)\alpha_0^{-1}\sqrt{2\gamma_0}\br_k + \|x_*^k-x^*\| \\
\leq&\  (1+\gamma_0L_k)\tr0+r_0+\theta \overset{\eqref{ineq-L-sp}}{\leq} (1+\gamma_0L)\tr0+r_0+\theta,
\end{align*}
which along with \eqref{def-Q-sp} implies that $\chS_k\subseteq\chQ$.

Recall that $\nabla f_k$ is $L_k$- and $\tL_k$-Lipschitz continuous on $\mcQ$ and $\chQ$, respectively. The conclusion of this lemma then follows from this fact and the relations $\cS_k\subseteq\mcQ$ and $\chS_k\subseteq\chQ$ for all $0\leq k\in\bbK-1$.
\end{proof}

\begin{lemma}\label{prop-sub-sp}
Let $N_k$ denote the number of evaluations of $\nabla f$ and proximal operator of $P$ performed by Algorithm~\ref{alg-acc-term} at the $k$th outer iteration of  Algorithm~\ref{PPA-sp}. Then for all $0 \le k\in \bbK-1$, it holds that 
\begin{equation}\label{def-Nk-sp}
N_k \leq \tC_1\left(M+1+\frac{\left(\log\frac{\alpha_0^2\tr0^2\left(\sqrt{\max\{\gamma_0^{-2},\gamma_0^{-1}\tL\delta^{-1}\}}+\tL\right)^2}{\eta_k^2}\right)_+}{\sqrt{\rho_k^{-1}\min\left\{\gamma_0,\delta \tL^{-1}\right\}}}\right) ,
\end{equation}
where $M$, $\delta$, $\alpha_0$, $\gamma_0$, $\{\rho_k\}$ and $\{\eta_k\}$ are the input parameters of Algorithm~\ref{PPA-sp}, and $\tr0$, $\tL$ and $\tC_1$ are given in \eqref{def-tP-sp}, \eqref{def-Q-sp} and \eqref{C1-sp}, respectively. 
\end{lemma}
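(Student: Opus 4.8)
The plan is to invoke Theorem~\ref{res-complexity} on the strongly convex subproblem $\min_x F_k(x)$ that Algorithm~\ref{PPA-sp} solves at its $k$th outer iteration, and then to bound the operation‑complexity estimate thus obtained by the right‑hand side of \eqref{def-Nk-sp} through a short chain of elementary monotonicity inequalities.

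First I would record that at the $k$th outer iteration Algorithm~\ref{alg-acc-term} is run on \eqref{unc-prob} with $F\leftarrow F_k$, $f\leftarrow f_k$, $\epsilon\leftarrow\eta_k$, $\mu\leftarrow\rho_k^{-1}$, $x^1=z^1\leftarrow x^k$ and parameters $\alpha_0,\gamma_0,\delta,M$, and that these meet the hypotheses of Theorem~\ref{res-complexity}: $f_k$ is proper closed convex with $\nabla f_k$ locally Lipschitz on $\cl(\dom(P))$, \eqref{convex-1} holds for $f_k$ with $\mu=\rho_k^{-1}>0$, and the monotonicity of $\{\rho_k\}$ gives $\gamma_0\le\rho_0\le\rho_k$ and $\alpha_0\ge\sqrt{\rho_0^{-1}\gamma_0}\ge\sqrt{\rho_k^{-1}\gamma_0}$, as already noted in the remark after Algorithm~\ref{PPA-sp} (which also guarantees the call terminates). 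Next I would match the objects of Theorem~\ref{res-complexity} for this subproblem: the minimizer ``$x^*$'' is the unique $x^k_*$ of \eqref{sp-sub}; the quantity ``$r_0$'' of \eqref{def-S} equals $\br_k$ by \eqref{rk-sp}; the sets ``$\cS$'' and ``$\chS$'' of \eqref{def-S}, \eqref{def-hS} are $\cS_k$ and $\chS_k$ of \eqref{def-Sk-sp}, \eqref{def-hSk-sp}; and by Lemma~\ref{l11-sp} one may take ``$L_\cS$'' $=L_k$ and ``$L_\chS$'' $=\tL_k$ in Lemma~\ref{F-Lipschitz}. With these identifications, the bound $\bar N$ of Theorem~\ref{res-complexity} yields
\[
N_k\le(1+M^{-1})\left(M+\left\lceil\frac{2\log\frac{\eta_k}{\br_k B_k}}{\log\bigl(1-\sqrt{\rho_k^{-1}\min\{\gamma_0,\delta\tL_k^{-1}\}}\bigr)}\right\rceil_+\right)\left(1+\left\lceil\frac{\log(\gamma_0\tL_k)}{\log(1/\delta)}\right\rceil_+\right),
\]
where $B_k=\sqrt{2\max\{\gamma_0^{-1},\tL_k\delta^{-1}\}}+\sqrt{2\gamma_0}\,\tL_k$.

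The rest is simplification. From $\tL_k\le\tL$ (see \eqref{ineq-L-sp}) and monotonicity of $\lceil\cdot\rceil_+$, the last factor is at most $1+\lceil\log(\gamma_0\tL)/\log(1/\delta)\rceil_+$, so $(1+M^{-1})$ times it is at most $C_1$ by \eqref{C1-sp}. For the middle factor I would use $\lceil s\rceil_+\le 1+(s)_+$ together with $\log(1-a)\le-a$ applied to $a=\sqrt{\rho_k^{-1}\min\{\gamma_0,\delta\tL_k^{-1}\}}\in(0,1)$ to obtain
\[
\left\lceil\frac{2\log\frac{\eta_k}{\br_k B_k}}{\log\bigl(1-\sqrt{\rho_k^{-1}\min\{\gamma_0,\delta\tL_k^{-1}\}}\bigr)}\right\rceil_+\le 1+\frac{\bigl(\log(\br_k^2 B_k^2/\eta_k^2)\bigr)_+}{\sqrt{\rho_k^{-1}\min\{\gamma_0,\delta\tL_k^{-1}\}}}.
\]
Then \eqref{l11-potential-sp} gives $\br_k^2\le\alpha_0^2\tr0^2/(2\gamma_0)$, and combining $B_k^2=2\bigl(\sqrt{\max\{\gamma_0^{-1},\tL_k\delta^{-1}\}}+\sqrt{\gamma_0}\,\tL_k\bigr)^2$ with the identity $\frac{1}{\gamma_0}\bigl(\sqrt{\max\{\gamma_0^{-1},\tL_k\delta^{-1}\}}+\sqrt{\gamma_0}\,\tL_k\bigr)^2=\bigl(\sqrt{\max\{\gamma_0^{-2},\gamma_0^{-1}\tL_k\delta^{-1}\}}+\tL_k\bigr)^2$ yields $\br_k^2 B_k^2\le\alpha_0^2\tr0^2\bigl(\sqrt{\max\{\gamma_0^{-2},\gamma_0^{-1}\tL_k\delta^{-1}\}}+\tL_k\bigr)^2\le\alpha_0^2\tr0^2\bigl(\sqrt{\max\{\gamma_0^{-2},\gamma_0^{-1}\tL\delta^{-1}\}}+\tL\bigr)^2$ (using $\tL_k\le\tL$ again); likewise $\tL_k\le\tL$ gives $\sqrt{\rho_k^{-1}\min\{\gamma_0,\delta\tL_k^{-1}\}}\ge\sqrt{\rho_k^{-1}\min\{\gamma_0,\delta\tL^{-1}\}}$. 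Assembling these bounds with the monotonicity of $\log$ and $(\cdot)_+$ produces exactly \eqref{def-Nk-sp}.

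I do not expect a conceptual obstacle; the argument is bookkeeping. The two points that need care are: (a) making the substitution into Theorem~\ref{res-complexity} rigorous, i.e.\ verifying that the generic sets and constants $\cS,\chS,r_0,L_\chS$ for the perturbed objective $f_k$ coincide with $\cS_k,\chS_k,\br_k,\tL_k$ — this is precisely the role of Lemma~\ref{l11-sp} and the estimate \eqref{l11-potential-sp}; and (b) the algebraic rearrangement of $B_k^2/\gamma_0$ into the exact form appearing inside the logarithm in \eqref{def-Nk-sp}, so that the final comparison is an identity and not a lossy bound.
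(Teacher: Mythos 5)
Your proposal is correct and follows essentially the same route as the paper's proof: invoke Theorem~\ref{res-complexity} on the $k$th subproblem with the substitution $(r_0,\mu,\epsilon,L_{\chS})\to(\br_k,\rho_k^{-1},\eta_k,\tL_k)$ justified by Lemma~\ref{l11-sp}, then simplify via $\lceil s\rceil_+\le 1+(s)_+$, $-\log(1-\xi)\ge\xi$, \eqref{l11-potential-sp} and $\tL_k\le\tL$. The algebraic identity you flag in point (b) is exactly the rearrangement the paper performs, so there is no gap.
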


\begin{proof}
Notice that at the $k$th outer iteration of Algorithm~\ref{PPA-sp}, Algorithm~\ref{alg-acc-term} is called to find an $\eta_k$-residual solution $x^{k+1}$ of the problem $\min_x \left\{f_k(x)+P(x)\right\}$ with the inputs $\epsilon \leftarrow\eta_k$, $\mu \leftarrow\rho_k^{-1}$ and $x^1=z^1 \leftarrow x^k$. In view of \eqref{rk-sp}, \eqref{def-Sk-sp}, \eqref{def-hSk-sp}, Lemma \ref{l11-sp} and Theorem \ref{res-complexity}, one can replace $(r_0, \mu, \epsilon, L_\chS)$ in \eqref{def-NN} by $(\br_k, \rho_k^{-1}, \eta_k, \tL_k)$ respectively and obtain that
\begin{align*}
N_k &\leq (1+M^{-1})\left(M+\left\lceil\frac{2\log\frac{\eta_k}{\br_k\left(\sqrt{2\max\{\gamma_0^{-1},\tL_k\delta^{-1}\}}+\sqrt{2\gamma_0}\tL_k\right)}}{\log\left(1-\sqrt{\rho_k^{-1}\min\left\{\gamma_0,\delta \tL_k^{-1}\right\}}\,\right)}\right\rceil_+\right) \left(1+\left\lceil\frac{\log(\gamma_0\tL_k)}{\log(1/\delta)}\right\rceil_+\right) \notag \\
&\leq (1+M^{-1})\left(M+1+\frac{\left(\log\frac{2\br_k^2\left(\sqrt{\max\{\gamma_0^{-1},\tL_k\delta^{-1}\}}+\sqrt{\gamma_0}\tL_k\right)^2}{\eta_k^2}\right)_+}{-\log\left(1-\sqrt{\rho_k^{-1}\min\left\{\gamma_0,\delta \tL_k^{-1}\right\}}\,\right)}\right) \left(1+\left\lceil\frac{\log(\gamma_0\tL_k)}{\log(1/\delta)}\right\rceil_+\right) \notag \\
&\leq (1+M^{-1})\left(M+1+\frac{\left(\log\frac{2\gamma_0\br_k^2\left(\sqrt{\max\{\gamma_0^{-2},\gamma_0^{-1}\tL_k\delta^{-1}\}}+\tL_k\right)^2}{\eta_k^2}\right)_+}{\sqrt{\rho_k^{-1}\min\left\{\gamma_0,\delta \tL_k^{-1}\right\}}}\right) \left(1+\left\lceil\frac{\log(\gamma_0\tL_k)}{\log(1/\delta)}\right\rceil_+\right),
\end{align*}
where the last inequality follows from the fact that $-\log(1-\xi) \ge \xi$ for any $\xi \in(0,1)$.   By  the above inequality, \eqref{C1-sp}, \eqref{ineq-L-sp} and \eqref{l11-potential-sp}, one can see that \eqref{def-Nk-sp} holds.
\end{proof}

We are now ready to prove Theorem~\ref{thm:cvx-unconstr}.

\begin{proof}[\textbf{Proof of Theorem~\ref{thm:cvx-unconstr}}.]
(i) Let $K$ be defined in \eqref{K-sp}. We first show that Algorithm~\ref{PPA-sp} terminates after at most $K+1$ outer iterations. Indeed, suppose for contradiction that it runs for more than $K+1$ outer iterations. It then follows that 
\eqref{ppa-term-sp} does not hold for $k=K$. On the other hand, by \eqref{K-sp}, \eqref{AL-ineq-sp}, $\rho_K=\rho_0\zeta^K$ and  $\eta_K=\eta_0\sigma^K$, one has 
\[
\frac{1}{\rho_K}\|x^{K+1}-x^K\| \leq \frac{r_0+\theta}{\rho_0\zeta^K} \overset{\eqref{K-sp}}{\leq} \frac{\varepsilon}{2}, \qquad \eta_K=\eta_0\sigma^K\overset{\eqref{K-sp}}{\leq}\frac{\varepsilon}{2},
\]
and hence \eqref{ppa-term-sp} holds for $k=K$, which leads to a contradiction. Hence, there exists some $0\leq k \le K$ such that \eqref{ppa-term-sp} holds and Algorithm~\ref{PPA-sp} terminates and outputs $x^{k+1}$. We next show that $x^{k+1}$ is an $\varepsilon$-residual solution of problem \eqref{unc-prob}. Indeed, it follows from \eqref{fk-sp} and \eqref{ppa-term-sp} that 
\begin{align*}
 \dist(0, \partial F(x^{k+1})) & \le \dist(0, \partial F(x^{k+1})+\rho_k^{-1}(x^{k+1}-x^k)) + \rho_k^{-1}
\|x^{k+1}-x^k\| \\
& \overset{\eqref{fk-sp}}{=} \dist(0, \partial F_k(x^{k+1})) + \rho_k^{-1} \|x^{k+1}-x^k\| \leq \eta_k  + \rho_k^{-1}
\|x^{k+1}-x^k\| \overset{\eqref{ppa-term-sp}}{\leq} \varepsilon,
\end{align*}
and hence the output $x^{k+1}$  of Algorithm~\ref{PPA-sp} is an $\varepsilon$-residual solution of problem \eqref{unc-prob}.

(ii) Let $K$ and $\widetilde N$ be defined in \eqref{K-sp} and \eqref{complexity-sp}, and let $N_k$ denote the number of evaluations of $\nabla f$ and proximal operator of $P$ performed by Algorithm~\ref{alg-acc-term} at the $k$th outer iteration of Algorithm~\ref{PPA-sp}. By this and statement (i) of this theorem, one can observe that the total number of evaluations of $\nabla f$ and proximal operator of $P$ performed in Algorithm~\ref{PPA-sp} is no more than $\sum_{k=0}^{|\bbK|-2} N_k $. As a result, to prove statement (ii) of this theorem, it suffices to show that $\sum_{k=0}^{|\bbK|-2} N_k \le \widetilde N$. Indeed, in view of \eqref{C2-sp}, \eqref{K-sp}, \eqref{def-Nk-sp}, $|\bbK|-2 \le K$, $\rho_k=\rho_0\zeta^k$ and $\eta_k=\eta_0\sigma^k$, one has
\begin{align*}
\sum_{k=0}^{|\bbK|-2}N_k & \leq \tC_1 \sum_{k=0}^{K} \left(M+1+\frac{\sqrt{\rho_k}\left(\log\frac{\alpha_0^2\tr0^2\left(\sqrt{\max\{\gamma_0^{-2},\gamma_0^{-1}\tL\delta^{-1}\}}+\tL\right)^2}{\eta_k^2}\right)_+}{\min\left\{\sqrt{\gamma_0},\sqrt{\delta \tL^{-1}}\right\}}\right) \\
& = \tC_1\sum_{k=0}^{K}\left(M+1+\frac{\sqrt{\rho_0}\sqrt{\zeta}^k\left(-2k\log\sigma+\log\frac{\alpha_0^2\tr0^2\left(\sqrt{\max\{\gamma_0^{-2},\gamma_0^{-1}\tL\delta^{-1}\}}+\tL\right)^2}{\eta_0^2}\right)_+}{\min\left\{\sqrt{\gamma_0},\sqrt{\delta \tL^{-1}}\right\}}\right)\\
& \leq \tC_1\left((M+1)(K+1)+\frac{\sqrt{\rho_0}\sqrt{\zeta}^{K+1}\left(-2K\log\sigma+\log\frac{\alpha_0^2\tr0^2\left(\sqrt{\max\{\gamma_0^{-2},\gamma_0^{-1}\tL\delta^{-1}\}}+\tL\right)^2}{\eta_0^2}\right)_+}{(\sqrt{\zeta}-1)\min\left\{\sqrt{\gamma_0},\sqrt{\delta \tL^{-1}}\right\}}\right)  \leq\widetilde N,
\end{align*}
where the first inequality follows from \eqref{def-Nk-sp}, the second inequality is due to 
$\sum^K_{k=0}\sqrt{\zeta}^k \le \sqrt{\zeta}^{K+1}/(\sqrt{\zeta}-1)$ and
$\sum^K_{k=0}k \sqrt{\zeta}^k \le K\sqrt{\zeta}^{K+1}/(\sqrt{\zeta}-1)$, and 
the last inequality follows from \eqref{C2-sp}, \eqref{K-sp} and \eqref{complexity-sp}.
\end{proof}

\subsection{Proof of the main results in Section \ref{alg-constr}}\label{sec:proof4}

In this subsection we first establish several technical lemmas and then use them to prove Theorem~\ref{thm:constr}.


Let $\{(x^k,\lambda^k)\}_{k\in \bbK}$ denote all the iterates generated by Algorithm \ref{PPA}, where $\bbK$ is a subset of consecutive nonnegative integers starting from $0$. We define $\bbK-1 = \{k-1: k \in \bbK\}$. For any $0 \leq k \in \bbK-1$, let $f_k$ and $F_k$ be defined in \eqref{fk}. In addition, let $(x_*^k,\lambda_*^k)$ be defined as
\begin{align} \label{ppa-sub}
x^k_*=\argmin_x F_k(x), \qquad \lambda^k_* =\Pi_{\mcK^*}\left(\lambda^k+\rho_k g(x^k_*)\right).
\end{align}

Recall that $\alpha_0$, $\{\rho_k\}$ and $\{\eta_k\}$ are the input parameters of Algorithm~\ref{PPA}, $\mcQ$, $B$, $C$, $\chQ$, $\tB$ and $\wC$ are respectively given in \eqref{def-P}, \eqref{def2}, \eqref{def-Q} and \eqref{def3}, and $L_{\nabla g}$ and $\tL_{\nabla g}$ are the Lipschitz constant of $\nabla g$ on $\mcQ$ and $\chQ$, respectively.  Let
\begin{align}
L_k &=C\rho_k+B+L_{\nabla g}\sum_{i=0}^{k-1}\rho_i\eta_i+\rho_k^{-1}, \quad \tL_k=\wC\rho_k+\tB+\tL_{\nabla g}\sum_{i=0}^{k-1}\rho_i\eta_i+\rho_k^{-1},  \label{conic-L} \\
\br_k &=\sqrt{F_k(x^k)-F_k(x_*^k)+\frac{1}{2}\rho_k\alpha^2_0\|x^k-x_*^k\|^2}, \label{rk} \\
\cS_k&=\left\{x\in \dom( P):\|x-x_*^k\|\leq\alpha_0^{-1}\sqrt{2\rho_k^{-1}}\br_k\right\}, \label{def-Sk} \\
\chS_k &=\left\{x\in \dom( P):\|x-x^k_*\|\leq \left(1+L_k\rho_k^{-1}\right)\alpha_0^{-1}\sqrt{2\rho_k^{-1}}\br_k\right\}. \label{def-hSk} 
\end{align}

The following lemma states some properties of the function $f_k$, whose proof is similar to that of \cite[Lemma 7]{lu2018iteration} and thus omitted.

\begin{lemma}\label{AL-Lipschitz}
Let $f_k$, $\mcQ$, $\chQ$, $L_k$ and $\tL_k$ be respectively defined in \eqref{fk}, \eqref{def-P}, \eqref{def-Q} and \eqref{conic-L}. Then $f_k$ is convex and continuously differentiable on $\dom(P)$, and moreover, $\nabla f_k$ is Lipschitz continuous on $\mcQ$ and $\chQ$ with Lipschitz constants $L_k$ and $\tL_k$, respectively.
\end{lemma}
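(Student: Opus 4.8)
The plan is to first obtain an explicit formula for $\nabla f_k$, then read off continuous differentiability and convexity from that formula, and finally bound the Lipschitz modulus of $\nabla f_k$ term by term on $\mcQ$ and on $\chQ$.

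First I would recall the Moreau decomposition for the closed convex cone $-\mcK$, whose polar cone is $\mcK^*$: every $y\in\rr^m$ satisfies $y=\Pi_{-\mcK}(y)+\Pi_{\mcK^*}(y)$. Hence $y\mapsto\tfrac12\dist^2(y,-\mcK)$ is continuously differentiable with gradient $y-\Pi_{-\mcK}(y)=\Pi_{\mcK^*}(y)$. Combining this with the chain rule and \eqref{fk} gives, for every $x\in\dom(P)$,
\[
\nabla f_k(x)=\nabla f(x)+\nabla g(x)\,\Pi_{\mcK^*}\!\big(\lambda^k+\rho_k g(x)\big)+\rho_k^{-1}(x-x^k).
\]
Since $\nabla f$, $\nabla g$ and $g$ are continuous on $\dom(P)$ and $\Pi_{\mcK^*}$ is nonexpansive (hence continuous), $\nabla f_k$ is continuous, so $f_k$ is continuously differentiable on $\dom(P)$. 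For convexity, I would use that $x\mapsto\lambda^k+\rho_k g(x)$ is $\mcK$-convex (a positive affine transformation of the $\mcK$-convex map $g$), while $\phi(y):=\tfrac12\dist^2(y,-\mcK)$ is convex and nondecreasing with respect to the partial order induced by $\mcK$: for $k\in\mcK$ one has $-\mcK-k\subseteq-\mcK$ (because $\mcK+\mcK\subseteq\mcK$), whence $\dist(y+k,-\mcK)=\dist(y,-\mcK-k)\geq\dist(y,-\mcK)$. Since the composition of a convex $\mcK$-nondecreasing function with a $\mcK$-convex map is convex, $x\mapsto\tfrac{1}{2\rho_k}\dist^2(\lambda^k+\rho_k g(x),-\mcK)$ is convex, and $f_k$ is the sum of this term, the convex $f$, the convex quadratic $\tfrac{1}{2\rho_k}\|x-x^k\|^2$, and the constant $-\tfrac{1}{2\rho_k}\|\lambda^k\|^2$.

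For the Lipschitz bound, fix a bounded convex $\Omega\in\{\mcQ,\chQ\}$ and $x,x'\in\Omega$, and write $u=\lambda^k+\rho_k g(x)$, $u'=\lambda^k+\rho_k g(x')$. From the gradient formula,
\[
\|\nabla f_k(x)-\nabla f_k(x')\|\leq\|\nabla f(x)-\nabla f(x')\|+\rho_k^{-1}\|x-x'\|+\big\|\nabla g(x)\Pi_{\mcK^*}(u)-\nabla g(x')\Pi_{\mcK^*}(u')\big\|.
\]
The first two terms are at most $(L_{\nabla f}+\rho_k^{-1})\|x-x'\|$ on $\mcQ$ (resp.\ $(\tL_{\nabla f}+\rho_k^{-1})\|x-x'\|$ on $\chQ$). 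For the last term I would add and subtract $\nabla g(x)\Pi_{\mcK^*}(u')$ and use nonexpansiveness of $\Pi_{\mcK^*}$, $\|\nabla g(\cdot)\|\leq L_g$ and $\|g(x)-g(x')\|\leq L_g\|x-x'\|$ on $\mcQ$, to bound it by $\big(L_{\nabla g}\sup_{\xi\in\mcQ}\|\Pi_{\mcK^*}(\lambda^k+\rho_k g(\xi))\|+\rho_k L_g^2\big)\|x-x'\|$; moreover $\|\Pi_{\mcK^*}(\lambda^k+\rho_k g(\xi))\|\leq\|\lambda^k\|+\rho_k\|g(\xi)\|$ since $\Pi_{\mcK^*}(0)=0$. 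Finally I would invoke the bound $\|\lambda^k\|\leq\|\lambda^*\|+\sqrt2\,r_0+\sum_{i=0}^{k-1}\rho_i\eta_i$, obtained from the inexact proximal-point interpretation of Algorithm~\ref{PPA} by an argument analogous to Lemma~\ref{AL-bound-sp}. Substituting this together with the definitions of $B$, $C$ in \eqref{def2} (resp.\ $\tB$, $\tC$ in \eqref{def3}) and collecting the three estimates reproduces exactly $L_k=C\rho_k+B+L_{\nabla g}\sum_{i<k}\rho_i\eta_i+\rho_k^{-1}$ on $\mcQ$ and $\tL_k$ on $\chQ$, as in \eqref{conic-L}.

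The main obstacle will be the constant bookkeeping in this last step — in particular establishing the stated bound on $\|\lambda^k\|$ with the correct dependence on $r_0$ and $\sum_{i<k}\rho_i\eta_i$, which requires identifying the maximal monotone operator attached to the saddle-point formulation of \eqref{conic-p}--\eqref{conic-d} and applying Rockafellar's inexact-PPA estimates, and then reconciling everything with the somewhat intricate expressions \eqref{def2}, \eqref{def3} and \eqref{conic-L}. The remaining ingredients (Moreau's identity, the composition rules for $\mcK$-convex functions, and the product rule) are routine.
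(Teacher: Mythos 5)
Your proposal is correct and is essentially the argument that the paper delegates to the literature (the paper omits the proof, citing \cite[Lemma 7]{lu2018iteration}): Moreau's identity gives $\nabla f_k(x)=\nabla f(x)+\nabla g(x)\Pi_{\mcK^*}(\lambda^k+\rho_k g(x))+\rho_k^{-1}(x-x^k)$, the $\mcK$-monotone-composition rule gives convexity, and your term-by-term estimate reproduces \eqref{conic-L} exactly. The only ingredient you flag as an obstacle, the bound $\|\lambda^k\|\leq\|\lambda^*\|+r_0+\sum_{i=0}^{k-1}\rho_i\eta_i$, is precisely \eqref{ppa-bnd3} of Lemma~\ref{AL-bound}, whose inexact-PPA proof is independent of the present lemma, so there is no circularity and your constant bookkeeping closes.
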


The next lemma establishes some properties of $(x^k,\lambda^k)$ and $(x^k_*,\lambda^k_*)$.

\begin{lemma} \label{AL-bound}
Let $(x^k_*,\lambda^k_*)$ be defined in \eqref{ppa-sub}. Then the following statements hold.
\begin{align}
&\|(x^k,\lambda^k)-(x_*^k,\lambda_*^k)\|^2+\|(x_*^k,\lambda_*^k)-(x^*,\lambda^*)\|^2\leq\|(x^k,\lambda^k)-(x^*,\lambda^*)\|^2 \quad \forall 0 \leq k\in \bbK-1, \label{ppa-bnd1} \\
& \|(x^k,\lambda^k)-(x^{k-1},\lambda^{k-1})\| \leq\|(x^0,\lambda^0)-(x^*,\lambda^*)\|+\sum_{i=0}^{k-1}\rho_i\eta_i \quad \forall 1 \leq k\in \bbK, \label{ppa-bnd2} \\
&\|(x^k,\lambda^k)-(x^*,\lambda^*)\|\leq\|(x^0,\lambda^0)-(x^*,\lambda^*)\|+\sum_{i=0}^{k-1}\rho_i\eta_i \quad \forall 1 \leq k\in \bbK. \label{ppa-bnd3} 
\end{align}
\end{lemma}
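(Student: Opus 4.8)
The plan is to recognize Algorithm~\ref{PPA} as an inexact proximal point algorithm (PPA) \cite{Rock76a} applied to the monotone inclusion $0\in\mcT(x,\lambda)$, where $\mcT:\rr^n\times\rr^m\rightrightarrows\rr^n\times\rr^m$ is the KKT operator
\[
\mcT(x,\lambda)=\begin{pmatrix}\nabla f(x)+\partial P(x)+\nabla g(x)\lambda\\[2pt] -g(x)+\mcN_{\mcK^*}(\lambda)\end{pmatrix}.
\]
This $\mcT$ is the saddle subdifferential of the closed proper function $(x,\lambda)\mapsto f(x)+P(x)+\langle\lambda,g(x)\rangle$ on $\dom(P)\times\mcK^*$, which is convex in $x$ for each fixed $\lambda\in\mcK^*$ by the $\mcK$-convexity of $g$ and affine in $\lambda$; hence $\mcT$ is maximal monotone, the resolvent $(I+c\mcT)^{-1}$ is everywhere single-valued for every $c>0$, and by Assumption~\ref{assump-conic}(b) together with the KKT characterization recalled just after it, the fixed pair $(x^*,\lambda^*)$ satisfies $0\in\mcT(x^*,\lambda^*)$. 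The lemma will then follow from two facts: (I) $(x^k_*,\lambda^k_*)=(I+\rho_k\mcT)^{-1}(x^k,\lambda^k)$ for all $0\le k\in\bbK-1$; and (II) $\|(x^{k+1},\lambda^{k+1})-(x^k_*,\lambda^k_*)\|\le\rho_k\eta_k$ for all $0\le k\in\bbK-1$.

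To prove (I) and (II) I would first compute the gradient of the smooth part of $F_k$. Using the Moreau decomposition $\dist^2(z,-\mcK)=\|\Pi_{\mcK^*}(z)\|^2$ and the identity $\nabla_z\bigl(\tfrac12\dist^2(z,-\mcK)\bigr)=\Pi_{\mcK^*}(z)$, the chain rule applied to \eqref{fk} gives
\[
\nabla f_k(x)=\nabla f(x)+\nabla g(x)\,\Pi_{\mcK^*}\bigl(\lambda^k+\rho_k g(x)\bigr)+\rho_k^{-1}(x-x^k).
\]
For (I): since $x^k_*$ minimizes $F_k$, we have $0\in\nabla f_k(x^k_*)+\partial P(x^k_*)$; substituting $\lambda^k_*=\Pi_{\mcK^*}(\lambda^k+\rho_k g(x^k_*))$ and the projection characterization ``$w=\Pi_{\mcK^*}(v)$ iff $v-w\in\mcN_{\mcK^*}(w)$'' (and that $\mcN_{\mcK^*}(\lambda^k_*)$ is a cone, so the factor $\rho_k^{-1}$ can be absorbed) yields exactly $0\in\mcT(x^k_*,\lambda^k_*)+\rho_k^{-1}\bigl((x^k_*,\lambda^k_*)-(x^k,\lambda^k)\bigr)$, i.e. $(x^k,\lambda^k)\in(I+\rho_k\mcT)(x^k_*,\lambda^k_*)$, which is (I). For (II): by \eqref{subprob-term} there is $e^{k+1}$ with $\|e^{k+1}\|\le\eta_k$ and $e^{k+1}\in\partial F_k(x^{k+1})=\nabla f_k(x^{k+1})+\partial P(x^{k+1})$; substituting $\lambda^{k+1}=\Pi_{\mcK^*}(\lambda^k+\rho_k g(x^{k+1}))$ and the same projection characterization gives $(e^{k+1},0)\in\mcT(x^{k+1},\lambda^{k+1})+\rho_k^{-1}\bigl((x^{k+1},\lambda^{k+1})-(x^k,\lambda^k)\bigr)$, hence
\[
\dist\bigl(0,\ \mcT(x^{k+1},\lambda^{k+1})+\rho_k^{-1}((x^{k+1},\lambda^{k+1})-(x^k,\lambda^k))\bigr)\le\eta_k ;
\]
(II) then follows from \cite[Proposition~3]{Rock76a} applied with the resolvent $(I+\rho_k\mcT)^{-1}$, exactly as \eqref{inexact-ppa-sp} was obtained in the proof of Lemma~\ref{AL-bound-sp}.

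Once (I) and (II) are in place, the three displayed inequalities follow precisely as in the proof of Lemma~\ref{AL-bound-sp}: \eqref{ppa-bnd1} is \cite[Proposition~1]{Rock76a} with $w=(x^k,\lambda^k)$, $z=(x^*,\lambda^*)$ and $(I+\rho_k\mcT)^{-1}(x^k,\lambda^k)=(x^k_*,\lambda^k_*)$, while \eqref{ppa-bnd2} and \eqref{ppa-bnd3} follow from the inexactness bound (II), together with $0\in\mcT(x^*,\lambda^*)$, by invoking \cite[Lemma~3]{lu2018iteration} with $z=(x^*,\lambda^*)$ and error sequence $\{\rho_i\eta_i\}$. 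I expect the main obstacle to be the verification of (I), i.e. checking that the exact proximal augmented Lagrangian step together with the classical multiplier update reproduces the resolvent $(I+\rho_k\mcT)^{-1}$ of the KKT operator; this rests on the gradient identity for the augmented term displayed above and, crucially, on $\mcK$-convexity of $g$ being exactly what makes $\mcT$ maximal monotone so that Rockafellar's PPA estimates apply. Everything else is routine bookkeeping identical to the unconstrained case already carried out for Lemma~\ref{AL-bound-sp}.
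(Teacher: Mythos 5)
Your proposal is correct and follows essentially the same route as the paper: both identify Algorithm~\ref{PPA} as an inexact proximal point algorithm applied to the maximal monotone saddle-point (KKT) operator of the Lagrangian, and then invoke \cite[Proposition 1]{Rock76a} for \eqref{ppa-bnd1} and the inexactness bound together with \cite[Lemma 3]{lu2018iteration} for \eqref{ppa-bnd2}--\eqref{ppa-bnd3}. The only difference is that you verify the resolvent identity $(x^k_*,\lambda^k_*)=(\mcI+\rho_k\mcT_l)^{-1}(x^k,\lambda^k)$ and the $\rho_k\eta_k$ error bound directly via the gradient of the augmented term and the projection characterization, whereas the paper delegates exactly these two facts to \cite[Lemma 5]{lu2018iteration}; your computations there are correct.
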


\begin{proof}
It is well-known (e.g., see \cite{Rock76a,lu2018iteration}) that Algorithm \ref{PPA} is an inexact proximal point algorithm (PPA) applied to the monotone inclusion problem $0\in \mcT_l(x,\lambda)$, where $l$ is the Lagrangian function of problem~\eqref{conic-p}, and $\mcT_l$ is a maximal monotone set-valued operator defined as
\[
\mcT_l: (x,\lambda) \rightarrow \{ (v,u)\in\Re^n\times\Re^m: (v,-u)\in\partial l(x,\lambda)\}, \quad \forall (x,\lambda) \in \Re^n \times \Re^m. 
\]
It then follows from \eqref{subprob-term}, \eqref{ppa-sub}, and \cite[Lemma 5]{lu2018iteration} that  
\beq \label{PPA-step}
(x_*^k,\lambda_*^k)=\mcJ_{\rho_k}(x^k,\lambda^k), \qquad
\|(x^{k+1},\lambda^{k+1}) - \mcJ_{\rho_k}(x^k,\lambda^k)\| \leq \rho_k\eta_k, \quad \forall k\in\bbK-1.
\eeq
where $\mcJ_{\rho_k}=(\mcI + \rho_k\mcT_l)^{-1}$. By the first relation in \eqref{PPA-step}, 
$0\in\mcT_l(x^*,\lambda^*)$, and the maximal monotonicity of $\mcT_l$, it follows from \cite[Proposition 1]{Rock76a} that \eqref{ppa-bnd1} holds. In addition, \eqref{ppa-bnd2} and 
\eqref{ppa-bnd3} follow from the second relation in \eqref{PPA-step} and \cite[Lemma 3]{lu2018iteration}.
\end{proof}

As a consequence of Lemma~\ref{AL-bound} and the definition of $r_0$ and $\theta$ in \eqref{def1}, one has that 
\begin{equation}\label{AL-ineq}
\|x^0-x_*^0\|\leq r_0,\;\; \|x^k-x^*\|\leq r_0+\theta,\;\;\|\lambda^k-\lambda^*\|\leq r_0+\theta,\;\;\|x^k-x_*^k\|\leq r_0+\theta, \;\;\|x^k-x^{k-1}\|\leq r_0+\theta \quad \forall 1 \leq k\in \bbK.
\end{equation}

\begin{lemma}
Let $\tr0$ and $\br^k$ be defined in \eqref{def-tP} and \eqref{rk}.  Then for all $0 \le k\in\bbK-1$, we have
\begin{equation}\label{l11-potential}
\br_k^2 \leq\alpha_0^2\tr0^2\rho_k/2.
\end{equation}
\end{lemma}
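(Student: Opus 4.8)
The plan is to follow the same two‑case split as in the analogous unconstrained lemma (the one preceding \eqref{l11-potential-sp} in Subsection~\ref{alg-unconstr3}): the case $k=0$ will be matched against the first entry of the maximum defining $\tr0$ in \eqref{def-tP}, and the case $k\geq 1$ against the second entry. Throughout I would use $\br_k=\sqrt{F_k(x^k)-F_k(x_*^k)+\tfrac12\rho_k\alpha_0^2\|x^k-x_*^k\|^2}\geq 0$ (the radicand is nonnegative since $x^k_*$ minimizes $F_k$), the AL identity $F_k(x)=\mcL(x,\lambda^k;\rho_k)+\tfrac{1}{2\rho_k}\|x-x^k\|^2$ from \eqref{fk}--\eqref{conic-AL}, the Moreau fact $\dist^2(z,-\mcK)=\|\Pi_{\mcK^*}(z)\|^2$, and the dual representation $\mcL(x,\lambda;\rho)=\sup_{\mu\in\mcK^*}\{f(x)+P(x)+\langle\mu,g(x)\rangle-\tfrac{1}{2\rho}\|\mu-\lambda\|^2\}$, together with the a priori bounds \eqref{AL-ineq} and the contraction estimates of Lemma~\ref{AL-bound}.

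For $k=0$, since the proximal term vanishes at $x^0$ we have $F_0(x^0)=F(x^0)+\tfrac{1}{2\rho_0}(\|\Pi_{\mcK^*}(\lambda^0+\rho_0g(x^0))\|^2-\|\lambda^0\|^2)$. For a lower bound on $F_0(x^0_*)=\mcL(x^0_*,\lambda^0;\rho_0)+\tfrac{1}{2\rho_0}\|x^0_*-x^0\|^2$, I would evaluate the dual representation at $\mu=\lambda^*$, take the infimum over $x$, and invoke strong duality (Assumption~\ref{assump-conic}(b)) to get $\inf_x\mcL(x,\lambda^0;\rho_0)\geq F(x^*)-\tfrac{1}{2\rho_0}\|\lambda^*-\lambda^0\|^2$, hence $F_0(x^0_*)\geq F(x^*)-\tfrac{1}{2\rho_0}\|\lambda^*-\lambda^0\|^2$. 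Dropping the nonnegative proximal term, using $\|x^0-x^0_*\|\leq r_0$ from \eqref{AL-ineq}, and multiplying through by $2/(\alpha_0^2\rho_0)$ then yields exactly the first entry of the maximum in \eqref{def-tP}. For $k\geq 1$, I would use \eqref{subprob-term} at the previous outer iteration to obtain $P'(x^k)\in\partial P(x^k)$ with $\|\nabla f_{k-1}(x^k)+P'(x^k)\|\leq\eta_{k-1}$, write the gradients explicitly ($\nabla f_{k-1}(x^k)=\nabla f(x^k)+\nabla g(x^k)\lambda^k+\rho_{k-1}^{-1}(x^k-x^{k-1})$ using Step~3 of Algorithm~\ref{PPA}, and $\nabla f_k(x^k)=\nabla f(x^k)+\nabla g(x^k)\Pi_{\mcK^*}(\lambda^k+\rho_kg(x^k))$), and thereby exhibit
\[
G_k:=\nabla f_k(x^k)+P'(x^k)\in\partial F_k(x^k),\qquad \|G_k\|\leq \eta_{k-1}+\rho_{k-1}^{-1}\|x^k-x^{k-1}\|+\|\nabla g(x^k)\|\,\|\Pi_{\mcK^*}(\lambda^k+\rho_kg(x^k))-\lambda^k\|.
\]
The first two terms are bounded by $\eta_0$ and $\rho_0^{-1}(r_0+\theta)$ via \eqref{AL-ineq} and monotonicity of $\{\rho_k\},\{\eta_k\}$; for the last, I would use $\|\nabla g(x^k)\|\leq\widetilde L_g$ (as $x^k\in\ctQ$), nonexpansiveness of $\Pi_{\mcK^*}$, the KKT identity $\lambda^*=\Pi_{\mcK^*}(\lambda^*+\rho_kg(x^*))$, $\|\lambda^k-\lambda^*\|\leq r_0+\theta$, Lipschitz continuity of $g$ on $\ctQ$, and $\|\Pi_{\mcK^*}(\lambda^k+\rho_k g(x^k_*))\|\le\|\lambda^*\|+r_0+\theta$ from Lemma~\ref{AL-bound} (comparing through $x^k_*$ if a sharper split is needed). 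Then convexity of $F_k$ gives $F_k(x^k)-F_k(x^k_*)\leq\langle G_k,x^k-x^k_*\rangle\leq\|G_k\|(r_0+\theta)$; adding $\tfrac12\rho_k\alpha_0^2\|x^k-x^k_*\|^2\leq\tfrac12\rho_k\alpha_0^2(r_0+\theta)^2$, multiplying by $2/(\alpha_0^2\rho_k)$, and using $\rho_k\geq\rho_0$ and $\rho_k-\rho_{k-1}=(\zeta-1)\rho_{k-1}$ to absorb the $\rho_k$‑dependent pieces recovers the second entry of the maximum in \eqref{def-tP}.

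The main obstacle is the $k\geq 1$ estimate of the multiplier term $\nabla g(x^k)[\Pi_{\mcK^*}(\lambda^k+\rho_kg(x^k))-\lambda^k]$: it must be bounded tightly enough that, after dividing by $\rho_k$ (which is admissible precisely because $\br_k^2$ is being compared to $\alpha_0^2\tr0^2\rho_k/2$, i.e. it is allowed to scale with $\rho_k$), every constant lands on the explicit form in \eqref{def-tP}, in particular on the factor $2\widetilde L_g(\zeta+1)(\|\lambda^*\|+r_0+\theta)$. Arranging the cross terms and the $\zeta$‑dependence (stemming from $\rho_k/\rho_{k-1}=\zeta$) so that they match \eqref{def-tP} exactly is the delicate bookkeeping; the remaining ingredients are routine applications of Lemma~\ref{AL-bound}, nonexpansiveness of projections, and the Lipschitz estimates in Lemma~\ref{AL-Lipschitz}. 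Once \eqref{l11-potential} is in place it plays, for Algorithm~\ref{PPA}, the role that \eqref{l11-potential-sp} plays for Algorithm~\ref{PPA-sp}.
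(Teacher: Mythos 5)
Your overall architecture matches the paper's: the $k=0$ case via the dual (max) representation of the augmented Lagrangian, evaluation at $\lambda^*$, and strong duality to get $F_0(x^0_*)\ge F(x^*)-\tfrac{1}{2\rho_0}\|\lambda^0-\lambda^*\|^2$; the $k\ge1$ case via the $\eta_{k-1}$-approximate stationarity of $x^k$ for $F_{k-1}$, transferred to a subgradient of $F_k$ at $x^k$ of the form $F'_{k-1}(x^k)-\rho_{k-1}^{-1}(x^k-x^{k-1})+\nabla g(x^k)\bigl(\Pi_{\mcK^*}(\lambda^k+\rho_k g(x^k))-\lambda^k\bigr)$, followed by convexity of $F_k$. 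The $k=0$ half is complete and correct.

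The gap is exactly where you locate it, and your proposed tools cannot close it. To land on the term $2\widetilde L_g(\zeta+1)(\|\lambda^*\|+r_0+\theta)$ in \eqref{def-tP} you need a bound on $\|\Pi_{\mcK^*}(\lambda^k+\rho_k g(x^k))-\lambda^k\|$ that is \emph{independent of $\rho_k$} and does not itself involve a Lipschitz constant of $g$. Every route you list (nonexpansiveness of $\Pi_{\mcK^*}$ combined with the KKT identity $\lambda^*=\Pi_{\mcK^*}(\lambda^*+\rho_k g(x^*))$, or comparison through $\lambda^k_*=\Pi_{\mcK^*}(\lambda^k+\rho_k g(x^k_*))$) incurs a term $\rho_k\|g(x^k)-g(\cdot)\|\le\rho_k\widetilde L_g(r_0+\theta)$; after multiplying by $2/(\rho_k\alpha_0^2)$ and by the extra $\|\nabla g(x^k)\|\le\widetilde L_g$ this leaves a contribution of order $\widetilde L_g^2(r_0+\theta)^2$, which is quadratic in $\widetilde L_g$ and simply not present in \eqref{def-tP}; you would prove a bound with a different constant, not the stated inequality. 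The paper's mechanism (see \eqref{l11-pi}) is different: it writes $\|\Pi_{\mcK^*}(\lambda^k+\rho_k g(x^k))-\lambda^k\|\le\dist(\lambda^k+\rho_k g(x^k),-\mcK)+\|\lambda^k\|$, uses positive homogeneity of $\dist(\cdot,-\mcK)$ to compare $\lambda^k/\rho_k+g(x^k)$ with $\lambda^{k-1}/\rho_{k-1}+g(x^k)$ \emph{at the same point $x^k$} (so $g$ cancels and no Lipschitz constant enters), and then exploits the exact identity $\dist(\lambda^{k-1}+\rho_{k-1}g(x^k),-\mcK)=\|\lambda^k\|$ coming from Step~3 of Algorithm~\ref{PPA}. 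This yields $\|\Pi_{\mcK^*}(\lambda^k+\rho_k g(x^k))-\lambda^k\|\le(\zeta+2)\|\lambda^k\|+\zeta\|\lambda^{k-1}\|\le2(\zeta+1)(\|\lambda^*\|+r_0+\theta)$, which is the constant in \eqref{def-tP}. You flag this step as ``delicate bookkeeping,'' but it is not bookkeeping: it requires replacing your nonexpansiveness-plus-Lipschitz comparison by this homogeneity argument, which is the one genuinely new idea in the proof.
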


\begin{proof}

We first prove that \eqref{l11-potential} holds for $k=0$, that is, $\br_0^2 \leq\alpha_0^2\tr0^2\rho_0/2$. Indeed, let $l$ be the Lagrangian function of problem~\eqref{conic-p}. By \eqref{conic-AL}, \eqref{fk} and \eqref{ppa-sub}, one has 
\begin{align*}
F_0(x^0_*) &=\mcL(x^0_*,\lambda^0;\rho_0)+\frac{1}{2\rho_0} \|x^0_*-x^0\|^2  \ge \mcL(x^0_*,\lambda^0;\rho_0) = \max_{\lambda\in\rr^m} \left\{ l(x^0_*,\lambda) - \frac{1}{2\rho_0}\|\lambda - \lambda^0\|^2\right\} \\
& \geq l(x^*_0,\lambda^*) - \frac{1}{2\rho_0}\|\lambda^0 - \lambda^*\|^2 \geq F(x^*)- \frac{1}{2\rho_0}\|\lambda^0 - \lambda^*\|^2,
\end{align*}
where the second equality follows from \cite[Lemma 2]{lu2018iteration}. Also, we have
\[
F_0(x^0) =\mcL(x^0,\lambda^0;\rho_0) = F(x^0)+\frac{1}{2\rho_0}\left(\|\Pi_{\mcK^*}(\lambda^0+\rho_0 g(x^0))\|^2-\|\lambda^0\|^2\right).
\] 
It then follows from these, \eqref{def-tP}, \eqref{rk}, and \eqref{AL-ineq} that
\begin{align*}
\br_0^2 &\overset{\eqref{rk}}{=} F_0(x^0)-F_0(x_*^0)+\frac{1}{2}\rho_0\alpha^2_0\|x^0-x_*^0\|^2 \\
& \leq F(x^0)-F(x^*)+\frac{1}{2\rho_0}\left(\|\Pi_{\mcK^*}(\lambda^0+\rho_0 g(x^0))\|^2+\|\lambda^0 - \lambda^*\|^2-\|\lambda^0\|^2\right) +\frac{1}{2}\rho_0\alpha^2_0r_0^2 \overset{\eqref{def-tP}}{\leq}\alpha_0^2\tr0^2\rho_0/2.
\end{align*}


We next show that \eqref{l11-potential} holds for all $1 \leq k\in\bbK-1$. Indeed, 
observe that $\|\lambda^{k}\|=\dist(\lambda^{k-1}+\rho_{k-1} g(x^{k}),-\mcK)$
 and $\|\Pi_{\mcK^*}(\lambda^k+\rho_k g(x^k))\|=\dist(\lambda^k+\rho_k g(x^k),-\mcK)$.
 Using these, $\rho_k=\rho_0\zeta^k$, and \eqref{AL-ineq}, we have
\begin{align}
\|\Pi_{\mcK^*}(\lambda^k+\rho_k g(x^k))-\lambda^k\|\leq& \ \dist(\lambda^k+\rho_k g(x^k),-\mcK)+\|\lambda^k\|=\rho_k\dist\left(\frac{\lambda^k}{\rho_k}+g(x^k),-\mcK\right)+\|\lambda^k\|\notag\\
\leq& \ \rho_k\dist\left(\frac{\lambda^k}{\rho_k}-\frac{\lambda^{k-1}}{\rho_{k-1}},-\mcK\right)+\rho_k\dist\left(\frac{\lambda^{k-1}}{\rho_{k-1}}+g(x^k),-\mcK\right)+\|\lambda^k\|\notag\\
\leq& \ \rho_k\left\|\frac{\lambda^k}{\rho_k}-\frac{\lambda^{k-1}}{\rho_{k-1}}\right\|+\frac{\rho_k}{\rho_{k-1}}\dist\left(\lambda^{k-1}+\rho_{k-1}g(x^k),-\mcK\right)+\|\lambda^k\|\notag\\
=& \ \rho_k\left\|\frac{\lambda^k}{\rho_k}-\frac{\lambda^{k-1}}{\rho_{k-1}}\right\|+\left(\frac{\rho_k}{\rho_{k-1}}+1\right)\|\lambda^k\| \leq \frac{\rho_k}{\rho_{k-1}}\|\lambda^{k-1}\|+\left(\frac{\rho_k}{\rho_{k-1}}+2\right)\|\lambda^k\|\notag\\
\leq& \ 2(\zeta+1)(\|\lambda^*\|+r_0+\theta)\label{l11-pi}.
\end{align}

It follows from \eqref{fk} and \eqref{subprob-term} that there exists $P'(x^k)\in\partial P(x^k)$ such that 
\begin{equation} \label{subg-F1}
F'_{k-1}(x^k)=\nabla f(x^k)+\nabla g(x^k)\Pi_{\mcK^*}(\lambda^{k-1}+\rho_{k-1} g(x^k))+\rho_{k-1}^{-1}(x^k-x^{k-1})+P'(x^k) \in \partial F_{k-1}(x^k), \quad \|F'_{k-1}(x^k)\|\leq\eta_{k-1}.
\end{equation}
Also, we have
\begin{equation*}
\nabla f(x^k)+\nabla g(x^k)\Pi_{\mcK^*}(\lambda^{k}+\rho_{k} g(x^k))+P'(x^k)\in\partial F_k(x^k),
\end{equation*}
which together with \eqref{subg-F1} yields
\beq \label{subg-F2}
F'_{k-1}(x^k)-\rho_{k-1}^{-1}(x^k-x^{k-1}) +\nabla g(x^k)\left(\Pi_{\mcK^*}(\lambda^{k}+\rho_{k} g(x^k))-\Pi_{\mcK^*}(\lambda^{k-1}+\rho_{k-1} g(x^k))\right)\in \partial F_k(x^k).
\eeq
In addition, observe from \eqref{def1} and \eqref{AL-ineq} that $x^k\in\ctQ$. Also, note that $F_k$ is convex and $g$ is $\widetilde L_g$-Lipschitz continuous on $\ctQ$. By these, \eqref{l11-pi}, \eqref{subg-F1}, \eqref{subg-F2}, and the monotonicity of $\{\rho_k\}$ and $\{\eta_k\}$, one has
\begin{align*}
F_k(x^k)-F_k(x_*^k)\overset{\eqref{subg-F2}}{\leq}& \ \langle F'_{k-1}(x^k),x^k-x_*^k\rangle-\rho_{k-1}^{-1}\langle x^k-x^{k-1},x^k-x_*^k\rangle\\
&+\langle\nabla g(x^k)(\Pi_{\mcK^*}(\lambda^k+\rho_k g(x^k))-\Pi_{\mcK^*}(\lambda^{k-1}+\rho_{k-1}g(x^k))),x^k-x_*^k\rangle\\
\leq&\ \| F'_{k-1}(x^k)\|\|x^k-x_*^k\|+\rho_{k-1}^{-1}\|x^k-x^{k-1}\|\|x^k-x_*^k\|\\
&+\|\nabla g(x^k)\|\|\Pi_{\mcK^*}(\lambda^k+\rho_k g(x^k))-\Pi_{\mcK^*}(\lambda^{k-1}+\rho_{k-1}g(x^k))\|\|x^k-x^k_*\|\\
= & \ \| F'_{k-1}(x^k)\|\|x^k-x_*^k\|+\rho_{k-1}^{-1}\|x^k-x^{k-1}\|\|x^k-x_*^k\|\\
&+\|\nabla g(x^k)\| \|\Pi_{\mcK^*}(\lambda^k+\rho_k g(x^k))-\lambda^k\|\|x^k-x^k_*\|\\
\leq& \ \eta_0(r_0+\theta)+\rho_0^{-1}(r_0+\theta)^2+2\widetilde L_g(\zeta+1)(\|\lambda^*\|+r_0+\theta)(r_0+\theta),
\end{align*}
where the last inequality follows from \eqref{AL-ineq} and \eqref{l11-pi}. Then we have
\begin{align*}
&\frac{2\br_k^2}{\rho_k\alpha_0^2}=\frac{2}{\rho_k\alpha_0^2}\left(F_k(x^k)-F_k(x_*^k)+\rho_k\alpha_0^{2}\|x^k-x_*^k\|^2\right) \leq \frac{2}{\rho_0\alpha_0^2}\left(F_k(x^k)-F_k(x_*^k)\right)+2\|x^k-x_*^k\|^2\notag\\
\leq&\frac{2}{\rho_0\alpha_0^2}\left(\eta_0(r_0+\theta)+\rho_0^{-1}(r_0+\theta)^2+2\widetilde L_g(\zeta+1)(\|\lambda^*\|+r_0+\theta)(r_0+\theta)\right)+2(r_0+\theta)^2\notag\\
=&\frac{2(r_0+\theta)}{\rho_0\alpha_0^2}\left(\eta_0+\rho_0^{-1}(r_0+\theta)+2\widetilde L_g(\zeta+1)(\|\lambda^*\|+r_0+\theta)+\rho_0\alpha_0^2(r_0+\theta)\right). 
\end{align*}
By this relation and the definition of $\tr0$ in \eqref{def-tP}, one can see that \eqref{l11-potential} holds for all $1 \leq k\in\bbK-1$.
\end{proof}

\begin{lemma}\label{l11}
Let $f_k$,  $L_k$, $\tL_k$, $\cS_k$ and $\chS_k$ be respectively defined in \eqref{fk}, \eqref{conic-L}, \eqref{def-Sk} and \eqref{def-hSk}. Then for all $0 \leq k\in\bbK-1$, $\nabla f_k$ is Lipschitz continuous on $\cS_k$ and  $\chS_k$ with Lipschitz constants $L_k$ and $\tL_k$, respectively.
\end{lemma}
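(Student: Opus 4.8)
The plan is to follow the same route as the proof of Lemma~\ref{l11-sp}: reduce the claim to the two set inclusions $\cS_k\subseteq\mcQ$ and $\chS_k\subseteq\chQ$ for every $0\le k\in\bbK-1$, and then invoke Lemma~\ref{AL-Lipschitz}, which already guarantees that $\nabla f_k$ is $L_k$-Lipschitz on $\mcQ$ and $\tL_k$-Lipschitz on $\chQ$. Thus no new Lipschitz estimate on $f_k$ itself is needed; only the geometry of the sets has to be checked.

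First I would fix $0\le k\in\bbK-1$ and record the basic radius estimate coming from \eqref{l11-potential}, namely $\alpha_0^{-1}\sqrt{2\rho_k^{-1}}\,\br_k\le\tr0$. For $x\in\cS_k$, the definition \eqref{def-Sk} then gives $\|x-x^k_*\|\le\tr0$, and combining this with $\|x^k_*-x^*\|\le r_0+\theta$ — which follows from \eqref{ppa-bnd1} together with \eqref{ppa-bnd3} — yields $\|x-x^*\|\le\tr0+r_0+\theta$, hence $x\in\mcQ$ by \eqref{def-P}. For $x\in\chS_k$, the definition \eqref{def-hSk} gives $\|x-x^k_*\|\le(1+L_k\rho_k^{-1})\tr0$, so here I need the bound $L_k\rho_k^{-1}\le L$; once this is established, the same triangle-inequality argument produces $\|x-x^*\|\le(1+L)\tr0+r_0+\theta$, hence $x\in\chQ$ by \eqref{def-Q}.

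The inequality $L_k\rho_k^{-1}\le L$ is the constrained counterpart of \eqref{ineq-L-sp} and is the one computation that needs care. Using the formula for $L_k$ in \eqref{conic-L}, one has $L_k\rho_k^{-1}=C+B\rho_k^{-1}+L_{\nabla g}\rho_k^{-1}\sum_{i=0}^{k-1}\rho_i\eta_i+\rho_k^{-2}$; then the monotonicity of $\{\rho_k\}$ (so that $\rho_k^{-1}\le\rho_0^{-1}$) together with $\sum_{i=0}^{k-1}\rho_i\eta_i\le\sum_{i=0}^\infty\rho_i\eta_i=\theta$ shows this is at most $C+B\rho_0^{-1}+L_{\nabla g}\rho_0^{-1}\theta+\rho_0^{-2}=L$, where the last equality is \eqref{def2}. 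With the inclusions $\cS_k\subseteq\mcQ$ and $\chS_k\subseteq\chQ$ in hand, Lemma~\ref{AL-Lipschitz} immediately gives the asserted Lipschitz continuity of $\nabla f_k$ with constants $L_k$ on $\cS_k$ and $\tL_k$ on $\chS_k$. The only mildly delicate point is the bookkeeping in that last chain of inequalities — keeping straight which monotonicity is used where ($\rho_k$ increasing, and the partial sums of $\rho_i\eta_i$ bounded by $\theta$) — but there is no genuine obstacle, and the argument is otherwise a verbatim adaptation of the proof of Lemma~\ref{l11-sp}.
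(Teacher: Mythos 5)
Your proposal is correct and follows essentially the same route as the paper's proof: both reduce the claim to the inclusions $\cS_k\subseteq\mcQ$ and $\chS_k\subseteq\chQ$ via the radius bound from \eqref{l11-potential}, the estimate $\|x_*^k-x^*\|\le r_0+\theta$, and the inequality $\rho_k^{-1}L_k\le L$ derived from \eqref{conic-L}, \eqref{def1} and \eqref{def2}, before invoking Lemma~\ref{AL-Lipschitz}. No gaps.
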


\begin{proof}
Let $\mcQ$ and $\chQ$ be defined in \eqref{def-P} and \eqref{def-Q}. We first show that 
$\cS_k\subseteq\mcQ$ and $\chS_k\subseteq\chQ$ for all $0\leq k\in\bbK-1$. To this end, fix any $0\leq k\in\bbK-1$. By \eqref{def-Sk}, \eqref{AL-ineq} and \eqref{l11-potential}, one has that for all $x\in\cS_k$, 
\begin{equation*}
\|x-x^*\|\leq\|x-x_*^k\|+\|x_*^k-x^*\|\overset{\eqref{def-Sk} }{\leq} \alpha_0^{-1}\sqrt{2\rho_k^{-1}}\br_k +\|x_*^k-x^*\| \leq \tr0+r_0+\theta,
\end{equation*}
where the last inequality follows from \eqref{AL-ineq} and \eqref{l11-potential}. This together with \eqref{def-P} implies that $\cS_k\subseteq\mcQ$. In addition, by  \eqref{def1},  \eqref{def2}, \eqref{conic-L} and $\rho_k \ge \rho_0$, one has
\[
\rho_k^{-1}L_k\overset{\eqref{conic-L} }{=}C+\rho_k^{-1}B+\rho_k^{-1}L_{\nabla g}\sum_{i=0}^{k-1}\rho_i\eta_i+\rho_k^{-2}\overset{\eqref{def1}}{\leq} C+\rho_0^{-1}B+\rho_0^{-1}L_{\nabla g}\theta+\rho_0^{-2}\overset{\eqref{def2}}{=}L.
\]
Using this, \eqref{def-hSk} and \eqref{l11-potential}, we obtain that for all $x\in\chS_k$, 
\begin{align*}
\|x-x^*\| \leq  \|x-x_*^k\|+\|x_*^k-x^*\|\overset{\eqref{def-hSk} }{\leq} \left(1+L_k\rho_k^{-1}\right)\alpha_0^{-1}\sqrt{2\rho_k^{-1}}\br_k + \|x_*^k-x^*\| \leq  (1+L)\tr0+r_0+\theta.
\end{align*}
which  along with \eqref{def-Q} implies that $\chS_k\subseteq\chQ$.

The conclusion of this lemma then follows from Lemma \ref{AL-Lipschitz} and the fact that 
$\cS_k\subseteq\mcQ$ and $\chS_k\subseteq\chQ$ for all $0\leq k\in\bbK-1$.
\end{proof}

\begin{lemma}\label{prop-sub}
Let $N_k$ denote the number of evaluations of $\nabla f$, $\nabla g$, proximal operator of $P$ and projection onto $\mcK^*$ performed by Algorithm~\ref{alg-acc-term} at the $k$th outer iteration of  Algorithm~\ref{PPA}. Then for all $0 \le k\in \bbK-1$, it holds that 
\begin{equation}\label{def-Nk}
N_k \leq \wC_1\left(M+1+\frac{\left(\log\frac{\rho_k^2\alpha_0^2\tr0^2\left(\sqrt{\max\{1,\tL\delta^{-1}\}}+\tL\right)^2}{\eta_k^2}\right)_+}{\sqrt{(\mu+\rho_k^{-1})\rho_k^{-1}\min\left\{1,\delta \tL^{-1}\right\}}}\right),
\end{equation}
where $M$, $\delta$, $\alpha_0$, $\{\rho_k\}$ and $\{\eta_k\}$ are the input parameters of Algorithm~\ref{PPA}, and $\tr0$, $\tL$ and $\wC_1$ are given in \eqref{def-tP}, \eqref{def3} and \eqref{C1}, respectively. 
\end{lemma}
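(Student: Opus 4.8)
The plan is to feed the complexity estimate of Theorem~\ref{res-complexity} through the call to Algorithm~\ref{alg-acc-term} made at the $k$th outer iteration of Algorithm~\ref{PPA}, and then collapse the resulting expression into \eqref{def-Nk} via a chain of monotone bounds. First I would record that at this outer iteration Algorithm~\ref{alg-acc-term} is invoked with $F\leftarrow F_k$, $f\leftarrow f_k$, $\epsilon\leftarrow\eta_k$, $\gamma_0\leftarrow\rho_k^{-1}$, $\mu\leftarrow\mu+\rho_k^{-1}$, $x^1=z^1\leftarrow x^k$ and the parameters $\alpha_0,\delta,M$ (admissibility of $(\alpha_0,\rho_k^{-1})$ for these $\mu$-values is exactly what the remark following Algorithm~\ref{PPA} verifies). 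In the language of Theorem~\ref{res-complexity} applied to the subproblem $\min_x F_k$, the optimal solution $x^*$ is $x^k_*$ of \eqref{ppa-sub}, the scalar $r_0$ is $\br_k$ of \eqref{rk}, and the sets $\cS,\chS$ of \eqref{def-S}--\eqref{def-hS} become $\cS_k,\chS_k$ of \eqref{def-Sk}--\eqref{def-hSk}: indeed \eqref{def-S} written for $f_k$ with $(x^*,r_0,\gamma_0)\leftarrow(x^k_*,\br_k,\rho_k^{-1})$ is precisely $\cS_k$, and then \eqref{def-hS} with $L_\cS\leftarrow L_k$ (the Lipschitz modulus of $\nabla f_k$ on $\cS_k$, by Lemma~\ref{l11}) is precisely $\chS_k$. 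Hence $L_\chS$ in \eqref{def-NN} becomes $\tL_k$, again by Lemma~\ref{l11}, and substituting $(r_0,\mu,\epsilon,L_\chS,\gamma_0)\leftarrow(\br_k,\mu+\rho_k^{-1},\eta_k,\tL_k,\rho_k^{-1})$ into \eqref{def-NN} gives an explicit upper bound on $N_k$.

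The second step is to simplify that bound. Using $-\log(1-\xi)\ge\xi$ on $(0,1)$ together with $\lceil t\rceil_+\le 1+t_+$, the middle ceiling factor of the substituted \eqref{def-NN} is at most
\[
1+\frac{\left(\log\frac{2\br_k^2\left(\sqrt{\max\{\rho_k,\tL_k\delta^{-1}\}}+\sqrt{\rho_k^{-1}}\,\tL_k\right)^2}{\eta_k^2}\right)_+}{\sqrt{(\mu+\rho_k^{-1})\min\{\rho_k^{-1},\delta\tL_k^{-1}\}}},
\]
while $(1+M^{-1})\left(1+\left\lceil\log(\rho_k^{-1}\tL_k)/\log(1/\delta)\right\rceil_+\right)\le C_1$ because $\rho_k^{-1}\tL_k\le\tL$ --- this last inequality being the hatted analogue of the estimate $\rho_k^{-1}L_k\le L$ already derived inside the proof of Lemma~\ref{l11} (replace $C,B,L_{\nabla g},L$ there by $\tC,\tB,\tL_{\nabla g},\tL$). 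Multiplying the two factors, $N_k$ is at most $C_1$ times $M+1$ plus the displayed fraction.

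The final step is to insert the crude bounds $2\br_k^2\le\alpha_0^2\tr0^2\rho_k$ from \eqref{l11-potential} and $\tL_k\le\rho_k\tL$ (again from the proof of Lemma~\ref{l11}). The latter yields $\sqrt{\max\{\rho_k,\tL_k\delta^{-1}\}}+\sqrt{\rho_k^{-1}}\,\tL_k\le\sqrt{\rho_k}\left(\sqrt{\max\{1,\tL\delta^{-1}\}}+\tL\right)$ and $\min\{\rho_k^{-1},\delta\tL_k^{-1}\}\ge\rho_k^{-1}\min\{1,\delta\tL^{-1}\}$; feeding these into the numerator (using monotonicity of $\log$ and of $(\cdot)_+$) and the denominator of the fraction above turns it into exactly the fraction in \eqref{def-Nk}, which completes the proof.

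The one place that needs care is the first step: one must check that the ``$\chS$-set'' that Theorem~\ref{res-complexity} attaches to $\min_x F_k$ really is $\chS_k$, so that Lemma~\ref{l11} legitimately identifies $L_\chS$ with $\tL_k$. Everything downstream is a routine cascade of monotone substitutions, parallel to the argument for Lemma~\ref{prop-sub-sp} in the unconstrained case.
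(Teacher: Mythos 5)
Your proposal is correct and follows essentially the same route as the paper: substitute $(\br_k,\rho_k^{-1},\mu+\rho_k^{-1},\eta_k,\tL_k)$ for $(r_0,\gamma_0,\mu,\epsilon,L_{\chS})$ in \eqref{def-NN} (justified by Lemma~\ref{l11} identifying $\cS_k,\chS_k$ as the relevant sets), then simplify via $-\log(1-\xi)\ge\xi$, $2\br_k^2\le\alpha_0^2\tr0^2\rho_k$ from \eqref{l11-potential}, and $\rho_k^{-1}\tL_k\le\tL$, which is exactly the paper's \eqref{AL-Lbound}. The only detail you leave implicit, which the paper states explicitly, is the bookkeeping observation that each proximal step of Algorithm~\ref{alg-acc-term} applied to $F_k$ costs one evaluation of each of $\nabla f$, $\nabla g$, the proximal operator of $P$, and the projection onto $\mcK^*$, so the count from Theorem~\ref{res-complexity} bounds all four quantities simultaneously.
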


\begin{proof}
By  \eqref{def1},  \eqref{def3}, \eqref{conic-L} and $\rho_k \ge \rho_0$, one has
\beq\label{AL-Lbound}
\rho_k^{-1}\tL_k\overset{\eqref{conic-L} }{=}\wC+\rho_k^{-1}\tB+\rho_k^{-1}\tL_{\nabla g}\sum_{i=0}^{k-1}\rho_i\eta_i+\rho_k^{-2}\overset{\eqref{def1}}{\leq} \wC+\rho_0^{-1}\tB+\rho_0^{-1}\tL_{\nabla g}\theta+\rho_0^{-2}\overset{\eqref{def3}}{=}\tL.
\eeq
Notice that at the $k$th outer iteration of Algorithm~\ref{PPA}, Algorithm~\ref{alg-acc-term} is called to find an $\eta_k$-residual solution $x^{k+1}$ of the problem $\min_x \left\{f_k(x)+P(x)\right\}$ with the inputs $\epsilon \leftarrow\eta_k$, $\gamma_0 \leftarrow \rho_k^{-1}$,   $\mu \leftarrow \mu+\rho_k^{-1}$ and $x^1=z^1 \leftarrow x^k$. Moreover, when applied to this problem, the proximal step \eqref{unc-prox} of Algorithm~\ref{alg-acc-term} requires one evaluation of $\nabla f$, $\nabla g$, proximal operator of $P$ and projection onto $\mcK^*$, respectively. In view of this, \eqref{rk}, \eqref{def-Sk}, \eqref{def-hSk}, Lemma \ref{l11} and Theorem \ref{res-complexity}, one can replace $(r_0,\gamma_0, \mu, \epsilon, L_\chS)$ in \eqref{def-NN} by $(\br_k, \rho_k^{-1}, \mu+\rho_k^{-1}, \eta_k, \tL_k)$ respectively and obtain that 
\begin{align*}
N_k &\leq (1+M^{-1})\left(M+\left\lceil\frac{2\log\frac{\eta_k}{\br_k\left(\sqrt{2\max\{\rho_k,\tL_k\delta^{-1}\}}+\sqrt{2\rho_k^{-1}}\tL_k\right)}}{\log\left(1-\sqrt{(\mu+\rho_k^{-1})\min\left\{\rho_k^{-1},\delta \tL_k^{-1}\right\}}\,\right)}\right\rceil_+\right) \left(1+\left\lceil\frac{\log(\rho_k^{-1}\tL_k)}{\log(1/\delta)}\right\rceil_+\right) \notag \\
&\leq (1+M^{-1})\left(M+1+\frac{\left(\log\frac{2\rho_k\br_k^2\left(\sqrt{\max\{1,\rho_k^{-1}\tL_k\delta^{-1}\}}+\rho_k^{-1}\tL_k\right)^2}{\eta_k^2}\right)_+}{-\log\left(1-\sqrt{(\mu+\rho_k^{-1})\rho_k^{-1}\min\left\{1,\delta \rho_k\tL_k^{-1}\right\}}\,\right)}\right) \left(1+\left\lceil\frac{\log(\rho_k^{-1}\tL_k)}{\log(1/\delta)}\right\rceil_+\right) \notag \\
&\leq (1+M^{-1})\left(M+1+\frac{\left(\log\frac{2\rho_k\br_k^2\left(\sqrt{\max\{1,\rho_k^{-1}\tL_k\delta^{-1}\}}+\rho_k^{-1}\tL_k\right)^2}{\eta_k^2}\right)_+}{\sqrt{(\mu+\rho_k^{-1})\rho_k^{-1}\min\left\{1,\delta \rho_k\tL_k^{-1}\right\}}}\right) \left(1+\left\lceil\frac{\log(\rho_k^{-1}\tL_k)}{\log(1/\delta)}\right\rceil_+\right),
\end{align*}
where the last inequality follows from the fact that $-\log(1-\xi) \ge \xi$ for any $\xi \in(0,1)$.   By  the above inequality, \eqref{l11-potential} and \eqref{AL-Lbound}, one can see that \eqref{def-Nk} holds.
\end{proof}

%
%

We are now ready to prove Theorem~\ref{thm:constr}.

\begin{proof}[\textbf{Proof of Theorem~\ref{thm:constr}}.]
(i) Let $K$ be defined in \eqref{K}. We first show that Algorithm~\ref{PPA} terminates after at most $K+1$ outer iterations. Indeed, suppose for contradiction that it runs for more than $K+1$ outer iterations. It then follows that 
\eqref{ppa-term} does not hold for $k=K$. On the other hand, by \eqref{def1}, \eqref{ppa-bnd2}, \eqref{K}, $\rho_K=\rho_0\zeta^K$ and  $\eta_K=\eta_0\sigma^K$, one has 
\[
\frac{1}{\rho_K}\|(x^{K+1},\lambda^{K+1})-(x^K,\lambda^K)\| \leq \frac{r_0+\theta}{\rho_0\zeta^K} \overset{\eqref{K}}{\leq} \frac{\varepsilon}{2}, \qquad \eta_K=\eta_0\sigma^K\overset{\eqref{K}}{\leq}\frac{\varepsilon}{2},
\]
and hence \eqref{ppa-term} holds for $k=K$, which leads to a contradiction. In addition, the output of Algorithm~\ref{PPA} is an $\varepsilon$-KKT solution of problems \eqref{conic-p} and \eqref{conic-d} due to \cite[Theorem 4]{lu2018iteration}.

(ii) Suppose that $\mu=0$, i.e., $f$ is convex but not strongly convex on $\dom(P)$. 
Let $K$ and $\widehat N$ be defined in \eqref{K} and \eqref{def-complexity}. Also, let $N_k$ denote the number of evaluations of $\nabla f$, $\nabla g$, proximal operator of $P$ and projection onto $\mcK^*$ performed by Algorithm~\ref{alg-acc-term} at the $k$th outer iteration of Algorithm~\ref{PPA}. In addition to these evaluations, one projection onto $\mcK^*$ is performed at step 3 of Algorithm~\ref{PPA} each iteration. By these and statement (i) of this theorem, one can observe that the total number of evaluations of $\nabla f$, $\nabla g$, proximal operator of $P$ and projection onto $\mcK^*$ performed in Algorithm~\ref{PPA} is no more than $\sum_{k=0}^{|\bbK|-2}(N_k+1)$. 
As a result, to prove statement (ii) of this theorem, it suffices to show that $\sum_{k=0}^{|\bbK|-2} (N_k+1) \le \widehat N$. Indeed, in view of \eqref{C2}, \eqref{K}, \eqref{def-Nk}, $|\bbK|-2\leq K$, $\mu=0$, $\rho_k=\rho_0\zeta^k$ and $\eta_k=\eta_0\sigma^k$, one has
\begin{align*}
\sum_{k=0}^{|\bbK|-2} (N_k+1) & \leq K+1+ \wC_1 \sum_{k=0}^{K} \left(M+1+\frac{\rho_k\left(\log\frac{\rho_k^2\alpha_0^2\tr0^2\left(\sqrt{\max\{1,\tL\delta^{-1}\}}+\tL\right)^2}{\eta_k^2}\right)_+}{\min\left\{1,\sqrt{\delta \tL^{-1}}\right\}}\right) \\
& = K+1+\wC_1\sum_{k=0}^{K}\left(M+1+\frac{\rho_0\zeta^k\left(2k\log\frac{\zeta}{\sigma}+\log\frac{\rho_0^2\alpha_0^2\tr0^2\left(\sqrt{\max\{1,\tL\delta^{-1}\}}+\tL\right)^2}{\eta_0^2}\right)_+}{\min\left\{1,\sqrt{\delta \tL^{-1}}\right\}}\right)\\
& \leq K+1+\wC_1\left((M+1)(K+1)+\frac{\rho_0\zeta^{K+1}\left(2K\log\frac{\zeta}{\sigma}+\log\frac{\rho_0^2\alpha_0^2\tr0^2\left(\sqrt{\max\{1,\tL\delta^{-1}\}}+\tL\right)^2}{\eta_0^2}\right)_+}{(\zeta-1)\min\left\{1,\sqrt{\delta \tL^{-1}}\right\}}\right) \le \widehat N,
\end{align*}
where the first inequality follows from \eqref{def-Nk} and $\mu=0$, the second inequality is due to $\sum^K_{k=0}\zeta^k \le \zeta^{K+1}/(\zeta-1)$  and $\sum^K_{k=0}k \zeta^k \le K\zeta^{K+1}/(\zeta-1)$, and the last equality follows from \eqref{C2}, \eqref{K} and \eqref{def-complexity}.

(iii) Suppose that $\mu>0$, namely, $f$ is strongly convex on $\dom(P)$. Similar to the proof of statement (ii) of this theorem, it suffices to show that $\sum_{k=0}^{|\bbK|-2}(N_k+1) \le \check N$. Indeed, in view of \eqref{C2}, \eqref{K}, \eqref{def-Nk}, $|\bbK|-2\leq K$, $\mu>0$, $\rho_k=\rho_0\zeta^k$ and $\eta_k=\eta_0\sigma^k$, one has
\begin{align*}
 \sum_{k=0}^{|\bbK|-2}(N_k+1) & \leq K+1+\wC_1 \sum_{k=0}^{K} \left(M+1+\frac{\sqrt{\frac{\rho_k}{\mu}}\left(\log\frac{\rho_k^2\alpha_0^2\tr0^2\left(\sqrt{\max\{1,\tL\delta^{-1}\}}+\tL\right)^2}{\eta_k^2}\right)_+}{\min\left\{1,\sqrt{\delta \tL^{-1}}\right\}}\right) \\
& = K+1+\wC_1\sum_{k=0}^{K}\left(M+1+\frac{\sqrt{\frac{\rho_0}{\mu}}\sqrt{\zeta}^k\left(2k\log\frac{\zeta}{\sigma}+\log\frac{\rho_0^2\alpha_0^2\tr0^2\left(\sqrt{\max\{1,\tL\delta^{-1}\}}+\tL\right)^2}{\eta_0^2}\right)_+}{\min\left\{1,\sqrt{\delta \tL^{-1}}\right\}}\right)\\
& \leq K+1+\wC_1\left((M+1)(K+1)+\frac{\sqrt{\frac{\rho_0}{\mu}}\sqrt{\zeta}^{K+1}\left(2K\log\frac{\zeta}{\sigma}+\log\frac{\rho_0^2\alpha_0^2\tr0^2\left(\sqrt{\max\{1,\tL\delta^{-1}\}}+\tL\right)^2}{\eta_0^2}\right)_+}{(\sqrt{\zeta}-1)\min\left\{1,\sqrt{\delta \tL^{-1}}\right\}}\right) \leq \check N,
\end{align*}
where the first inequality follows from \eqref{def-Nk} and $\mu>0$, the second inequality is due to $\sum^K_{k=0}\sqrt{\zeta}^k \le \sqrt{\zeta}^{K+1}/(\sqrt{\zeta}-1)$  and
$\sum^K_{k=0}k \sqrt{\zeta}^k \le K\sqrt{\zeta}^{K+1}/(\sqrt{\zeta}-1)$, and the last equality follows from \eqref{C2}, \eqref{K} and \eqref{strong-complexity}.
\end{proof}

\section{Concluding remarks}\label{sec:conclude}

The development and analysis of accelerated first-order methods in this paper are based on the assumption that the proximal subproblems associated with $P$ can be exactly solved. Nevertheless, it is not hard to modify them by using a suitable inexact solution of the proximal subproblems instead.

Recently, a class of problems in the form of \eqref{unc-prob} with $f$ being relatively smooth convex was considered in the literature (e.g., see \cite{bauschke2017descent,hanzely2021accelerated,lu2018relatively}). Interestingly, this class consists of some problems in which $\nabla f$ is not locally Lipschitz continuous on $\cl(\dom(P))$, for example, the problem with $P$ being the simplex and $f$ containing the entropy function and being relatively smooth to the entropy function. It shall however be mentioned that this class generally does not include the problems considered in this paper. For example, it does not contain problem \eqref{unc-prob} with $f$ being a convex high-degree  polynomial function and $P$ being the indicator function of the nonnegative orthant. Yet, this problem belongs to the class considered in this paper. As future research, it would be interesting to investigate whether the methods studied in this paper can be extended to relatively smooth convex optimization.


\end{document}